\newtheorem{theorem}{Theorem}
\newtheorem{lemma}{Lemma}[section]
\newtheorem{proposition}[lemma]{Proposition}
\numberwithin{equation}{section}
\renewcommand{\P}{\tilde P}
\newcommand{\R}{{\mathbb R }}
\newcommand{\Z}{{\mathbb Z }}
\renewcommand{\H}{{\mathcal H }}
\renewcommand{\S}{{\mathbb S }}
\newcommand{\p}{\partial}
\renewcommand{\aa}{\alpha}
\newcommand{\bb}{\beta}
\newcommand{\la}{\langle}
\newcommand{\ra}{\rangle}
\renewcommand{\la}{\langle}
\renewcommand{\ra}{\rangle}
\newcommand{\step}[2]{\noindent\textbf{Step #1:}(\emph{#2})}
\newcommand{\tphi}{\widetilde{\phi}}
\newcommand{\lp}[2]{\Vert \, #1 \, \Vert_{#2}}
\newcommand{\snabla}{ {\slash\!\!\!\!\nabla} }
\newcommand{\g}{\mathfrak  g}
\newcommand{\G}{\mathbf{G}}
\renewcommand{\P}{\mathbf{P}}
\newcommand{\sgn}{\text{sgn}}
\begin{document}
\title[The MKG equation]{ Global well-posedness for the Yang-Mills
 equation in $4+1$ dimensions. Small energy.}

\author{Joachim\ Krieger} \address{B\^{a}timent des Math\'ematiques,
  EPFL, Station 8, CH-1015 Lausanne, Switzerland}
\email{joachim.krieger@epfl.ch}

\author{Daniel\ Tataru} \address{Department of Mathematics, The
  University of California at Berkeley, Evans Hall, Berkeley, CA
  94720, U.S.A.}  \email{tataru@math.berkeley.edu}

\thanks{The first author was partially supported by the Swiss National
  Science Foundation. The second author was supported in part
  by the NSF grant DMS-1266182 as well as by a Simons Investigator
  grant from the Simons Foundation.}

\begin{abstract}
  We consider the hyperbolic Yang-Mills equation on
the Minkowski space  $\R^{4+1}$. Our main result asserts
that this problem  is globally well-posed for all initial data whose energy 
is sufficiently small.  This solves a longstanding open problem.
\end{abstract}

\maketitle


\section{Introduction}

Let $\G$ be a semisimple Lie group and $\g$ its associated Lie
algebra. We denote by $ad(X) Y = [X,Y]$ the Lie bracket on $\g$ and by
$\la X,Y \ra = tr(ad(X) ad(Y)) $ its associated nondegenerate Killing
form.  The action of $\G$ on $\g$ by conjugation is denoted by $Ad(O)X
= O X O^{-1}$.  We recall that the Killing form is invariant, in the sense that
\begin{equation*}
\la [X,Y],Z \ra = \la X, [Y,Z] \ra, \qquad X,Y,Z \in \g, 
\end{equation*}
or equivalently 
\begin{equation*}
\la X,Y \ra = \la Ad(O) X,  Ad(O) Y  \ra, \qquad X,Y \in \g, \quad O \in \G. 
\end{equation*}

Let $\R^{4+1}$ be the five dimensional Minkowski space equipped with
the standard Lorentzian metric $m= \text{diag}(-1,1,1,1,1)$.
Denote by $A_{\alpha}: \R^{4+1}\rightarrow \g$, $\alpha = 1\ldots,4$,
a connection form taking values in the Lie algebra $\g$, and by $D_\alpha$
the associated covariant differentiation,
\[
D_{\alpha} B:= \partial_{\alpha} B + [A_{\alpha},B],
\]
acting on $\g$ valued functions $B$.
Introducing the curvature tensor
\[
F_{\alpha\beta}: = \partial_{\alpha}A_{\beta}
- \partial_{\beta}A_{\alpha} +[A_\alpha,A_\beta],
\]
the {\it{Yang-Mills}} equations are the Euler-Lagrange
equations associated with the formal Lagrangian action functional
\[
\mathcal{L}(A_{\alpha}, \phi): = \frac{1}{2}\int_{\R^{4+1}} \la F_{\alpha\beta}, F^{\alpha\beta}\ra \,dxdt.
\]
Here we are using the standard convention for raising indices. Thus,
the Yang-Mills equations take the form
\begin{equation}\label{ym}
D^\alpha F_{\alpha \beta} = 0.
\end{equation}
There is a natural energy-momentum tensor associated to the Yang-Mills
equations, namely 
\[
T_{\alpha \beta} =    \frac12  m^{\gamma \delta} \la F_{\alpha \gamma}, F_{\delta \beta}\ra
- \frac14 m_{\alpha \beta} \la F_{\gamma \delta}, F^{\gamma \delta}\ra.
\]
If $A$ solves the Yang-Mills equations \eqref{ym} then $T_{\alpha \beta}$ is divergence free,
\begin{equation}\label{divT}
\partial^\alpha T_{\alpha \beta} = 0.
\end{equation}
Integrating this for $\beta = 0$ yields a conserved energy
\begin{equation}\label{energy}
E(A) = \int_{\R^4} T_{00}\, dx \approx \|F\|_{L^2}^2.
\end{equation}
 The case $\beta \neq 0$ yields further conservation laws, i.e. the momentum,
which play no role in the present article.

 The Yang-Mills equations also have a scale invariance property,
\[
A(t,x) \to \lambda A(\lambda t,\lambda x).
\]
The energy functional $E$ is invariant with respect to scaling precisely in dimension
$4+1$. For this reason we call the $4+1$ problem energy critical;  this is one of the
motivations for our interest in this problem.

In order to study the Yang-Mills equations as well-defined evolutions
in time we first need to address its gauge invariance. Precisely, the
equations \eqref{ym} are invariant under the gauge transformations
   \[
A_\alpha \longrightarrow O A_\alpha O^{-1} -\partial_\alpha O  O^{-1},
\]
with $O$  elements  of the corresponding   group $G$. In order to uniquely 
determine the solutions to the Yang-Mills equations we need to add an additional 
set of constraint equations which uniquely determine the gauge. This 
procedure is known as {\em gauge fixing}.

To motivate our choice we introduce the covariant wave operator
\[
\Box_{A}: = D^{\alpha}D_{\alpha}.
\]
Then we can write the Yang-Mills system in the following form
\begin{equation}\label{eq:ym}
\begin{split}
 \Box_A A_\beta = D^\alpha \partial_\beta A_\alpha = \partial_\beta \partial^\alpha A_\alpha +
[A^\alpha,\partial_\beta A_\alpha].
\end{split}
\end{equation}
Expanded out, the equations take the form
\[
\Box A_\bb - \p_\bb \p^\aa A_\aa + \p^\aa  [A_\aa,A_\bb] +
 [A^\aa,  \p_\aa A_\bb - \p_\bb A_\aa + [A_\aa,A_\bb]] = 0,
\]
or 
\[
\Box A_\bb + 2 [ A_\aa,\p^\aa A_\bb] = 
 \p_\bb \p^\aa A_\aa -[\p^\aa  A_\aa,A_\bb] 
 + [A^\aa,  \p_\bb A_\aa]  - [A^\aa,[A_\aa,A_\bb]]. 
\]

A natural condition which insures that the above system is strictly
hyperbolic is the Lorenz gauge, $\partial^\alpha A_\alpha =
0$. Unfortunately there are multiple technical difficulties if one
tries to implement such a gauge in the low regularity setting, see
e.g. \cite{2013arXiv1309.1977S}.  For this reason we will instead
impose the {\it{Coulomb Gauge condition}} which requires
\begin{equation}\label{eq:Coulomb}
  \sum_{j=1}^4\partial_j A_j = 0.
\end{equation}
We remark that a somewhat similar gauge is the {\em temporal gauge}, namely $A_0 = 0$.
Another choice which is likely better but more involved technically is the caloric gauge,
see e.g. \cite{Oh12}.

Returning to the Coulomb gauge, we can use it to view the equations as a 
nonlocal hyperbolic system for the spatial components $A_j$; precisely, they 
solve the system
\begin{equation*}
\Box_A A_j  = \p_j \p_t A_0  +  [A^\aa,  \p_j A_\aa].  
\end{equation*}
In order to eliminate the first term on the right and also to restrict the evolution 
to divergence free fields $A_j$ we apply the Leray projection $\P$, and rewrite  
the equation in the form
\begin{equation}\label{ym-cg}
\Box A_j  = \P\left(  [A^\aa,  \p_j A_\aa] - 2 [ A^\alpha, \partial_\alpha A_j] 
- [ \partial_0 A_0, A_j]  - [A^\alpha,[A_\alpha,A_j]] \right).  
\end{equation}
The nonlocality is due to the $A_0$ component, which solves an elliptic 
equation at fixed time, namely
\begin{equation}\label{a0}
\Delta_A A_0 = [A_j,\partial_0 A_j].
\end{equation}
The time derivative of $A_0$ also appears in the $A_j$ system,
so it is useful to derive an equation for it as well. This has the form
\begin{equation}\label{dta0}
\Delta \partial_0 A_0 = \partial_0 \partial_j [ A_0,A_j].
\end{equation}

To summarize, in the Coulomb gauge, the Yang-Mills system can be cast in the following expanded out form:
\[
\begin{split}
\Box A_i +   2 [ A_\aa,\p^\aa A_i]  = & \ \p_i \p_t A_0 + [\p_0 A_0, A_i]
+  [A^\aa,  \p_i A_\aa] -  [A^\aa,[A_\aa,A_i]],
\\
\Delta A_0 + 2 [A_i,\p_i A_0]  =& \  [A_i,\p_0 A_i] - [A_i,[A_i,A_0]]. 
\end{split}
\]

We will consider the solvability question for the system \eqref{ym-cg}
in the class of divergence free vector fields, with initial data at time $t = 0$,
\begin{equation}\label{data}
(A_j(0), \partial_0 A_j(0)) = (A_{0j}, A_{1j}) \in \H:= \dot H^1(\R^4) \times L^2(\R^4).
\end{equation}
We will also consider higher regularity properties of the solutions, using the
spaces 
\[
 \H^N:= (\dot H^N(\R^4)  \cap \dot H^1(\R^4) ) \times  H^{N-1}(\R^4) , \qquad N \geq 1
\]

Here the dependent variables $A_0$, $\partial_0 A_0$ are determined by the linear equations
\eqref{a0}, \eqref{dta0}. We remark that the solvability for these
equations in various spaces, including $\dot H^1 \times L^2$ at fixed
time, is considered in Section~\ref{s:elliptic}.

In order to study the dependence of the solutions on the 
initial data we will also need  the linearized Yang-Mills equation,
\begin{equation}\label{ym-cg-lin} \begin{split}
\Box B_j  = \ &  \P\left(  [A^\aa,  \p_j B_\aa] - 2 [ A^\alpha, \partial_\alpha B_j] 
- 2 [ B^\alpha, \partial_\alpha A_j]
- [ \partial_0 A_0, B_j] - [ \partial_0 B_0, A_j] \right. \\ \ & \left. - 2[B^\alpha,[A_\alpha,A_j]] - 
 [A^\alpha,[A_\alpha,B_j]] \right)  
\end{split}
\end{equation}
with appropriate linear elliptic equations for $B_0$, $\partial_0 B_0$,
\begin{align}\label{b0}
\Delta_A B_0 = & \ [B_j,\partial_0 A_j] + [A_j,\partial_0 B_j] + 2 [B_j,\partial_j A_0] + 
2 [B_j,[B_j,A_0]],
\\
\label{dtb0}
\Delta \partial_0 B_0 = & \  \partial_0 \partial_j( [ B_0,A_j]+[ A_0,B_j]).
\end{align}
For the linearized equation we will go below scaling in regularity, and use the spaces 
\[
\dot \H^s = \dot H^s(\R^4)  \times \dot H^{s-1}(\R^4) ,
\]
with $s < 1$ but close to $1$. Now we can state our main result:

\begin{theorem}\label{t:main}
The Yang-Mills system in Coulomb gauge \eqref{ym-cg}-\eqref{a0}-\eqref{dta0}
is globally well-posed in $\H$  for  initial data which is small 
in $\H$, in the following sense:

(i) (Regular data) If in addition the data $(A_{0j}, A_{1j})$ is more
regular, $(A_{0j}, A_{1j}) \in \H^N$, then there exists a unique
global regular solution $(A_j,\partial_0 A_j) \in C(\R, \H^N)$, which
has a Lipschitz dependence on the initial data locally in time in the
$\H^N$ topology.

(ii) (Rough data) The flow map admits an extension
\[
\H \ni (A_{j0}, A_{j1}) \to (A_j, \partial_t A_j) \in C(\R, \H)
\]
within the class of initial data which is small in $\H$, and which is continuous
in the $\H \cap \dot \H^s$ topology for $s <1$ and close to $1$.

(iii) (Weak Lipschitz dependence) The flow map is globally Lipschitz in the 
$\dot \H^s$ topology for $s < 1$, close to $1$.
\end{theorem}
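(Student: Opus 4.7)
The plan is to adopt the now-standard three-layer framework developed for energy-critical gauge-field equations (Klainerman--Machedon, Tao, Krieger--Sterbenz--Tataru for MKG): (a) design a frequency-localized function space $S^1 = \bigoplus_k S_k$ adapted to the wave operator $\Box$, together with a companion nonlinearity space $N = \bigoplus_k N_k$, such that $\Box : S^1 \to N$ is an isomorphism for free waves and captures the Strichartz, $X^{s,b}$, and null-frame structure needed in 4+1 dimensions; (b) establish a multilinear algebra of bilinear and trilinear estimates that bound every term on the right-hand side of \eqref{ym-cg} and \eqref{ym-cg-lin} in $N$, exploiting both the Coulomb structure and the null form $\mathcal Q_{\alpha\beta}(B,C) = \partial_\alpha B \cdot \partial_\beta C - \partial_\beta B \cdot \partial_\alpha C$ hidden in the projected quadratic term $\P([A^\alpha,\partial_j A_\alpha] - 2[A^\alpha,\partial_\alpha A_j])$; (c) bootstrap a smallness bound $\|A\|_{S^1} \lesssim \|(A_{0j},A_{1j})\|_{\mathcal H}$ along the continuity argument, using persistence of regularity in $\mathcal H^N$ for smooth data.

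For the semilinear part I would first handle the elliptic equations \eqref{a0}, \eqref{dta0} in the fixed-time spaces $\dot H^1$ and $\dot H^0$ via a parametrix for $\Delta_A$ (invoking the section on elliptic theory advertised in the paper), obtaining gains that turn $A_0$ and $\partial_0 A_0$ into lower-order contributions. The cubic term $[A^\alpha,[A_\alpha,A_j]]$ is handled by plain trilinear $L^\infty_t \dot H^{-1}$ bounds, while the quadratic terms that survive after Leray projection are treated by the standard null-form machinery: frequency/angle decomposition, the transversality gain for high$\times$high$\to$low interactions, and the use of null-frame $L^2_{t_\omega} L^\infty_{x_\omega}$ norms for the high$\times$low output frequency cascade. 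The Coulomb condition $\partial^j A_j = 0$ is used crucially here: one rewrites $[A^j,\partial_j A_\beta] = \partial_j[A^j,A_\beta]$, exposing a genuine null form against the remaining transverse factors, in direct parallel with the MKG analysis.

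The hard part, as always at energy-critical regularity, is the quasilinear magnetic interaction $2[A_\alpha,\partial^\alpha A_j]$, which has no purely multilinear bound in $N$. The strategy is a paradifferential renormalization: replace $\Box$ by the covariant box $\Box_{\underline A_{\bullet \ll k}}$ associated with the $A$-frequencies far below the output frequency $k$, construct a parametrix for this magnetic wave operator via a pseudodifferential conjugation $e^{i\Psi}$ (the microlocal gauge transformation introduced by Tao and refined by Rodnianski--Tao for wave maps and by Krieger--Sterbenz--Tataru for MKG), and show that this parametrix maps $N_k \to S_k$ with bounds controlled by $\|A\|_{S^1}^2$. The phase $\Psi$ is essentially the half-wave transport $(\partial_t \pm |D|)^{-1}$ of a symbol built from $\underline A_{\bullet\ll k}$; verifying the disposability, conjugation, and square-summability estimates for this parametrix is the principal technical obstacle, and it must be consistent with the $ad$-action of $\mathfrak g$ rather than scalar multiplication, forcing one to work with matrix-valued Fourier integral operators.

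Once the paradifferential parametrix is in hand, the continuity argument closes in $S^1$ for smooth data, giving assertion (i). For (ii), I would run the same scheme for the linearized system \eqref{ym-cg-lin}--\eqref{dtb0} in the weaker space $\dot{\mathcal H}^s$, $s<1$ near $1$, obtaining global Lipschitz dependence of the flow on $\dot{\mathcal H}^s$-data; together with persistence of regularity and density of smooth data in $\mathcal H$, this yields continuous extension of the flow map to all small $\mathcal H$-data, with continuity measured in $\mathcal H \cap \dot{\mathcal H}^s$, giving (iii) and (ii). The one subtlety is that the linearized equation inherits its own quasilinear magnetic term $2[A^\alpha,\partial_\alpha B_j]$, so the linearized parametrix uses the same renormalization as the nonlinear one, and all estimates must be proven uniformly in $s$ in a small neighborhood of $1$.
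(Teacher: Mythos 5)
Your outline correctly identifies the overall architecture—paradifferential splitting, a magnetic-wave parametrix built from a microlocal gauge transformation conjugated by $Ad$, bilinear null form estimates driven by the Coulomb condition, continuity/bootstrap in $S^1$, and a weaker $\dot{\mathcal H}^s$ Lipschitz bound to extend to rough data. This is essentially the route the paper takes. However, the proposal has two genuine gaps that would prevent it from closing.

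First, you assert that the perturbative quadratic terms surviving Leray projection can be handled by ``the standard null-form machinery'' (angular/modulation decomposition, high$\times$high$\to$low transversality, null-frame $L^2_{t_\omega}L^\infty_{x_\omega}$ norms), and that the elliptic $A_0$, $\partial_0 A_0$ contributions become ``lower-order.'' Neither claim holds at this regularity. The paper isolates the component $\mathcal H^*[A_{\alpha,<k},\partial^\alpha A_k]$ where both the high-frequency input and the output sit at high modulation in a mismatched angular regime; a direct bilinear $S^1\times S^1\to N$ bound for this piece fails (the exponent $\delta$ in the core null form estimate is too small), and the paper circumvents it only by reiterating the equation, invoking the $Z$-norm, and exposing a \emph{second} trilinear null structure that ties the $\Box^{-1}[A_j,\partial_\alpha A_j]$ and $\Delta^{-1}[A_j,\partial_0 A_j]$ contributions together before cancellation. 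Precisely because of this coupling, $A_0$ is \emph{not} a lower-order term: the paper stresses that $A_0$ cannot be treated as directly perturbative, and its leading contribution is the part of the trilinear cancellation that lives near the hyperplane $\eta_\alpha\xi^\alpha=0$ where the renormalization symbol loses all traction. Your plan does not account for any of this, and without the reiteration/second-null-form step the perturbative estimate does not close.

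Second, the parametrix ansatz $e^{i\Psi}$ does not generalize cleanly to the noncommutative setting, and the remark that one must ``work with matrix-valued FIOs'' does not fill that gap. The paper replaces exponentiation by a frequency-flow ODE on the group, $\frac{d}{dk}O_{<k}O_{<k}^{-1}=\Psi_k$ (a continuous version of the discretized gauge construction for wave maps), together with a decomposition of $A_x$ into a ``main'' part feeding the renormalization and a ``perturbative'' part excised near small angles with frequency-dependent threshold $2^{\delta h}$; only the main part enters $\Psi$. Without this frequency-integrated group-valued construction (and the corresponding symbol bounds for $O_{;x}$, $O_{;\xi}$, the decomposability estimates, and the high-frequency tail control), the conjugation identities and error estimates you need for $\Box_A$ do not follow from a scalar-like $e^{i\Psi}$ calculus.
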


To clarify, in part (ii) the $\H \cap \dot \H^s$ norm is applied to
differences of solutions.  In particular, we remark that $\H^N$ is
dense in $\H$ in this topology, so this extension yields solutions for all small
data in $\H$. The $\dot \H^s$ norm plays an essential role here, as
this is the norm where we have Lipschitz dependence of the solutions
on the initial data.  If we limit ourselves to just the $\H$ topology,
then the best we can prove is a local in time continuous dependence on
data; thus, the scattering information is lost.

We remark that in effect the proof of the theorem provides a stronger
statement, where the regularity of the solutions is described in terms
of function spaces $S^1$, $S^N$ which incorporate both Strichartz norms,
$X^{s,b}$ norms and null frame spaces.  For convenience, the stronger
result is stated later in Theorem~\ref{t:main-sn}. 

Implicit in Theorem ~\ref{t:main-sn} is also a scattering result;
however, this is not so easy to state as it is a modified rather than
linear scattering. In a weaker sense, one can think of scattering as
simply the fact that the $S^1$ norm is finite.

\subsection{Brief historical remarks}
The Yang-Mills equation belongs to the larger class of geometric 
nonlinear wave equations, which includes other problems such as 
Wave-Maps and the (mass-less) Maxwell-Klein-Gordon system. These problems
have a number of shared features, including the gauge structure,
and the null condition. Also, in all these problems the nonlinearity 
is nonperturbative at critical scaling, though only mildly so, more 
precisely in a way which can be addressed via renormalization. 
For these reasons, the understanding of these problems has evolved in a 
related fashion, and, as we describe below, our work on Yang-Mills was strongly
influenced by prior developments for  both Wave-Maps and Maxwell-Klein-Gordon. 

For the Yang-Mills equation, a first global regularity result on a
Minkowski background in the physical dimension $n = 3$ was first
established for large data in classical work by Eardley-Moncrief,
\cite{MR649158}, \cite{MR649159}, after earlier work by Choquet-Bruhat
and Christodoulou had proved a small data global existence result in
\cite{ChoChr}. The physical $n = 3$ case is energy subcritical, which
makes this problem easier from the point of view of global existence
than the critical case $n = 4$, but harder from the point of view of
understanding scattering.  

The Eardley-Moncrief result was revisited and significantly
strengthened by Klainerman-Machedon \cite{Klainerman:1995}. In fact,
these authors showed local (and thence global) well-posedness in
$H^1$. This work proved important for future developments on account
of the fact that it identified the null-structure and its use via
bilinear null-form estimates, which is also of paramount importance in
this work.  The energy critical case $n = 4$ of the Yang-Mills system
was first attacked in Klainerman-Tataru~\cite{Klainerman:1999do}; more
precisely, a model system with similar null-structures was considered
there, and almost optimal local well-posedness (in light of the
scaling of the system) was shown.  Somewhat later, Machedon-Sterbenz
\cite{Machedon:2004cu} revisited the closely related subcritical
Maxwell-Klein Gordon system in $3+1$ dimensions, and exploiting a deep
trilinear null-structure in the system, managed to push local
well-posedness all the way to an almost optimal
$H^{\frac{1}{2}+\epsilon}$-result (optimal in light of scaling). The
new null-structure used there will also be of fundamental importance
for our work.

Further work on the
Maxwell-Klein-Gordon and Yang-Mills equation followed in the wake of
important progress on the Wave Maps equation by the second author in
\cite{MR1827277,wm} as well as by Tao in \cite{Tao:2001gb}. These  works
introduced the functional framework that will be crucial for the
present paper.  In \cite{MR2100060}, Rodnianski and Tao established an
optimal small data global existence result at the scaling invariant
level for high-dimensional Maxwell-Klein-Gordon in the Coulomb
Gauge. The important innovation there was the use of an approximate
parametrix for a magnetic potential wave equation to deal with certain
bad interaction terms which could not be handled perturbatively. By
refining this and working with more sophisticated Banach spaces coming
from the theory of Wave Maps, the authors jointly with J. Sterbenz
pushed this to the energy critical case in $n = 4$ dimensions in
\cite{MKG}.

The present paper will borrow quite heavily from \cite{MKG}, and in
fact be built directly on the spaces and null-form estimates
established there.  However, the geometry for the Yang-Mills system is
significantly more complicated than for the Maxwell-Klein-Gordon
system, as the field $A$ no longer 'essentially behaves like a free
wave'. An adaptation of the method of \cite{MR2100060} to global
regularity for small critical data of high dimensional ($n\geq 6$)
Yang-Mills was accomplished in
Krieger-Sterbenz~\cite{Krieger:2005wh}. In the present paper we use an
approximate parametrix of the same type as in \cite{Krieger:2005wh}.
However, in its construction we take advantage of  the better
functional framework in \cite{MKG}, as well as of better connection integration
techniques borrowed from Wave-Maps~\cite{wm}.

The small data result in the present paper can also be viewed as a
stepping stone toward the corresponding large data problem, which is
still open. The large data problem is better understood for the
Wave-Map equation, where the so-called Threshold Conjecture was
recently proved by Sterbenz-Tataru~\cite{MR2657817,MR2657818}, \cite{}
and also, independently, by Krieger-Schlag~\cite{Krieger:2009uy} and
Tao~\cite{Tao:2008wn,Tao:2008tz,Tao:2008wo,Tao:2009ta,Tao:2009ua} for
special target manifolds. More recently, large data well-posedness was
also established for the Maxwell-Klein-Gordon system, independently in
Oh-Tataru~\cite{OT1,OT2,OT3} and Krieger-Luhrman~\cite{KL}.
 
In related developments, one should also note the work of
Bejenaru-Herr~\cite{BH},\cite{BH1} on the closely related cubic Dirac
equation, as well as the massive Dirac-Klein-Gordon system.

\subsection{Ingredients of the proof}

The present paper is built directly on the predecessor paper
\cite{MKG}.  The nonlinearity is split into two parts, a perturbative
one and a non-perturbative paradifferential type component. As in
\cite{MKG}, even the ``perturbative'' part cannot directly estimated
in full. Instead, there is a portion of it that requires reiteration
of the equation and the use of the second null condition.  The
nonperturbative part is then eliminated via a paradifferential gauge
renormalization.

The main novelty here then concerns the approximate parametrix
construction for the magnetic potential wave equation
\eqref{parab-noP}, which is considerably more difficult in the present
noncommutative setting. We use an ansatz \eqref{eq:Bparametrix} as in
\cite{Krieger:2005wh}, but construct the phase shift $O(t, x, \xi)$
via a continuous version of the 'discretized' (over frequency blocks)
Gauge construction in \cite{Tao:2001gb}, see \eqref{psi-def}. Such a
construction was first introduced in \cite{wm}, and proved its
usefulness further in \cite{MR2657817}. The fact that the angular
separation in the definition of the $\Psi_k$ can be chosen as
$2^{\delta k}$ with $\delta>0$ arbitrarily small simplifies the
arguments for the control of the parametrix in section 7 compared to
the arguments in \cite{Krieger:2005wh}.


\subsection{Notation and Conventions}
We use the notation $A \lesssim B$ to mean $A \leq CB$ for some
universal constant $C > 0$.  We write $A \ll B$ if the implicit
constant should be regarded as small.

Our convention regarding indices is as follows. The greek indices
$\alpha, \beta$ run over $0, \ldots, 4$, whereas the latin
indices $i, j$ only run over the spatial indices $1, \ldots,
4$. We raise and lower indices using the Minkowski metric, and sum
over repeated upper and lower indices. The indices $k,h,l$ are reserved 
for dyadic frequencies.

For the space-time Fourier variables we will use $(\tau,\xi)$ or
$(\sigma,\eta)$. On occasion we set $\tau = \xi_0$, or $\sigma = \eta_0$;
we will do this only to keep the notations simple where there is covariant
summation with respect to indices $\alpha$, $\beta$.

\subsubsection*{Littlewood-Paley projections}

We denote by $P_k = P_k(D_x)$ the standard spatial Littlewood Paley projections,
where $k$ is a dyadic index. We allow $k$ to be either discrete (integer)
or continuous. We also use the notations $P_{<k}$, $P_{>k}$ for projections
selecting lower or higher frequencies. 

On occasion we will also need space-time  Littlewood Paley projections.
These are denoted by $S_k:= S_k(D_{x,t})$, $S_{<k}$, $S_{>k}$.

We also define modulation Littlewood Paley projections,
$Q_j:=Q_j(|D_t|-|D_x|)$. Sometimes we will restrict these to positive
or negative time frequencies, $Q^{\pm}_{j} := Q^{\pm} Q_{j}$, where
$Q_{\pm} := \mathcal{F}^{-1}[ 1_{[0, \infty)}(\pm \tau) \mathcal{F}[\varphi]]$
restricts to the $\pm$ frequency half-space.

\subsubsection*{Frequency envelopes}
For some more accurate bounds at various places we need to keep better
track of the dyadic frequency distribution of norms. This is done using the
language of frequency envelopes. An \emph{admissible frequency
  envelope} will be any sequence $\{c_k\}_{k \in \mathbb Z}$ of
positive numbers which is slowly varying upwards,
\[
2^{-C_0(j-k)} \leq c_j /c_k \leq 2^{\delta_{0} (j-k)}, \qquad j > k,
\]
with a large universal constant $C_0$ and a small universal constant
$\delta_{0}$. Given such a sequence and a norm $X$, we define the norm
\[
\|\phi\|_{X_c} = \sup_k c_k^{-1} \| P_{k} \phi\|_{X}.
\]
We say that $c$ is a frequency envelope for the data $A_{x}[0]$ if for
every $k \in \mathbb{Z}$, we have
\begin{equation*}
  \|(P_{k}A_{x}[0], P_{k}\phi[0])\|_{\H} \leq c_{k}.
\end{equation*}
Given any $A_{x}[0], \phi[0] \in \dot{H}^{1} \times L^{2}$, we may
construct such a $c$ by
\begin{equation*}
  c_{k} := \sum_{k'>k} 2^{-\delta_{0}|k - k'|} \| P_{k'}A_{x}\|_{\H} \ 
+  \sum_{k'\leq k} 2^{-C_{0}|k - k'|} \| P_{k'}A_{x}\|_{\H}.
\end{equation*}
By Young's inequality, we have $\|c\|_{\ell^{2}} \lesssim
\|A_{x}[0]\|_{\dot{H}^{1} \times L^{2}} $.

\subsubsection*{Lie group and algebra notations}

We use the notation $ad(A)B = [A,B]$ for the Lie bracket on $\g$, and its
interpretation as a representation of $\g$ as a subspace of $Aut(\g)$.
The Killing form
\[
\la A,B\ra = tr(ad(A)ad(B))  
\]
is nondegenerate if $\G$ is semisimple, and (with a possible sign adjustment)
can be used as an invariant inner product on $\g$. It also has the invariance property
\[ 
\la [A,B],C\ra = \la A,[B,C]\ra.
\]
The action of $\G$ on $g$ is denoted by $Ad(O) A = OAO^{-1}$. This
preserves Lie brackets and the Killing form. 

We also need to work with $\G$ valued functions and symbols $O(t,x,\xi)$.
To differentiate $O$ we introduce the notations  
\[
O_{;x} = \partial_xO O^{-1}, \qquad O_{;\xi} = \p_{\xi} O O^{-1}, etc.
\]
These are all well defined elements of the Lie algebra $\g$. Furthermore,
for any two such derivatives we have the commutation relation
\begin{equation}\label{comm}
\partial_k O_{;l} - \partial_l O_{;k} = [O_{;k},O_{;l}].
\end{equation}

Now we introduce the corresponding classes of pseudodifferential operators acting 
on Lie algebra valued functions. We begin with Lie algebra valued symbols 
$\Psi(x,\xi)$, where for $\g$ valued functions $B$ we use the Lie bracket to define 
using the left calculus
\begin{equation}\label{opad}
Op (ad(\Psi))(x,D) B(x) =  \int e^{i(x-y)\xi} [\Psi(x,\xi),B(y)]  dy d\xi.
\end{equation}
We note that its  $L^2$ adjoint (with respect to the Killing form duality) is 
$-  Op (ad(\Psi))(D,y)$, 

Similarly for a $\G$ valued symbol $O$ we define
 \begin{equation}\label{opAd}
Op(Ad (O))(x,D) B(x) =  
\int e^{i(x-y)\xi} O(x,\xi) B(y) 
 O^{-1}(x,\xi) dy d\xi.
\end{equation}
Its  $L^2$ adjoint (with respect to the Killing form duality) is 
$  Op (Ad(O^{-1}))(D,y)$.

\subsection{Structure of the paper}

Our paper is organized as follows:
 
In Section~\ref{s:elliptic}, we begin with some elliptic gauge related
fixed time estimates. In particular these will help us relate the full
nonlinear gauge independent energy with the linear energy associated
to the MKG-CG system. We also consider similar issues 
for the linearized equation.

In the following section we switch to space-time analysis, and define
the function spaces $S^1$ and $N$; with minor changes this follows
\cite{MKG}. We also recall some useful estimates from
\cite{MKG}, and add to that some additional properties from \cite{OT2},
related to the interval decomposition of the $S^1$ and $N$ spaces.

In Section~\ref{s:proof} we use the $S^1$ norms to provide a stronger
form of our main theorem, and we show that this follows from three
estimates in Propositions~\ref{p:pert}, ~\ref{p:para} and
~\ref{p:linearize}.

Section~\ref{s:pert} contains the perturbative part of our analysis,
which primarily consists of bilinear estimates in $S^1$ and $N$
spaces.  There we prove Proposition~\ref{p:pert}, as well as
Proposition~\ref{p:linearize} (the latter modulo Lemma~\ref{l:tri},
which captures the trilinear structure governed by the second null
condition, and whose proof is relegated to the next to last section).

The bulk of the paper is devoted to the construction of a  parametrix for the 
paradifferential equation \eqref{para}, which is the main step in the proof of the 
remaining Proposition~\ref{p:para}.

We begin in Section~\ref{s:para} with some heuristic considerations, followed by
the rigorous definition of the parametrix and by Theorem~\ref{t:para}, which summarizes
its properties. This suffices for the proof of  Proposition~\ref{p:para}.
In Section~\ref{s:decomp} we review the notion of decomposability, and establish
a number of bounds for the symbols $\Psi$ and $O$ arising in the definition of the 
parametrix.  The symbol bounds are then used in Section~\ref{s:l2} to derive kernel
bounds, and a number of $L^2$ estimates, concluding with the proof of the 
first three parametrix bounds in Theorem~\ref{t:para}, as well as  the Strichartz and null frame bounds 
for the renormalization operators in our parametrix.   Section~\ref{s:error} contains the proof of the 
error estimates in Theorem~\ref{t:para},  modulo Lemma~\ref{l:ttri}.
The two estimates that require a fine trilinear analysis, namely  Lemma~\ref{l:ttri} 
and Lemma~\ref{l:tri}, are proved in Section~\ref{s:tri}.

\section{ Elliptic $L^2$ bounds}
\label{s:elliptic}

Here for convenience we show that any small energy data admits a
Coulomb representation which is small in $\H$. We also show that the
equations \eqref{a0}-\eqref{dta0} are well-posed; this justifies the
fact that the initial data in the Coulomb gauge is fully determined by
$(A_j(0),\partial_t A_j(0))$ (at least at small energies).

\begin{proposition}\label{p:data-cg}
 a)  Let $(\tilde A_\alpha(0), \partial_t \tilde A_j(0)) \in \dot H^1
  \times L^2$ be an initial data for the Yang-Mills equation with
  energy $E$. If $E$ is small enough then there exists an unique gauge
  equivalent Coulomb data with 
\begin{equation}
\| (A_j(0),\partial_t A_j(0))\|_{\H}^2 \approx E 
\end{equation}

b) For any Coulomb data $ (A_j(0),\partial_t A_j(0))$ which is small in $\H$
there exists a unique solution $(A_0(0),\partial_t A_0(0)) \in \H$ to \eqref{a0}-\eqref{dta0}
so that 
\begin{equation}\label{en-a0}
 \| (A_0(0),\partial_t A_0(0))\|_{\H}^2 \lesssim E^2
\end{equation}

c) If in addition we have $(A_j(0),\partial_t A_j(0)) \in \H^N$ then we also have 
$(A_0(0),\partial_t A_0(0)) \in \H^N$ and 
\begin{equation}
 \| (A_0(0),\partial_t A_0(0))\|_{\H^N}^2 \lesssim E \| (A_j(0),\partial_t A_j(0))\|_{\H^N}^2
\end{equation}
\end{proposition}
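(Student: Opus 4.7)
The plan is to prove all three parts by elliptic fixed-point arguments on $\R^4$, leveraging the critical Sobolev embedding $\dot H^1(\R^4) \hookrightarrow L^4$ together with the smallness of $E$.

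For part~(a), I would parametrize the spatial gauge by $O = \exp \psi$ and use the Baker--Campbell--Hausdorff formula to turn $\partial^j A_j(0) = 0$, with $A_j = O \tilde A_j O^{-1} - \partial_j O O^{-1}$, into a nonlinear elliptic equation
\[
\Delta \psi = \partial^j \tilde A_j + F(\psi, \tilde A),
\]
where $F$ is at least bilinear. Since the product estimate $\dot H^1 \cdot \dot H^1 \hookrightarrow L^2 \hookrightarrow \dot H^{-1}$ holds in $\R^4$ and $\|\partial^j \tilde A_j\|_{\dot H^{-1}} \lesssim \sqrt E$, a contraction in a small ball of $\dot H^2$ produces a unique $\psi$ with $\|\psi\|_{\dot H^2} \lesssim \sqrt E$, and gauge invariance of the energy yields $\|A_j(0)\|_{\dot H^1}^2 \approx E$. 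For $\partial_t A_j(0)$, I would next solve an auxiliary linear elliptic equation for $O_{;t} := \partial_t O\, O^{-1}|_{t = 0}$ derived from the condition $\partial^j \partial_t A_j(0) = 0$ (required for the Leray-projected evolution \eqref{ym-cg} to propagate the Coulomb gauge); it has the form $\Delta O_{;t} - [A_j, \partial^j O_{;t}] = \partial^j(O \partial_t \tilde A_j O^{-1})$ and is solved by a further perturbation of $\Delta$.

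For part~(b), expanding the covariant Laplacian in Coulomb gauge rewrites \eqref{a0} as
\[
\Delta A_0 + 2[A^j, \partial_j A_0] + [A^j, [A_j, A_0]] = [A_j, \partial_t A_j],
\]
whose right-hand side sits in $L^{4/3} \hookrightarrow \dot H^{-1}$ with norm $\lesssim E$ (by H\"older, $L^4 \cdot L^2$) while the lower-order terms on the left act as a small perturbation of $\Delta$ from $\dot H^1$ to $\dot H^{-1}$; the Banach fixed point theorem then gives $\|A_0\|_{\dot H^1} \lesssim E$. The analogous expansion of \eqref{dta0} yields $\Delta \partial_t A_0 - \partial_j[\partial_t A_0, A_j] = \partial_j[A_0, \partial_t A_j]$, with $[A_0, \partial_t A_j] \in L^{4/3}$; the same scheme produces $\|\partial_t A_0\|_{L^2} \lesssim \|A_0\|_{L^4}\|\partial_t A_j\|_{L^2} \lesssim E^{3/2}$, which comfortably implies the stated $E^2$ bound.

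For part~(c), I would bootstrap these two elliptic systems at higher regularity using a Moser-type product estimate of the form
\[
\|FG\|_{H^{N-1}} \lesssim \|F\|_{L^4}\|G\|_{H^{N-1}} + \|F\|_{H^N}\|G\|_{L^4},
\]
whose paraproduct form always places one factor at the critical energy level (contributing a factor $\sqrt E$) and the other at top regularity $\H^N$. Substituted into the expanded \eqref{a0}--\eqref{dta0}, this yields the asymmetric bound $\|(A_0, \partial_t A_0)\|_{\H^N} \lesssim \sqrt E \cdot \|(A_j, \partial_t A_j)\|_{\H^N}$, which squares to the claimed inequality. The main obstacle throughout is the borderline nature of $\dot H^1(\R^4)$: since it does not embed into $L^\infty$, the connection coefficient $A_j$ can never be absorbed via a pointwise bound, so every nonlinear product must be factored through $L^4$, with smallness of $E$ providing the contraction constant at each iteration.
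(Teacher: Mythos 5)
Your overall strategy—elliptic fixed-point arguments powered by the critical embedding $\dot H^1(\R^4)\hookrightarrow L^4$—is exactly what the paper alludes to: the authors cite Krieger--Sterbenz for part~(a) and describe parts~(b)/(c) as "a consequence of Sobolev embeddings and a simple fixed point argument." Your part~(b) argument is correct in detail. Two points in your sketch, however, have genuine gaps.

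For part~(a), the contraction you set up for $\psi$ lives in $\dot H^2(\R^4)$, which is the \emph{critical} space in four dimensions: it embeds into $BMO$ but not $L^\infty$. Consequently the exponential $O=\exp\psi$ is not controlled pointwise, and the BCH expansion of $\partial_j O\,O^{-1}$ and $O\tilde A_j O^{-1}$ does not converge in any naive norm. The way around this is to observe that only $Ad(O)$ (which is an isometry of $\g$ for the Killing form) and the Lie-algebra-valued derivative $O_{;j}=\partial_j O\,O^{-1}$ actually appear in the gauge-transformed connection, and both of these are bounded in terms of $\|\psi\|_{\dot H^2}$ without any $L^\infty$ control on $\psi$ itself; alternatively one works directly with the one-form $O_{;x}$. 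This is precisely the delicacy that leads the paper to defer to the Krieger--Sterbenz reference.

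For part~(c), the Moser estimate you write down,
\[
\|FG\|_{H^{N-1}}\lesssim\|F\|_{L^4}\|G\|_{H^{N-1}}+\|F\|_{H^N}\|G\|_{L^4},
\]
does not deliver the claimed $\sqrt E$ gain when applied to the source term $[A_j,\partial_t A_j]$: the only smallness available for the time-derivative factor is $\|\partial_t A_j\|_{L^2}\lesssim\sqrt E$, whereas $\|\partial_t A_j\|_{L^4}\sim\|\partial_t A_j\|_{\dot H^1}$ is \emph{not} small, so the second term on your right-hand side is not $\sqrt E\cdot\|(A_j,\partial_t A_j)\|_{\H^N}$. The correct asymmetric bilinear bound targets the weaker regularity $\dot H^{N-2}$ (which is what $\Delta A_0\in\dot H^{N-2}$ actually requires) and trades the $L^4$ factor for $L^2$, schematically
\[
\|FG\|_{\dot H^{N-2}}\lesssim\|F\|_{L^4}\|G\|_{\dot H^{N-1}}+\|F\|_{\dot H^N}\|G\|_{L^2}.
\]
The second term now inherits the $\sqrt E$ from $\|\partial_t A_j\|_{L^2}$, and in the paraproduct proof the $L^\infty$ control of the low-frequency piece of $\partial_t A_j$ is supplied by Bernstein, $\|P_{<k}\partial_t A_j\|_{L^\infty}\lesssim 2^{2k}\|\partial_t A_j\|_{L^2}$, with the factor $2^{2k}$ absorbed by the gap between $\dot H^{N-2}$ on the output and $\dot H^N$ on the high-frequency factor. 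With that correction your bootstrap goes through; the analogous estimate for $[A_0,A_j]$ in the $\partial_t A_0$ equation is easier because $\|A_0\|_{\dot H^1}\lesssim E$ is already small from part~(b).
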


\begin{proof} 

  The first part is proved (in $n\geq 6$ dimensions, but equally valid
  in lower ones) for example in \cite{Krieger:2005wh}. The second part
  is a consequence of Sobolev embeddings and a simple fixed point
  argument.

\end{proof}

We also consider the counterpart of part (b) for the linearized equation
\eqref{ym-cg-lin}. We have:

\begin{proposition}\label{p:data-cg-lin}
  Let $(A_j(0), \partial_t A_j(0)) \in \H$ be a
  Coulomb initial data for the Yang-Mills equation with small energy
  $E$.  Let $\frac12 < s < 1$ and $(B_j(0), \partial_t B_j(0)) \in
  \dot \H^s$ be a Coulomb initial data for the
  linearized Yang-Mills equation \eqref{ym-cg-lin}.  Then there exists
  a unique solution $(B_0(0),\partial_t B_0(0)) \in \dot \H^s$ to \eqref{b0}-\eqref{dtb0} so that
\begin{equation}
 \| (B_0(0),\partial_t B_0(0))\|_{ \dot \H^s}^2 
\lesssim  E  \| (B_j(0),\partial_t B_j(0))\|_{ \dot \H^s}^2
\end{equation}
\end{proposition}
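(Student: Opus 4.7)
The plan is to treat both \eqref{b0} and \eqref{dtb0} as small perturbations of the flat Laplacian on $\R^4$ and solve them by Neumann iteration, leveraging the smallness $E \ll 1$ together with the bounds on $A_0$ already furnished by Proposition~\ref{p:data-cg}(b).

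First, I would expand $\Delta_A B_0 = D^j D_j B_0$ and invoke the Coulomb gauge $\partial_j A_j = 0$ to rewrite \eqref{b0} in the form
\[
\Delta B_0 + 2[A_j, \partial_j B_0] + [A_j,[A_j, B_0]] = [B_j,\partial_0 A_j] + [A_j,\partial_0 B_j] + 2[B_j,\partial_j A_0] + 2[B_j,[B_j,A_0]].
\]
For \eqref{dtb0}, I would distribute the outer $\partial_j$ and use both Coulomb conditions $\partial_j A_j = 0$ and $\partial_j B_j = 0$ (the latter holding since $B$ is a Coulomb datum for the linearized equation) to peel off the principal part acting on $\partial_t B_0$, producing an equation of the form $\Delta(\partial_t B_0) + \mathcal{K}_A(\partial_t B_0) = \text{source}$, where $\mathcal{K}_A$ is a first-order differential operator with coefficients involving only $A_j$ and the source depends on $B_0$ only through the already-estimated quantity produced in the previous step.

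Second, I would view each elliptic equation as a linear fixed-point problem. The operator $\Delta + \mathcal{K}_A$ maps $\dot H^s \to \dot H^{s-2}$, respectively $\dot H^{s-1} \to \dot H^{s-3}$, as a perturbation of $\Delta$; by the fractional Leibniz rule, the Sobolev embedding $\dot H^1(\R^4)\hookrightarrow L^4$, and the smallness $\|A_j\|_{\dot H^1} \lesssim E^{1/2}$, the perturbation has operator norm $\lesssim E^{1/2} \ll 1$, and inversion by Neumann series is immediate. The source terms on the right-hand sides of \eqref{b0} and \eqref{dtb0} would then be bounded in the appropriate negative Sobolev norms by standard bilinear/trilinear Hölder--Sobolev estimates, each occurrence of $A_j$ or $\partial_t A_j$ contributing a factor $E^{1/2}$ and each occurrence of $A_0$ or $\partial_t A_0$ a factor $E$ via Proposition~\ref{p:data-cg}(b). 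Combining yields
\[
\|B_0\|_{\dot H^s} + \|\partial_t B_0\|_{\dot H^{s-1}} \lesssim E^{1/2}\bigl(\|B_j\|_{\dot H^s} + \|\partial_t B_j\|_{\dot H^{s-1}}\bigr),
\]
which is the stated estimate after squaring. Uniqueness follows at once from the same invertibility of $\Delta + \mathcal{K}_A$, applied to the difference of two putative solutions.

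The main technical obstacle will be the careful verification of the product estimates at a regularity $s$ slightly below $1$ in the scaling-critical dimension $4$; in particular, the mixed $B$--$A_0$ terms and the higher-order bracket $[B_j,[B_j,A_0]]$ in \eqref{b0} must be decomposed via a Bony paraproduct so that each piece admits a clean Hölder--Sobolev bound landing in $\dot H^{s-2}$ without logarithmic loss. The constraint $\tfrac12 < s < 1$ is exactly the threshold keeping the relevant Sobolev embeddings $\dot H^s(\R^4)\hookrightarrow L^{8/(4-2s)}$ and their duals simultaneously available on both sides of every product, which is what makes the scheme close.
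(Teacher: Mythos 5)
The paper's own proof is a one-liner: ``This is also a simple fixed point argument which is based on the Sobolev embeddings. The details are left for the reader.'' Your proposal is exactly that fixed point argument, with the details filled in — you expand $\Delta_A$ using the Coulomb condition, treat the first-order commutator terms as a small perturbation of $\Delta$ using $\|A_j\|_{\dot H^1}\lesssim E^{1/2}$, invert by Neumann series, and bound the source terms by Sobolev multiplication at the borderline regularity, all of which is what the paper is gesturing at. This matches the paper's approach.

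One remark worth flagging (not an error on your part, but worth noting if you carry out the paraproduct verification): the term $2[B_j,[B_j,A_0]]$ as printed in \eqref{b0} is quadratic in $B$, which is inconsistent with \eqref{b0} being the linearization of \eqref{a0}; working out $\frac{d}{d\epsilon}\big|_{\epsilon=0}\Delta_{A+\epsilon B}(A_0+\epsilon B_0)$ shows the correct cubic term is of the form $[B_j,[A_j,A_0]]+[A_j,[B_j,A_0]]$. With the equation corrected so that every source term is linear in $B$ and contains at least one factor of $A_j$ or $A_0$, your factor count (one $E^{1/2}$ per $A_j$, one $E$ per $A_0$, hence $E^{1/2}$ overall since $E\ll1$) is clean and gives the stated bound after squaring.
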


\begin{proof}
This is also a simple fixed point argument which is based on the Sobolev embeddings. The 
details are left for the reader.

\end{proof}
\section{The $S$ and $N$ spaces}

With minor modifications, we will use the function spaces introduced in 
\cite{MKG} in the whole of $\R^{4+1}$. We also need to work
on bounded time intervals, for which we use the set-up of \cite{OT2}.

\subsection{The $S^1$, $N$, $Z$ and $Y^{1}$ spaces} 
We begin our discussion with the function spaces introduced in \cite{MKG},
namely $S^1$ for the MKG waves $(A,\phi)$ and $N$ for the inhomogeneous terms 
in both the $\Box$ and the $\Box_A$ equation. In addition to these we also recall
the $Z$ norm, which plays a key role in the reiteration of the equation in connection to
trilinear estimates and the second null structure.

 These are spaces of functions defined 
over all of $\R^{n+1}$, together with the related spaces $S$ and $N^*$.
They are all defined via their dyadic subspaces, with norms 
\[
\| \phi\|_{X}^2 = \sum_{k \in \Z} \|\phi_k\|_{X_k}^2, \qquad X \in \{ S,S^1,N,Z\}.
\]
Here we use the $\ell^2$ Besov structure. On occasion we will also need
$\ell^1$ and $\ell^\infty$ type Besov norms, which are denoted by $\ell^1 X$, 
respectively $\ell^\infty X$, with norms
\[
\| \phi\|_{\ell^1 X} = \sum_{k \in \Z} \|\phi_k\|_{X_k},\qquad 
\| \phi\|_{\ell^\infty X} = \sup_{k\in \Z} \|\phi_k\|_{X_k},
 \qquad X \in \{ S,S^1,N,Z\}. 
\]

We recall the definition of their norms. With minor modifications at high modulations, we follow
\cite{MKG}. For $N_k$ we set
\begin{equation}
  N_k \ = \ {L^1 L^2} +  X_1^{0,-\frac{1}{2}},
\label{n}
\end{equation}
where
\begin{equation*}
\|\phi\|_{X^{s, b}_{r}} := \big( \sum_{k} \big( \sum_{j} (2^{sk} 2^{bj} \|P_{k} Q_{j} \phi\|_{L^{2} L^{2}})^{r} \big)^{\frac{2}{r}}\big)^{\frac{1}{2}} .
\end{equation*}
The $N_{k}$ norm is the same as in \cite{MKG}.

The $S_k$ space is a strengthened version of $N_k^*$,
\begin{equation} \label{s-vs-n}
 X_1^{0,\frac{1}{2}}   \subseteq S_k\subseteq L^\infty L^2  \cap X_\infty^{0,\frac{1}{2}} = N_{k}^{\ast}, 
\end{equation}
while $S_k^1$ is defined as 
\begin{equation}\label{s1}
  \| \phi\|_{S_k^1} = \| \nabla \phi\|_{S_k} 
  + 2^{-\frac{k}2} \|\Box \phi\|_{L^2 L^{2}}
 + 2^{-\frac{4k}9} \|\Box \phi\|_{L^{\frac95} L^2} .
\end{equation}
As in \cite{OT2}, compared to \cite{MKG} we have loosened
the $\ell^{1}$ summability of the $\Box^{-1} L^{2}L^{2}$ norm and
added the $\Box^{-1} L^{\frac95} L^2$ norm above. Both of these
modifications are of interest only at high modulations.  The exact
exponent $9/5$ is not really important, for our purposes it only
matters that it is less than two and greater than $5/3$.

We now recall the definition of the space $S_{k}$ from
\cite{MKG}. The space $S_k$ scales like free waves with
$L^{2} \times \dot{H}^{-1}$ initial data, and is defined by
\begin{equation}
  \| \phi\|_{S_k}^2 \ = \ \| \phi\|_{S^{str}_k}^2 + \|\phi\|_{S^{ang}_k}^2
  +  \|\phi\|_{X_\infty^{0,\frac{1}{2}}}^2  \ , \notag
\end{equation}
where:
\begin{equation}\label{str_and_defn}
\begin{aligned}
  \|\phi\|_{S^{str}_{k}} \ =& \sup_{2 \leq q, r, \leq \infty, \
    \frac{1}{q}+ \frac{3/2}{r}\leq \frac{3}{4}}
  2^{(\frac{1}{q}+\frac{4}{r}-2)k} \|(\phi, 2^{-k} \partial_{t}
  \phi)\|_{L^q L^r} \ , \quad
  \|\phi\|_{S^{ang}_{k}} = \sup_{l < 0} \| \phi\|_{S^{ang}_{k, k+2l}} \ , \\
  \| \phi\|_{S^{ang}_{k, j}}^{2} =& \sum_{\omega}\| P^\omega_l
  Q_{<k+2l}\phi\|_{S_k^\omega(l)}^2 \qquad \hbox{ with } l = \lceil
  \frac{j-k}{2} \rceil.
\end{aligned}
\end{equation}
The $S^{str}_{k}$ norm controls all admissible Strichartz norms on
$\R^{1+4}$. The $\omega$-sum in the definition of $S^{ang}_{k, j}$ is
over a covering of $\S^{3}$ by caps $\omega$ of diameter $2^{l}$
with uniformly finite overlaps, and the symbols of $P^{\omega}_{l}$ form
a smooth partition of unity associated to this covering.  The angular
sector norm $S_{k}^{\omega}(l)$ combines the null frame space as in wave
maps \cite{Tao:2001gb, MR1827277} with additional square-summed norms
over smaller radially directed blocks $\mathcal{C}_{k'}(l')$ of dimensions
$2^{k'} \times (2^{k'+l'})^{3}$. We first define

\begin{align}
  \| \phi\|_{P\!W^\pm_\omega(l)} \ &=\ \inf_{\phi=\int \!\!
    \phi^{\omega'} } \int_{|\omega-\omega'|\leqslant 2^{l}}
  \| \phi^{\omega'} \|_{L^2_{\pm\omega' }(L^\infty_{(\pm\omega')^\perp}
    )} d\omega' \ , \notag\\
  \| \phi\|_{N\!E} \ &= \ \sup_\omega \|\snabla_\omega
    \phi\|_{L^\infty_{ \omega} (L^2_{\omega^\perp})} \ , \notag
\end{align}
where the norms are with respect to $\ell_\omega^\pm = t\pm
\omega\cdot x$ and the transverse variable in the
$(\ell^{\pm}_{\omega})^{\perp}$ hyperplane (i.e., constant
$\ell^{\pm}_{\omega}$ hyperplanes). Moreover, $\snabla_\omega$ denotes
tangential derivatives on the $(\ell^+_\omega)^\perp$ hyperplane. As in
\cite{MKG}, we set:
\begin{multline}
  \| \phi\|_{S_k^\omega(l)}^2 \ = \ \| \phi\|_{S_k^{str}}^2 +
  2^{-2k}\|\phi\|_{N\!E}^2 + 2^{-3k}\sum_\pm
  \| Q^\pm  \phi\|_{P\!W^\mp_\omega(l) }^2 \\
  + \sup_{\substack{k'\leqslant k ,   l'\leqslant 0\\
      k+2l\leqslant k'+l'\leqslant k+l }} \sum_{\mathcal{C}_{k'}(l') }
  \Big( \|P_{\mathcal{C}_{k'}(l')} \phi\|_{S_k^{str}}^2
  + 2^{-2k}\| P_{\mathcal{C}_{k'}(l')} \phi\|_{N\!E}^2\\
  + 2^{-2k'-k}\|P_{\mathcal{C}_{k'}(l')} \phi\|_{L^2(L^\infty)}^2 +
  2^{-3(k'+l')}\sum_\pm \| Q^\pm P_{\mathcal{C}_{k'}(l')}
    \phi\|_{P\!W^\mp_\omega(l) }^2 \Big) \ , \label{Sl_def}
\end{multline}
where the $\mathcal{C}_{k'}(l')$ sum runs over a covering of $\R^{4}$
by the blocks $\mathcal{C}_{k'}(l')$ with uniformly finite overlaps,
and the symbols of $P_{\mathcal{C}_{k'}(l')}$ form an associated
partition of unity.  We emphasize the role played by the next to last
term in the above expression, which captures the gain in Strichartz
estimates on blocks which are shorter radially. This gain was first
discovered in \cite{Klainerman:1999do}, and plays a key role in
getting some of the sharper bilinear bounds which are needed in the
present paper. We remark that there is a similar gain at the level of
the $L^2 L^6$ Strichartz norm, which could be easily added to the
$S^1$ structure; this would improve some of the intermediate estimates
in this paper, but would not affect the final result in a significant
way.

We also define the smaller space $S_k^\sharp
\subset S_k$ (see the bound \eqref{lin-S} below) by
\[
\| u \|_{S_k^\sharp} = \| \Box u\|_{N_k} + \|\nabla u\|_{L^\infty L^2}.
\]
On occasion we need to separate the two characteristic cones
$\{ \tau = \pm |\xi|\}$. Thus we define the spaces $N_{k, \pm}$, $S^{\sharp}_{k, \pm}$ and $N^{\ast}_{k, \pm}$ in an obvious fashion, so that
\begin{equation*}
N_k = N_{k,+} \cap  N_{k,-}, \quad
S_k^\sharp = S_{k,+}^\sharp + S_{k,-}^\sharp, \quad
N^*_k = N^*_{k,+} + N_{k,-}^* \ .
\end{equation*}

Next we describe an auxiliary space of the type $L^1(L^\infty)$ which
will be useful for decomposing the nonlinearity:
\begin{equation}
  \|\phi\|_{Z}^2 \ =\  \sum_k  \| P_k\phi \|^2_{Z_k} \ , \ \	
  \| \phi\|_{Z_k}^2 \ =\  \sup_{l<C} 
  \sum_{\omega }2^{l}\|P^\omega_lQ_{k+2l} \phi \|_{L^1(L^\infty)}^2
  \ . \notag
\end{equation}
Note that as defined this space already scales like $\dot{H}^1$ free
waves. In addition, note the following useful embedding which is a direct
consequence of Bernstein's inequality:
\begin{equation}
  \Box^{-1}  L^1(L^2) \ \subseteq \ Z \ . \label{B_embed}
\end{equation}
Finally, the function space $Y_1$  for $A_0$ is simple to describe,
since the $A_0$ equation is elliptic:
\begin{equation}
\|A_0\|_{Y^1}^2 = \| \nabla A_0\|_{L^\infty L^2}^2 + \| \nabla A_0\|_{L^2 
\dot H^{\frac12}}^2  \ , \notag
\end{equation}

One of the results in \cite{MKG} asserts that we have linear solvability for the 
d'Alembertian in our setting.  

\begin{proposition}
 We have  the linear estimates
 \begin{align}
 \| \nabla \phi\|_{S}
 &\lesssim \  \| \phi[0]\|_{\H} + \|\Box \phi\|_{N}  \ , 	\label{lin-S} \\
 \|\phi\|_{S^1} \ 
 &\lesssim \  \| \phi[0]\|_{\H} + \|\Box \phi\|_{N \cap L^{2} \dot{H}^{-\frac{1}{2}} \cap L^{\frac{9}{5}} \dot{H}^{-\frac{4}{9}}}  \ . 	\label{lin}
  \end{align}
\end{proposition}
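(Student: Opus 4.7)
The plan is to prove both estimates simultaneously by reducing to their frequency-localized versions and then invoking the $S_k$ construction from \cite{MKG}, with only a trivial adjustment to accommodate the two modulation norms in \eqref{s1}. By the Besov-type structure of $S$, $S^1$ and $N$, it suffices to establish the single-frequency bound
\[
\|\nabla P_k \phi\|_{S_k} \lesssim \|P_k \phi[0]\|_{\dot H^1 \times L^2} + \|P_k \Box \phi\|_{N_k}
\]
for each dyadic $k$, and then square-sum in $k$. I would split $P_k \phi = \phi^{hom} + \phi^{inh}$, with $\phi^{hom}$ the homogeneous wave evolution of $P_k\phi[0]$ and $\phi^{inh}$ the zero-data Duhamel solution with source $P_k \Box \phi$.

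For $\phi^{hom}$, the three components of $\|\cdot\|_{S_k}$ are handled separately. The Strichartz component $S_k^{str}$ is the standard family of frequency-localized admissible $L^q L^r$ estimates for free waves on $\R^{4+1}$; the $X_\infty^{0,1/2}$ component reduces to energy conservation since $\phi^{hom}$ is supported on the light cone; and the angular sector norm $S_k^{ang}$, together with its null-frame $PW^{\pm}_\omega(l)$, $NE$, and square-summed $\mathcal C_{k'}(l')$-block pieces, is controlled by the classical null-frame estimates for free waves developed in \cite{Tao:2001gb, MR1827277} and adapted in \cite{MKG}, including the $L^2 L^\infty$ gain on radially short blocks first observed in \cite{Klainerman:1999do}.

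For $\phi^{inh}$, I would treat the two building blocks of $N_k = L^1 L^2 + X_1^{0,-1/2}$ separately. The $L^1 L^2$ piece is handled by Duhamel's formula and Minkowski's inequality, reducing to the homogeneous bound. The $X_1^{0,-1/2}$ piece is decomposed dyadically in modulation $j$; on each shell of thickness $2^j$, a time-localization argument together with the $X_\infty^{0,1/2} \subset N_k^*$ duality furnishes the gain $2^{-j/2}$ matching the $X_1^{0,-1/2}$ weight, and for the angular sector norms one additionally decomposes in angle $l$ subject to the constraint $j \le k+2l$, after which each piece reduces to the homogeneous estimate on a thickened cone. This is carried out in full in \cite{MKG} and I would cite it rather than redo it.

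Finally, \eqref{lin} follows from \eqref{lin-S} applied to $\nabla\phi$, combined with the trivial Bernstein/Sobolev reductions
\[
2^{-k/2}\|P_k \Box \phi\|_{L^2 L^2} \lesssim \|P_k \Box \phi\|_{L^2 \dot H^{-1/2}}, \qquad
2^{-4k/9}\|P_k \Box \phi\|_{L^{9/5} L^2} \lesssim \|P_k \Box \phi\|_{L^{9/5} \dot H^{-4/9}},
\]
and $\ell^2$ summation in $k$. The one genuinely delicate ingredient of the whole argument is the verification of the angular/null-frame component $S_k^{ang}$ for both homogeneous and inhomogeneous pieces; this is the main obstacle, but since the construction and its linear estimates are carried out verbatim in \cite{MKG} and we use them without alteration, no new work beyond citation is needed.
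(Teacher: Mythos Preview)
Your proposal is correct and matches the paper's approach: the paper simply identifies \eqref{lin-S} with the embedding $S^\sharp \subset S$ established in \cite{MKG}, and notes that \eqref{lin} follows immediately from \eqref{lin-S} together with the definition \eqref{s1}. You have unpacked what the \cite{MKG} argument contains (homogeneous/inhomogeneous splitting, Strichartz, $X^{s,b}$, and null-frame components) and handled the two extra high-modulation norms in \eqref{s1} by the obvious Bernstein reduction, but ultimately you defer to \cite{MKG} exactly as the paper does.
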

Here \eqref{lin-S} is the embedding $S^{\sharp} \subset S$, whereas \eqref{lin} follows immediately from \eqref{lin-S}.

\subsection{Interval localization} \label{subsec:intervals} So far, we
have described the global setting in \cite{MKG}.  However,
in this article we need to work on compact time intervals, therefore we also
need suitable interval localized function spaces. For this we borrow the set-up
of \cite{OT2}. 

We start by defining
\begin{equation}\label{sn(i)-def}
  \| \phi\|_{S^1[I]} = \inf_{\phi = \tphi_{|I}}  \|\tphi\|_{S^1},  \qquad
  \| f \|_{N[I]} = \inf_{f = \tilde f_{|I}}  \|\tilde f\|_{N}
\end{equation}
The next result from \cite{OT2} provides an alternate take on these
definitions:

\begin{proposition} \label{p:intervals}
  Consider a time interval $I=[0,T]$, and its
    characteristic function $\chi_I$. Then we have the bounds
    \begin{equation}\label{sn(i)}
      \|\chi_I \phi\|_{S} \lesssim \|\phi\|_{S}, \qquad \|\chi_I f\|_{N} \lesssim \| f\|_{N},
    \end{equation}
    The latter norm is also continuous as a function of $I$. We also
    have the linear estimates
    \begin{align}
      \|\nabla \phi\|_{S[I]} \ &\lesssim \| \phi[0]\|_{\H} + \|\Box \phi\|_{N[I]} , \label{lin-S(i)} \\
      \|\phi\|_{S^1[I]} \ &\lesssim \| \phi[0]\|_{\H} + \|\Box
      \phi\|_{(N \cap L^{2} \dot{H}^{-\frac{1}{2}} \cap
        L^{\frac{9}{5}} \dot{H}^{-\frac{4}{9}}) [I]} . \label{lin(i)}
    \end{align}
  
\end{proposition}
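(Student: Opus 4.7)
The statements in (ii) and (iii) are straightforward consequences of (i) together with the global linear estimates \eqref{lin-S}, \eqref{lin}: given $\phi$ defined on $I$ with $\phi[0] \in \H$ and $\Box\phi = f$ on $I$, pick any extension $\tilde f$ of $f$ with $\|\tilde f\|_N \leq 2\|f\|_{N[I]}$, then solve $\Box \tilde\phi = \tilde f$ globally with Cauchy data $\phi[0]$ at $t=0$. Uniqueness forces $\tilde\phi|_I = \phi$, so $\tilde\phi$ is an admissible extension, and \eqref{lin-S} (resp.\ \eqref{lin}) applied to $\tilde\phi$ yields \eqref{lin-S(i)} (resp.\ \eqref{lin(i)}). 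Thus all the substance is in proving (i), and then continuity of $\|\chi_I f\|_N$ in $I$, which follows from an approximation by smooth cutoffs together with the norm bound.

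For the cutoff bound on $S$, I would dispose of each component of the $S_k$ norm separately. The Strichartz pieces $S^{str}_k$ pose no problem, since multiplication by $\chi_I(t)$ is bounded by $1$ on $L^qL^r$ for all $q,r\geq 1$. For the null frame pieces $PW^\pm_\omega(l)$ and $NE$, one observes that in the null coordinates $(\ell^\pm_\omega, (\ell^\pm_\omega)^\perp)$ the time cutoff $\chi_I(t)$ still has modulus $\leq 1$ and only shrinks the domain of the inner $L^\infty_{(\ell^\pm_\omega)^\perp}$ sup at each fixed $\ell^\pm_\omega$, so it is again a contraction. The block-localized versions with $P_{\mathcal{C}_{k'}(l')}$ require an additional step, namely to check that $P_{\mathcal{C}_{k'}(l')}\chi_I = \chi_I P_{\mathcal{C}_{k'}(l')} + [P_{\mathcal{C}_{k'}(l')},\chi_I]$, with the commutator controlled by a kernel in $L^1$ uniformly in $I$; this is standard once one writes $P_{\mathcal{C}_{k'}(l')}$ as a convolution operator with an $L^1$ kernel.

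The real issue is the $X^{0,1/2}_\infty$ component of $S_k$ (and symmetrically the $X^{0,-1/2}_1$ component of $N_k$), since $\chi_I$ is borderline at modulation scale $\frac12$. The strategy is to write $\chi_I \phi = \sum_j \chi_I Q_j \phi$ and exploit that $\widehat{\chi_I}(\tau)$ has size $\min(|I|, |\tau|^{-1})$, so convolution in $\tau$ by $\widehat{\chi_I}$ only weakly redistributes modulation. Concretely, for the $X^{0,1/2}_\infty$ bound one shows $\|Q_h(\chi_I Q_j \phi)\|_{L^2L^2}\lesssim 2^{-|h-j|/2}\|Q_j\phi\|_{L^2L^2}$ (plus an $L^\infty L^2$ contribution when $h$ is small), which upon taking $\sup_h 2^{h/2}$ and $\sup_j 2^{j/2}$ produces only a logarithmic loss; this loss is absorbed using the $L^\infty L^2$ piece present in $S_k$. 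The same type of argument, but summed in $\ell^1$ and combined with the auxiliary $L^1 L^2$ and $L^{9/5}\dot H^{-4/9}$ components, handles $N_k$. This is also where the choice to relax $X^{0,1/2}_1 \to X^{0,1/2}_\infty$ and to add the $L^{9/5}$ piece in \eqref{s1} pays off: without these changes sharp cutoffs would genuinely fail to be bounded.

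The main obstacle is therefore this modulation endpoint, and the argument above is precisely the one carried out in \cite{OT2}; continuity of $I\mapsto \|\chi_I f\|_N$ follows by applying the same estimates to $\chi_I - \chi_{I'}$ and using dominated convergence in the modulation decomposition.
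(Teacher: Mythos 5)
The paper does not actually prove this proposition; it is quoted verbatim from \cite{OT2} and the proof lives there. So any comparison can only be against the argument you would expect to find in \cite{OT2}, and on that basis your sketch captures the correct overall architecture but leaves a genuine gap.

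What is correct: the reduction of \eqref{lin-S(i)} and \eqref{lin(i)} to \eqref{sn(i)} plus the global estimates \eqref{lin-S}, \eqref{lin} by extending $\Box\phi$ and solving the Cauchy problem is exactly right. Your identification of the $X^{0,1/2}_\infty$ (and dually $X^{0,-1/2}_1$) component as the modulation-endpoint obstacle, and the remark that the switch from $\ell^1$ to $\ell^\infty$ summation in $b$ and the extra $L^{9/5}\dot H^{-4/9}$ piece in \eqref{s1} are there precisely to absorb the logarithmic redistribution produced by $\widehat{\chi_I}(\tau)\sim \min(|I|,|\tau|^{-1})$, is the heart of the matter.

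The gap is in your treatment of the angular component $S^{ang}_k$. You assert that the $PW^\pm_\omega(l)$ and $NE$ norms are contracted simply because $|\chi_I|\le 1$. This is not the case, for two reasons. First, by definition \eqref{str_and_defn}--\eqref{Sl_def}, every one of these pieces is evaluated not on $\phi$ itself but on projected objects such as $P^\omega_l Q_{<k+2l}\phi$, $Q^\pm P_{\mathcal C_{k'}(l')}\phi$, etc.; since $\chi_I$ commutes with none of $Q_{<k+2l}$, $Q^\pm$, $P^\omega_l$, $P_{\mathcal C_{k'}(l')}$, you cannot pass $\chi_I$ inside the projections and appeal to the pointwise bound. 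The commutator with the modulation projections is exactly the same borderline issue you flagged for $X^{0,1/2}_\infty$, so it cannot be wished away for the angular pieces either. Second, even granting the commutation, the $NE$ norm is $\sup_\omega\|\snabla_\omega\phi\|_{L^\infty_\omega(L^2_{\omega^\perp})}$ where $\snabla_\omega$ is the full tangential gradient on the null hyperplane $\{\ell^+_\omega=\mathrm{const}\}$; this includes the null direction $L^\omega_-=\partial_t-\omega\cdot\nabla_x$, so $\snabla_\omega(\chi_I\phi)$ contains a term $(\partial_t\chi_I)\phi=(\delta_0-\delta_T)\phi$ whose $L^2_{\omega^\perp}$ norm is infinite. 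So the $NE$ piece is not ``trivially handled in null coordinates''; it requires the same modulation-decomposition argument you outlined for $X^{0,1/2}_\infty$, applied after commuting $\chi_I$ past the $Q$-projections and absorbing the resulting errors. This is precisely why the statement is nontrivial and relegated to \cite{OT2}.
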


Note that a consequence of the above proposition is that,
up to equivalent norms, we can replace the arbitrary extensions in
\eqref{sn(i)-def} by the zero extension in the $N$ case, respectively
by homogeneous waves with $(\phi, \partial_{t} \phi)$ as the data at each
endpoint outside $I$ in the $S^1$ case.

\section{The proof of the main result}
\label{s:proof}
In this section we provide the main intermediate results used in the
proof, and we use them in order to complete the proof of the
Theorem~\ref{t:main}. For convenience, we restate the theorem here
in a more precise form:

\begin{theorem}\label{t:main-sn}
The Yang-Mills system in Coulomb gauge \eqref{ym-cg}-\eqref{a0}-\eqref{dta0}
is globally well-posed in $\dot H^1 \times L^2$ for  initial data which is small 
in $\H = \dot H^1 \times L^2$,
\begin{equation}\label{small-data}
\| A_x(0),\partial_t A_x(0)\|_{\H} \leq \epsilon,
\end{equation}
in the following sense:

(i) (Regular data) If in addition the data $(A_{0j}, A_{1j})$ is more regular, 
$(A_{0j}, A_{1j}) \in \H^N$, then there exists a unique global in time regular solution
$(A_j,\partial_0 A_j) \in S^N$, which has a Lipschitz dependence 
on the initial data locally in time in the $\H^N$ topology.

(ii) (Rough data) The initial data to solution map admits an extension
\[
\H \ni (A_{j0}, A_{j1}) \to (A_j, \partial_t A_j) \in S^1,
\]
globally in time, for all small data as above, and which is 
continuous in the $\H \cap \dot H^s \to S^1 \cap \dot S^s$ topology 
(applied to differences of solutions) for $s < 1$ but close to $1$.
\end{theorem}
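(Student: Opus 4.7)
The plan is to run a bootstrap/continuity argument for the $S^1$ norm of the spatial components $A_j$, with $A_0$ controlled separately in $Y^1$ via the elliptic equations \eqref{a0}--\eqref{dta0}, and then upgrade the resulting a priori bound to genuine well-posedness and continuous dependence by a linearized analysis in a subcritical norm $\dot \H^s$ with $s<1$. The starting point is the linear estimate \eqref{lin}, which reduces matters to bounding the nonlinearity of \eqref{ym-cg} in the norm $N\cap L^2 \dot H^{-1/2}\cap L^{9/5}\dot H^{-4/9}$, given $A_j \in S^1$ of comparably small size, together with the elliptic bounds of Proposition~\ref{p:data-cg}--\ref{p:data-cg-lin} which ensure that the Coulomb data hypothesis \eqref{small-data} is consistent and that $A_0$, $\partial_t A_0$ are harmless.

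The crucial observation is that not every quadratic term in \eqref{ym-cg} is perturbative: the high-low interaction $2[A_{<k-c}^\alpha,\partial_\alpha A_k]$ must be kept on the left side. Localizing \eqref{ym-cg} at frequency $2^k$ therefore produces the paradifferential form
\[
\Box A_k + 2 P_k \P\bigl[A_{<k-c}^\alpha,\partial_\alpha A_k\bigr] \;=\; P_k\mathcal{N}_{\text{pert}}(A),
\]
with $\mathcal N_{\text{pert}}$ collecting the high-high bilinear pieces, the remaining high-low pieces, the cubic terms, and all terms involving $A_0$ and $\partial_t A_0$. Proposition~\ref{p:pert}, proved in Section~\ref{s:pert}, supplies the bilinear and trilinear $S^1\times S^1\to N$ bounds that control $\mathcal N_{\text{pert}}$, with the delicate trilinear piece exploiting the second null structure via Lemma~\ref{l:tri}. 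Proposition~\ref{p:para} supplies an approximate parametrix for the paradifferential operator on the left, effectively inverting it from $N$ into $S^1$ with a small error. Combining these two propositions with \eqref{lin} yields a nonlinear map that contracts on a ball of radius $C\epsilon$ in $S^1$, furnishing the global existence and uniqueness of regular solutions; the propagation of the $S^N$ bound in part (i) is obtained by rerunning the same estimates against a frequency envelope $c_k$ for the data, as in Section~\ref{s:elliptic}, which yields $\ell^2$-summability of the $P_k$ pieces at the higher regularity level.

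For part (ii) I would use the linearized equation \eqref{ym-cg-lin}--\eqref{dtb0}: Proposition~\ref{p:linearize} asserts that, for a background $A$ small in $S^1$, any Coulomb linearized data $(B_j,\partial_t B_j)(0)\in\dot\H^s$ produces a linearized solution whose $\dot\H^s$-based $S$-norm is bounded by the data. The slight subcritical loss is essential because it allows the renormalization $O(t,x,\xi)$ already built for $A$ to be reused for $B$ without having to close an endpoint estimate, the off-diagonal frequency gaps being compensated by factors $2^{-\delta|k-k'|}$. A density argument, using Proposition~\ref{p:data-cg} together with the interval-truncation mechanism of Proposition~\ref{p:intervals}, then extends the flow map from $\H^N$-data to all small $\H$-data and produces the stated $\H\cap\dot\H^s\to S^1\cap \dot S^s$ continuous dependence, completing the theorem.

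The main obstacle, concentrated in Proposition~\ref{p:para} and occupying Sections~\ref{s:para}--\ref{s:error}, is the non-abelian parametrix construction for the paradifferential magnetic wave operator $\Box_{\underline{A}_{\bullet\ll 1}}$. One seeks a $\G$-valued symbol $O(t,x,\xi)$ such that $\mathrm{Op}(\mathrm{Ad}(O))$ approximately intertwines $\Box_{\underline A_{\bullet\ll 1}}$ with $\Box$, with errors acceptable in $N$. Unlike the Maxwell--Klein--Gordon case of \cite{MKG}, the non-commutativity of $\g$ forces one to work with the full adjoint representation rather than with a scalar phase, and the phase $\Psi$ must be built through a continuous version of the dyadic gauge of \cite{Tao:2001gb} with angular aperture $2^{\delta k}$, with all symbol bounds organized by the decomposability calculus of Section~\ref{s:decomp}. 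The resulting $L^2$, Strichartz and null frame bounds on $\mathrm{Op}(\mathrm{Ad}(O))$, together with the fine error estimates that rely on the trilinear Lemma~\ref{l:ttri}, are what ultimately drive the bootstrap and therefore the theorem.
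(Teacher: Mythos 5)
There is a genuine gap in the existence mechanism. You assert that Propositions~\ref{p:pert} and~\ref{p:para} together with \eqref{lin} produce ``a nonlinear map that contracts on a ball of radius $C\epsilon$ in $S^1$, furnishing the global existence and uniqueness of regular solutions.'' This contraction cannot close. First, the paper explicitly warns that Lipschitz dependence on the data fails in the $\H$ (hence $S^1$) topology and holds only in $\dot\H^s$ with $s<1$ --- the loss being forced by the high$\times$low term $[B_{\alpha,<k},\partial^\alpha A_k]$ in the linearized equation \eqref{para-lin}, which has no analogue in the original equation and requires Proposition~\ref{p:tri}. So any difference of two candidate iterates in $S^1$ cannot be contracted in $S^1$. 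Second, there is a circularity: Proposition~\ref{p:para} is stated only for backgrounds $A$ that already solve the Yang--Mills system in Coulomb gauge on the interval (and its error estimate \eqref{error_ests} genuinely uses the equation for $A_x$ and the constraint for $A_0$ through Lemma~\ref{l:ttri}), so it is not available as the inverse of a frozen-coefficient linear operator in a Picard iteration with an arbitrary iterate as coefficient.

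The paper's proof instead takes for granted local well-posedness at high regularity $\H^N$ (a routine perturbative/energy argument), and uses Propositions~\ref{p:pert} and~\ref{p:para} solely as a priori estimates closed through frequency envelopes (Step~1), combined with a time-continuity (open/closed) argument (Step~2) to show the $S^1$ bound propagates globally. Uniqueness and continuous dependence are obtained separately via the \emph{weak} Lipschitz bound \eqref{lip} in $\dot\H^s$, $s<1$ (Step~3), and rough data are reached by density together with the frequency-envelope bound \eqref{Sc} (Step~4). You reach Steps~3 and~4 correctly, but the existence step should follow the continuity/bootstrap route, not contraction. Two smaller inaccuracies: the frequency envelope bookkeeping is carried out in Section~\ref{s:proof} (not Section~\ref{s:elliptic}), and Lemma~\ref{l:tri} enters through Proposition~\ref{p:tri} in the linearized analysis, not through Proposition~\ref{p:pert}, whose proof is a direct bilinear and Strichartz estimate.
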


To set the stage for the proof of the theorem, we assume that we have a solution 
$A_j$ for the Yang mills equation \eqref{ym-cg} in a time interval $I$ containing $0$,
and further that this solution satisfies 
\begin{equation}\label{small-s}
\|A_j\|_{S^1[I]} \leq  \epsilon \ll 1.
\end{equation}

We begin by rewriting the equation in a paradifferential fashion,
\begin{equation}\label{para}
\Box A_{j,k}  + 2 \P [A_{\alpha,<k}, \partial^\alpha A_{j,k}] = F_k,
\end{equation}
where $F_k$ contains only terms that will be treated in a perturbative fashion,
\begin{equation}
F_k = \P \left( P_k \left(   [A^\aa,  \p_j A_\aa] - 2 [ A^\alpha_{\geq k}, \partial_\alpha A_j] 
- [ \partial_0 A_0, A_j]  - [A^\alpha,[A_\alpha,A_j]] \right)  - 2
 [ [P_k, A^\alpha_{< k}], \partial_\alpha A_j] \right).
\end{equation}
To estimate $F$ we use the following:

\begin{proposition} \label{p:pert} Assume that $A$ is a solution to
  the Yang-Mills equation in Coulomb gauge in an interval $I$, which
  satisfies \eqref{small-s}.  Then  for any admissible frequency envelope
$c$ we have 
\begin{equation}\label{FinN}
\|F\|_{N_{c^2}[I]} \lesssim  \|A_j\|_{S^1_c[I]}^2.
\end{equation} 
\end{proposition}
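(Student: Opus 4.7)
The plan is to split $F_k$ into its five explicit pieces and estimate each in $N_{c^2}[I]$ using the bilinear and trilinear framework of \cite{MKG} together with the elliptic theory of Section~\ref{s:elliptic}. Write $F_k = F_k^{(1)} + F_k^{(2)} + F_k^{(3)} + F_k^{(4)} + F_k^{(5)}$ with $F_k^{(1)} = \P P_k [A^\alpha, \partial_j A_\alpha]$, $F_k^{(2)} = -2\P P_k [A^\alpha_{\geq k}, \partial_\alpha A_j]$, $F_k^{(3)} = -2\P [[P_k, A^\alpha_{<k}], \partial_\alpha A_j]$, $F_k^{(4)} = -\P P_k [\partial_0 A_0, A_j]$, and $F_k^{(5)} = -\P P_k [A^\alpha, [A_\alpha, A_j]]$. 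For each piece I would establish a frequency-enveloped bilinear or trilinear bound and then square-sum over $k$.

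For $F^{(1)}$ the Coulomb condition $\partial^j A_j = 0$ combined with the Leray projection $\P$ recasts the bracket, modulo commutator corrections that can be absorbed in the other pieces, as a combination of classical null forms of type $Q_{ij}(A,A)$, to which the bilinear null-form bound $S^1 \times S^1 \to N$ from \cite{MKG} applies dyadically. The temporal contribution $[A^0, \partial_j A_0]$ is disposed of together with $F^{(4)}$. For $F^{(2)}$, the restriction that the first factor has frequency $\geq 2^k$ confines the interaction to high-high cases, where crude bilinear estimates without null structure already suffice. For $F^{(3)}$, the commutator $[P_k, A^\alpha_{<k}]$ has symbol of order $-1$ with $\nabla A^\alpha_{<k}$ in place of $A^\alpha_{<k}$, which compensates the derivative loss on $\partial_\alpha A_j$ and reduces matters to a standard low-high product estimate.

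For $F^{(4)}$, I would first invoke a space-time analogue of Proposition~\ref{p:data-cg}(b) to place $\partial_0 A_0$ in the $Y^1$ space with norm frequency-enveloped by $c^2$; the bracket is then controlled by the product estimate $Y^1 \cdot S^1 \to N$ from \cite{MKG}. The cubic $F^{(5)}$ is handled by iterating the bilinear estimate, or by a direct trilinear bound, to obtain $\|F^{(5)}\|_{N_{c^2}[I]} \lesssim \|A\|_{S^1_c[I]} \|A\|_{S^1[I]}^2$; the extra factor is absorbed by the smallness hypothesis \eqref{small-s}.

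The hard part will be identifying and exploiting the null structure of $F^{(1)}$ cleanly in the noncommutative Lie-algebra setting: the antisymmetry tricks that work for the scalar Maxwell-Klein-Gordon equation must be replayed using Jacobi-type identities combined with a Hodge decomposition that respects $\P$, and this is precisely what dictates the extraction of the paradifferential piece $2\P[A^\alpha_{<k}, \partial_\alpha A_{j,k}]$ to the left of \eqref{para}. A secondary issue is the propagation of the admissible envelope from $c$ to $c^2$ through the elliptic theory of $A_0$; this is handled by noting that $\delta_0$-admissibility is stable under the pointwise products and Riesz-type operations involved. Once these ingredients are in place, the desired bound follows by dyadic summation.
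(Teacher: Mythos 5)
Your decomposition of $F_k$ into five pieces matches the paper, and your treatment of $F^{(4)}$ and $F^{(5)}$ is in line with the paper's Cases 3, 4 and 6 (Strichartz plus the $\ell^1$ elliptic estimate \eqref{l1-a0} for $A_0$, and Strichartz/Sobolev for the cubic). However, there is a genuine gap in your handling of $F^{(2)}$. You claim that the restriction $A^\alpha_{\geq k}$ confines the interaction to cases where ``crude bilinear estimates without null structure already suffice.'' This is false in the high-high-to-low regime $k_1 = k_2 = k' > k$: there the derivative $\partial_\alpha$ on the second high-frequency factor costs $2^{k'}$, and naive Strichartz (say $L^4 L^4 \times L^4 L^4 \to L^2 L^2$) produces at best $2^{k'/2} c_{k'}^2$, which leaves an uncontrolled $\sum_{k'>k} 2^{(k'-k)/2}$ divergence at low output modulation where $X^{0,-\frac12}$ cannot help. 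The paper avoids this precisely by invoking the Coulomb gauge condition to rewrite $[A_j,\partial_j A_i]$ as $\mathcal N(|D_x|^{-1}A,A)$ with $|D_x|^{-1}$ on the high-frequency factor, and then applies the null form bound \eqref{null-main}, whose off-diagonal gain $2^{\delta(k_{\min}-k_{\max})}$ is what makes the $k'$-sum converge. Your $F^{(2)}$ argument as written would fail without this null structure.

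Two lesser points. For $F^{(1)}$ you reach for Jacobi-type identities and a Hodge decomposition; these are not needed. The Leray projector $\P$ annihilates gradients, so it suffices to bound the curl of $[A_i,\partial_j A_i]$, which is literally a $Q_{ij}$ null form in the Lie-bracket setting, handled directly by Proposition~\ref{prop:nullform1} (this is observation (c) in the paper's proof). For $F^{(3)}$, the commutator $[P_k, A^\alpha_{<k}]$ is indeed equivalent to $2^{-k}[|D_x|A^\alpha_{<k},\partial_\alpha A_k]$, but the $2^{-k}|D_x|$ weight alone is $O(2^{k_1-k})$ and is not enough when the low frequency $k_1$ approaches $k$; for spatial $\alpha$ the paper still treats this as a null form $2^{-k}\mathcal N(A_{<k},A_k)$ via Proposition~\ref{prop:nullform1}, reserving the Strichartz argument for $\alpha = 0$. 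A ``standard low-high product estimate'' without the null form would again run into the same divergence at small angles.
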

This proposition is proved in the next section. We remark that 
by appropriately choosing the envelope $c$, this implies that
\begin{equation}\label{FinN1}
\|F\|_{N_c[I]} \lesssim  \epsilon \|A_j\|_{S^1_c[I]},
\end{equation} 
as well as 
\begin{equation}\label{FinN2}
\|F\|_{\ell^1 N[I]} \lesssim  \epsilon \|A_j\|_{S^1[I]},
\end{equation} 

We now turn our attention to the linear equation \eqref{para}.  In order to
uncouple variables it will be useful to also consider the more general
frequency localized equation:
\begin{equation}\label{parab}
\Box B_{j,k}  + 2\P [A_{\alpha,<k}, \partial^\alpha B_{j,k}] = G_{j,k}.
\end{equation}

\begin{proposition} \label{p:para} Assume that $A$ is a solution to
  the Yang-Mills equation in Coulomb gauge in an interval $I$, which
  satisfies \eqref{small-s}.  Then for the equation
 \eqref{parab} we  have the following linear estimate:
\begin{equation}
\|B_{j,k}\|_{S^1[I]} \lesssim ( \|G_{j,k}\|_{N[I]} + \| B_{j,k}[0]\|_{\H}).
\end{equation} 
\end{proposition}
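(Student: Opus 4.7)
The proof of Proposition~\ref{p:para} should be a relatively short deduction from the main parametrix construction, since the heavy lifting is done elsewhere. The plan is to reduce the problem to Theorem~\ref{t:para} (the parametrix existence theorem mentioned in the paper outline for Section~\ref{s:para}), and then close via a Neumann series and interval localization.

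\textbf{Setup and extension.} First I would use Proposition~\ref{p:intervals} to pass from the interval $I$ to the whole space-time. Choose an extension $\tilde G_{j,k}$ of $G_{j,k}$ outside $I$ with $\|\tilde G_{j,k}\|_N \lesssim \|G_{j,k}\|_{N[I]}$, and note that the paradifferential operator $\Box_{p,k} := \Box + 2\mathbf{P}[A_{\alpha,<k}, \partial^\alpha \,\cdot\,]$ depends on $A_{<k}$, which is globally defined in $S^1$ via the background assumption \eqref{small-s}. Thus the background coefficients are under control on all of $\R^{4+1}$, not just on $I$.

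\textbf{Applying the parametrix.} Next, I would invoke Theorem~\ref{t:para}, which provides two objects: (a) a homogeneous solution operator $\mathcal{W}_k$ mapping $\H$-data at $t=0$ to an approximate solution of $\Box_{p,k} u = 0$, and (b) an inhomogeneous parametrix $\mathcal{K}_k$ mapping $N$-functions to approximate solutions of $\Box_{p,k} u = f$, both built via the renormalization symbol $O(t,x,\xi)$ from \eqref{psi-def}. The theorem supplies:
\begin{align*}
\|\mathcal{W}_k(B_{j,k}[0])\|_{S^1} &\lesssim \|B_{j,k}[0]\|_\H, \qquad
\|\mathcal{K}_k \tilde G_{j,k}\|_{S^1} \lesssim \|\tilde G_{j,k}\|_N, \\
\|\Box_{p,k} \mathcal{W}_k(B_{j,k}[0]) \|_N &\lesssim \epsilon \|B_{j,k}[0]\|_\H, \qquad
\|(\Box_{p,k} \mathcal{K}_k - I) \tilde G_{j,k}\|_N \lesssim \epsilon \|\tilde G_{j,k}\|_N,
\end{align*}
with matching bounds on the initial data trace of $\mathcal{K}_k \tilde G_{j,k}$. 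These are exactly the kind of estimates built in Sections~\ref{s:l2}--\ref{s:error}.

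\textbf{Neumann iteration.} With the small error bound at hand, I would set up a Neumann series. Define the first approximation
\[
B^{(0)}_{j,k} = \mathcal{W}_k(B_{j,k}[0]) + \mathcal{K}_k \tilde G_{j,k},
\]
which satisfies $\Box_{p,k} B^{(0)}_{j,k} = \tilde G_{j,k} + E_0$ with $\|E_0\|_N \lesssim \epsilon(\|B_{j,k}[0]\|_\H + \|\tilde G_{j,k}\|_N)$ and the correct initial data up to a small $\H$ error of size $\epsilon(\|B_{j,k}[0]\|_\H + \|\tilde G_{j,k}\|_N)$. Iterating, with each step gaining a factor of $\epsilon$, yields an exact solution $B_{j,k}$ on $\R^{4+1}$ with
\[
\|B_{j,k}\|_{S^1} \lesssim \|B_{j,k}[0]\|_\H + \|\tilde G_{j,k}\|_N.
\]
Restriction to $I$ and the definition of $S^1[I]$ complete the proof. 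Uniqueness on $I$ then follows by taking the difference of two solutions with the same data and source, applying the same parametrix estimate with $G \equiv 0$, and invoking smallness of $\epsilon$.

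\textbf{Main obstacle.} The genuine difficulty is of course \emph{not} in this deduction but in establishing Theorem~\ref{t:para}, i.e.\ proving that the renormalization defined via the continuous gauge construction \eqref{psi-def} is bounded on $S^1$ and $N$ (in all their substructures: Strichartz, null frame, $X^{s,b}$, $Z$) and that the commutator of $\Box_{p,k}$ with this renormalization is perturbative. Within the proof of Proposition~\ref{p:para} itself, the only subtle point is ensuring that the error bound in Theorem~\ref{t:para} is truly linear in $\epsilon$ times the $N$-norm of the source (with no loss in frequency envelopes), so that the Neumann series closes in $S^1_k$; this requires that the parametrix bounds be compatible with the admissible frequency envelope framework from the \textbf{Frequency envelopes} discussion.
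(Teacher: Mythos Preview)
Your overall strategy is correct and matches the paper's: reduce to the parametrix bounds in Theorem~\ref{t:para}, then close by iteration. The paper phrases this as a chain of equivalent reformulations (Propositions~\ref{p:para-uncouple}, \ref{p:para-noloc}, \ref{p:para-app}), with the iteration hidden in the equivalence between exact solvability and the approximate-solution statement of Proposition~\ref{p:para-app}.

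There is, however, one genuine gap in your write-up. You define $\Box_{p,k} = \Box + 2\mathbf{P}[A_{\alpha,<k},\partial^\alpha\,\cdot\,]$ \emph{with} the Leray projection $\mathbf{P}$, and then invoke Theorem~\ref{t:para}. But Theorem~\ref{t:para} is stated for the \emph{scalar} paradifferential operator $\Box^p_{A_{<0}} = \Box + 2\,ad(A_{\alpha,<0})\partial^\alpha$, with no projection; the parametrix is built component by component and does not see the coupling induced by $\mathbf{P}$. So as written your invocation does not apply. The fix is the paper's Proposition~\ref{p:para-uncouple}: first observe that
\[
(I-\mathbf{P})[A_{\alpha,<k},\partial^\alpha B_{j,k}]
= \Delta^{-1}\partial_j\,\partial_l[A_{\alpha,<k},\partial^\alpha B_{l,k}]
= \Delta^{-1}\partial_j\,[\partial_l A_{\alpha,<k},\partial^\alpha B_{l,k}],
\]
where the last step uses $\partial_l B_{l,k}=0$. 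This has an extra derivative on the low-frequency factor, hence is a genuine null form controlled by \eqref{null-main}, and can be absorbed into the right-hand side. Only after peeling off $\mathbf{P}$ in this way does Theorem~\ref{t:para} apply. You should insert this step before setting up the Neumann series.

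A minor remark: your closing concern about frequency envelopes is misplaced here. Proposition~\ref{p:para} is a single-frequency statement (fixed $k$), and Theorem~\ref{t:para} is likewise stated at a single frequency; no envelope summation enters this particular deduction. The envelope bookkeeping happens one level up, in the proof of \eqref{Sc}.
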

This result is the key point of the paper. Its proof is closed in Section~\ref{s:para},
using the paradifferential parametrix in Theorem~\ref{t:para}. However, the proof 
of Theorem~\ref{t:para} requires all the subsequent sections of the paper.

The two bounds above suffice in order to close the a-priori bounds in $S^1$ and $S^N$,
including frequency envelope bounds. In order to compare different solutions, 
we need to work with the linearized equation  \eqref{ym-cg-lin}-\eqref{b0}-\eqref{dtb0}.

\begin{proposition}\label{p:linearize}
  Suppose that $A$ is a solution to the Yang-Mills equation in Coulomb
  gauge in an interval $I$, which satisfies \eqref{small-s}.  Then the
  equation \eqref{ym-cg-lin} is well-posed in $\H^s$ for $s < 1$,
  close to $1$, in the time interval $I$.
\end{proposition}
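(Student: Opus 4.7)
The plan is to adapt the nonlinear a priori estimate to the sub-critical regularity level $\dot\H^s$ with $s<1$ close to $1$. To this end I would introduce sub-critical analogs $\dot S^s[I]$ and $\dot N^{s-1}[I]$ of $S^1[I]$ and $N[I]$, weighting the frequency-$k$ piece by $2^{(s-1)k}$ in the $\ell^2$ Besov sum. Since the paradifferential parametrix bound of Proposition~\ref{p:para} is proved uniformly in $k$, multiplying through by these weights and square-summing in $k$ immediately yields the sub-critical analog
\[
\|B_j\|_{\dot S^s[I]} \lesssim \|G_j\|_{\dot N^{s-1}[I]} + \|B_j[0]\|_{\dot\H^s}.
\]

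Next, I would recast the linearized equation \eqref{ym-cg-lin} in paradifferential form
\[
\Box B_{j,k} + 2\P[A_{\alpha,<k}, \p^\alpha B_{j,k}] = G_{j,k},
\]
by keeping on the left only the low-high piece of $[A^\alpha,\p B]$ (with $A$ at lower frequency) and sweeping everything else into $G_{j,k}$. Thus $G_{j,k}$ collects the balanced and high-low portions of $[A^\alpha,\p B]$, the paradifferential commutator $[[P_k,A^\alpha_{<k}],\p_\alpha B_j]$, the full $[B^\alpha,\p A_j]$ and $[B^\alpha,\p_j A_\alpha]$ bilinearities, the $[\p_0 A_0, B_j]$ and $[\p_0 B_0, A_j]$ terms, and the cubic terms $[B,[A,A]]$ and $[A,[A,B]]$. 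The elliptic variables $B_0$ and $\p_0 B_0$ at fixed time are supplied by Proposition~\ref{p:data-cg-lin} together with a fixed-point argument for \eqref{b0}-\eqref{dtb0}, yielding control of $(B_0,\p_0 B_0)$ in $\dot\H^s$ pointwise in time, with norm $\lesssim \epsilon\|B_j\|_{\dot\H^s}$.

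The heart of the matter is a bilinear estimate
\[
\|G\|_{\dot N^{s-1}[I]} \lesssim \epsilon \|B\|_{\dot S^s[I]},
\]
obtained by revisiting each bilinear and trilinear bound used in the proof of Proposition~\ref{p:pert} and verifying that it extends continuously from the symmetric pairing $S^1[I]\times S^1[I]\to N[I]$ to the asymmetric one $S^1[I]\times\dot S^s[I]\to\dot N^{s-1}[I]$. Most such extensions are immediate from the frequency-local bilinear structure and interpolation between $\dot H^1$ and $\dot H^s$. The hard part will be the trilinear term arising from $[\p_0 B_0,A_j]$ through the elliptic relation \eqref{dtb0}, which encodes the second null condition and is handled by Lemma~\ref{l:tri}; I would verify that the proof of Lemma~\ref{l:tri} tolerates the replacement of one of its three inputs by a sub-critical $\dot S^s$ factor. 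This should amount to a modest modification since the null structure is frequency-local, but the paradifferential reiteration used to extract it must be redone consistently at the mixed regularity level, and this is the most delicate step.

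Combining the bilinear bound with the sub-critical parametrix estimate yields
\[
\|B\|_{\dot S^s[I]} \lesssim \|B[0]\|_{\dot\H^s} + \epsilon \|B\|_{\dot S^s[I]},
\]
and smallness of $\epsilon$ absorbs the second term on the right. Uniqueness follows by applying this bound to the difference of two solutions, since \eqref{ym-cg-lin} is linear in $B$. For existence, I would approximate $\dot\H^s$ data by regular data, for which local-in-time existence of the linearized flow is standard by energy methods, and then use the a priori bound uniformly on $I$ to pass to the limit. Continuous dependence in $\dot\H^s$ follows from the same linear estimate applied to differences of solutions.
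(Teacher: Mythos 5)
Your overall strategy matches the paper's: recast \eqref{ym-cg-lin} in paradifferential form, derive the subcritical parametrix bound by noting that Proposition~\ref{p:para} is proved uniformly in the frequency $k$ so it immediately upgrades to $\dot S^s \to \dot N^{s-1}$, estimate the perturbative right-hand side by extending the bilinear estimates of Proposition~\ref{p:pert} to mixed regularity $S^1 \times \dot S^s \to \dot N^{s-1}$ (this is Proposition~\ref{p:pert-lin}, and as you say the frequency-envelope technology makes it essentially automatic), and invoke a trilinear reiteration estimate for the one genuinely hard piece.

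However, you misidentify which piece is hard. The term $[\partial_0 B_0, A_j]$ is not the obstruction: it is handled exactly as the corresponding Strichartz term $[\partial_0 A_0, A_j]$ in Proposition~\ref{p:pert} (Case~3), by placing $\partial_0 B_0$ in $L^2 \dot H^{s-\frac12}$ via Proposition~\ref{p:box-lin} and pairing it against $L^2 L^6$ Strichartz for $A_j$. The genuinely problematic term is the \emph{low--high} interaction $[B_{\alpha,<k}, \partial^\alpha A_k]$ with $B$ at low frequency — the term the paper keeps explicitly on the right-hand side of \eqref{para-lin} and treats separately in Proposition~\ref{p:tri} — and specifically its $\mathcal H^*$ (both-inputs-low-modulation) portion. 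You sweep this entire term into the perturbative $G$, and then claim "most such extensions are immediate"; but for this term no pure bilinear estimate holds uniformly in the frequency gap, which is precisely why the paper does not subsume it into $G_k$. Its bound requires that $B$ solve the linearized system: one must reiterate both the wave equation for $B_j$ and the elliptic equation for $B_0$, combine them, and extract the second null form cancellation (Lemma~\ref{l:tri}, which is already stated at the mixed regularity $S^s \times S^1 \to N^{s-1}$, so no modification of it is needed). So while your outline is structurally sound, the location of the difficulty — and the reason the term must be kept out of $G$ — is misplaced, and the claim that most of $G$'s estimate is a routine interpolation therefore rests on an incorrect decomposition.
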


To further clarify this last result,  we rewrite the equation \eqref{ym-cg-lin}
 in a paradifferential form,
\begin{equation}\label{para-lin}
\Box B_k + \P [A_{\alpha, <k} \partial^\alpha B_k] = \P [B_{\alpha,<k}, \partial^\alpha A_k] + G_k .
\end{equation}
The term $G_k$ plays the same role as $F_k$ in the original equation. Precisely,
we have:

\begin{proposition} \label{p:pert-lin} Assume that $A$ is a solution to
  the Yang-Mills equation in Coulomb gauge in an interval $I$, which
  satisfies \eqref{small-s}.  Then for $s \leq 1$, close to $1$ we have
\begin{equation}
\|G_j\|_{N^{s-1}} \lesssim \epsilon \|B_j\|_{S^s}.
\end{equation} 
\end{proposition}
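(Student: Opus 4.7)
The plan is to mimic the proof of Proposition~\ref{p:pert}, transferring each bilinear/trilinear estimate from the symmetric pairing $S^1 \times S^1 \to N$ to the asymmetric pairing $S^1 \times S^s \to N^{s-1}$, exploiting that $s$ is close to $1$ so that every estimate has a small parameter of room. I would first enumerate the terms appearing in $G_k$. Starting from the paradifferential form \eqref{para-lin}, $G_k$ collects exactly: (i) the high--high and high--low portions of $\P[A^\alpha,\partial_j B_\alpha]$, $\P[A^\alpha,\partial_\alpha B_j]$, together with associated commutators $[[P_k,A^\alpha_{<k}],\partial_\alpha B]$; (ii) the symmetric pieces involving $B$ as the low factor in $\P[B^\alpha,\partial_\alpha A_j]$ minus the extracted paradifferential term; (iii) the purely quadratic/elliptic terms $\P[\partial_0 A_0,B_j]$, $\P[\partial_0 B_0,A_j]$; and (iv) the cubic terms $\P[B^\alpha,[A_\alpha,A_j]]$ and $\P[A^\alpha,[A_\alpha,B_j]]$.

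For (i) and (ii) I would invoke the same bilinear $S\times S \to N$ null-form estimates used to prove Proposition~\ref{p:pert}, but reweighted so that one factor $A$ is measured by $2^{k_1}\|P_{k_1}A\|_{S}\lesssim \epsilon$ and the other by $2^{sk_2}\|P_{k_2}B\|_{S}\lesssim \|B\|_{S^s}$, with the output measured at regularity $s-1$. Since these bilinear estimates are proved in \cite{MKG} (and revisited in Section~\ref{s:pert}) with strict gains in the high--high and unbalanced high--low regimes, the exponent can be freely shifted by a small amount $|s-1|$ and the summation still converges; after an $\ell^2$ dyadic sum in $k$, this yields $\|G^{(i),(ii)}_k\|_{N^{s-1}}\lesssim \epsilon \|B\|_{S^s}$. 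The null structures are exactly the same as in \eqref{ym-cg}, since linearization does not affect them; the extraction of the paradifferential piece is precisely what one must do in any case for the low--high interaction, so removing it is cost-free.

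For (iii), I would control $\partial_0 A_0$ and $\partial_0 B_0$ by the elliptic equations \eqref{a0}, \eqref{dta0}, \eqref{b0}, \eqref{dtb0}, using the spacetime analogues of Propositions~\ref{p:data-cg} and~\ref{p:data-cg-lin}: $A_0,\partial_0 A_0$ lie in a $Y^1$-type space controlled by $\epsilon$, while $B_0,\partial_0 B_0$ lie in the $s$-shifted analogue controlled by $\|B\|_{S^s}$. These terms are then estimated by the analogous product bounds in \cite{MKG}. For (iv), the $\epsilon^2$ smallness of $A$ in $S^1$ absorbs one $\epsilon$ and gives a safe cubic bound $\epsilon^2\|B\|_{S^s}$ by iterating the algebra-type product estimate used for cubic terms in Proposition~\ref{p:pert}.

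The main obstacle, as in Proposition~\ref{p:pert} itself, is the portion of the high--high interactions in $\P[A^\alpha,\partial_\alpha B]$ and $\P[B^\alpha,\partial_\alpha A]$ that cannot be estimated by a single bilinear $S^1\times S^s \to N^{s-1}$ bound and which forces one to reiterate the equation and use the second null condition. This is exactly the trilinear structure captured by Lemma~\ref{l:tri}; here I would apply Lemma~\ref{l:tri} with one slot carrying $A$ at $S^1$-regularity (and hence a factor of $\epsilon$) and the other carrying $B$ at $S^s$-regularity, checking that the proof of Lemma~\ref{l:tri} in Section~\ref{s:tri} allows a small scaling shift $|s-1|$ in one of its slots without destroying the dyadic summation. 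Once this is verified, summing over frequencies using $\|A\|_{S^1}\le\epsilon$ and $\|B\|_{S^s}$ yields the claimed bound $\|G\|_{N^{s-1}}\lesssim\epsilon\|B\|_{S^s}$, and Proposition~\ref{p:linearize} then follows by combining this with the linear estimate of Proposition~\ref{p:para} applied at regularity $s$.
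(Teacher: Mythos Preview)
Your first three paragraphs are essentially correct and match the paper's approach: the paper explicitly states that ``the terms in $G_k$ are similar to those in $F_k$, the proof of Proposition~\ref{p:pert-lin} is completely similar'' to that of Proposition~\ref{p:pert}, and indeed one can view Proposition~\ref{p:pert} as the case $s=1$. The six cases in the proof of Proposition~\ref{p:pert} (null forms via Proposition~\ref{prop:nullform1}, Strichartz for the $A_0$ terms, cubic terms via Strichartz) all carry the off-diagonal decay needed to absorb a small shift $|s-1|$ in the exponent, just as you describe.

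However, your final paragraph contains a genuine misconception. There is \emph{no} ``main obstacle'' in Proposition~\ref{p:pert} requiring reiteration or the second null condition: the high--high and high--low portions of $[A_j,\partial_j A_i]$ (and their linearized analogues) are handled directly by the bilinear null-form estimate \eqref{null-main}, with off-diagonal gain, exactly as in Case~2 of the proof of Proposition~\ref{p:pert}. Lemma~\ref{l:tri} is not used anywhere in the proof of Proposition~\ref{p:pert} or Proposition~\ref{p:pert-lin}. The trilinear second-null-condition analysis is needed only for the term $[B_{\alpha,<k},\partial^\alpha A_k]$ that has been \emph{explicitly removed} from $G_k$ and placed on the right-hand side of \eqref{para-lin}; that term is handled separately by Proposition~\ref{p:tri}. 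You have confused the content of $G_k$ with this extracted term. Relatedly, your last sentence is incomplete: Proposition~\ref{p:linearize} follows from Propositions~\ref{p:para}, \ref{p:pert-lin}, \emph{and} \ref{p:tri}, not just the first two.
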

This result is proved in the next section. We remark that the range of $s$ 
depends on the constant $\delta$ in the estimate \eqref{null-main}
in the next section, which is from \cite{MKG}. We expect the correct range here to be $s > \frac12$.

The new term $[B_{\alpha,<k}, \partial^\alpha A_k] $ in
\eqref{para-lin} does not have a counterpart in the previous
argument. This is the term which is responsible for disallowing case
$s = 1$ in Proposition~\ref{p:para}, and ultimately for the failure of
the Lipschitz dependence of the solution on the initial data in the
strong topology $\H$. We will estimate this in a more roundabout
fashion, proving the following statement:

\begin{proposition}\label{p:tri}
  Suppose that $B \in S^s$ solves the linearized equation
  \eqref{ym-cg-lin} in a time interval $I$, around a YM-CG solution
  $A$ which satisfies \eqref{small-s}.  Then for $s < 1$, close to $1$
  we have the estimate
\begin{equation}
\|[B_{\alpha,<k}, \partial^\alpha C_k]\|_{N^{s-1}} \lesssim \epsilon \|B\|_{S^s}    \|C_k\|_{S^1}.
\end{equation}
\end{proposition}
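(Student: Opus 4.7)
The key obstruction to a direct bilinear estimate for $[B_{\alpha,<k}, \partial^{\alpha} C_{k}]$ is that neither $B$ nor $C_{k}$ comes with a small factor, so the $\epsilon$ on the right-hand side must be manufactured out of the only small quantity available, $\|A\|_{S^{1}} \leq \epsilon$. Since $A$ does not appear explicitly on the left, it must be brought in either via the Coulomb constraints \eqref{b0}, \eqref{dtb0} for $B_{0}, \partial_{t}B_{0}$, or via the linearized equation \eqref{ym-cg-lin} for $B_{j}$.

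The first step is to rewrite
\[
[B^{\alpha}, \partial_{\alpha} C_{k}] = \partial_{\alpha}[B^{\alpha}, C_{k}] + [\partial_{t}B_{0}, C_{k}],
\]
which follows from the Leibniz rule combined with the Coulomb identity $\partial_{\alpha}B^{\alpha} = -\partial_{t}B_{0}$ (using $\partial_{j}B_{j}=0$). The commutator term $[\partial_{t}B_{0}, C_{k}]$ is already essentially trilinear: by \eqref{dtb0} we have $\partial_{t}B_{0} = \Delta^{-1}\partial_{t}\partial_{j}([B_{0}, A_{j}] + [A_{0}, B_{j}])$, and the spacetime analog of Proposition~\ref{p:data-cg-lin} yields $\|\partial_{t}B_{0}\|_{\dot H^{s-1}} \lesssim \epsilon\|B\|_{S^{s}}$. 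A standard product estimate of $Y^{1}\cdot S^{1}$ type, as in \cite{MKG}, then closes this contribution.

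It remains to estimate $\partial_{\alpha}[B^{\alpha}, C_{k}]$. Splitting into time and space derivatives gives $-\partial_{t}[B_{0}, C_{k}] + \partial_{j}[B^{j}, C_{k}]$. The $B_{0}$ piece is again handled via \eqref{b0}, which expresses $B_{0}$ as $\Delta_{A}^{-1}$ applied to terms bilinear in $(A,B)$, producing the required $\epsilon$; the outer $\partial_{t}$ is absorbed by duality against $N^{*(1-s)}$. The main term is $\partial_{j}[B^{j}, C_{k}]$. Here I would substitute for $B^{j}$ using the linearized paradifferential equation \eqref{para-lin}: write $B^{j} = B^{j}_{\mathrm{par}} + B^{j}_{\mathrm{nl}}$, where $B^{j}_{\mathrm{nl}}$ is the contribution of the right-hand side (bilinear in $(A,B)$, providing $\epsilon$ directly through the perturbative estimate analogous to Proposition~\ref{p:pert-lin}), and $B^{j}_{\mathrm{par}}$ is the homogeneous paradifferential part. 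The latter is represented via the parametrix $\mathrm{Ad}(O)$ from Theorem~\ref{t:para}, with $O = I + O(\epsilon)$. Inserting this representation reduces $\partial_{j}[B^{j}_{\mathrm{par}}, C_{k}]$ to a trilinear expression in $(A, B^{\mathrm{free}}, C_{k})$ of the precise form controlled by the second-null-condition bound in Lemma~\ref{l:tri}.

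\textbf{Main obstacle.} The hard part is the final step, showing that the conjugation by $\mathrm{Ad}(O)$ together with the outer divergence $\partial_{j}$ produces an expression that genuinely matches the trilinear null structure handled by Lemma~\ref{l:tri}, rather than a residual bilinear term of the original obstructing type. This will require the decomposability and null-frame symbol bounds for $O$ from Sections~\ref{s:decomp}--\ref{s:l2}, as well as a careful tracking of how the Coulomb-induced cancellation $\partial_{j}B^{j}=0$ interacts with the paradifferential truncation at frequency $k$. This is precisely the step that fails at $s=1$ (with a logarithmic loss in the frequency summation in the parametrix error), forcing the range $s<1$ and encoding the expected failure of Lipschitz dependence at the scale-invariant level.
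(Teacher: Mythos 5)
Your proposal diverges from the paper's route at the very first step, and the divergence introduces a gap that the outline does not close.

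\textbf{What the paper actually does.} The paper never performs the Coulomb integration by parts $[B^{\alpha},\partial_{\alpha}C_{k}] = \partial_{\alpha}[B^{\alpha},C_{k}]+[\partial_{t}B_{0},C_{k}]$. Instead it decomposes by modulation:
\[
[B_{\alpha,<k}, \partial^\alpha C_k] = (I - \H^*) [B_{\alpha,<k}, \partial^\alpha C_k] + \H^*  [B_{\alpha,<k}, \partial^\alpha C_k].
\]
In the $(I-\H^{*})$ piece, $B_{j}$ and $B_{0}$ can be separated: $B_{j}$ is estimated as an $S$-function using the spatial Coulomb null form and \eqref{ssn}, while $B_{0}$ is estimated through the elliptic $L^{2}\dot H^{s+\frac12}$ bound of Proposition~\ref{p:box-lin} and \eqref{hsn}. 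The hard $\H^{*}$ piece is precisely Lemma~\ref{l:tri}; its proof (Section~\ref{s:tri}) reiterates the equations for $\Box B_{j}$ and $\Delta B_{0}$ directly, places most of the output in $Z$-type spaces using \eqref{zsn}, \eqref{zsn-ell}, and for the one residual high-high low-modulation case exploits a genuine algebraic cancellation between the $\Box^{-1}[A_{j},\partial_{\alpha}A_{j}]$ and $\Delta^{-1}[A_{j},\partial_{0}A_{j}]$ contributions (the second null structure). No parametrix enters this argument.

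\textbf{The gap in your proposal.} Your final step — ``Inserting this representation reduces $\partial_{j}[B^{j}_{\mathrm{par}},C_{k}]$ to a trilinear expression in $(A,B^{\mathrm{free}},C_{k})$ of the precise form controlled by ... Lemma~\ref{l:tri}'' — is circular. Lemma~\ref{l:tri} bounds $\H^{*}[B_{\alpha,<k},\partial^{\alpha}C_{k}]$ for $B$ solving the linearized system, and is exactly the missing hard ingredient that Proposition~\ref{p:tri} needs: it cannot be invoked as a known black box, and the expression $\partial_{j}[\mathrm{Ad}(O)B^{\mathrm{free}},C_{k}]$ is in any case not of the form the lemma controls. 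Separately, the splitting $B^{j}=B^{j}_{\mathrm{par}}+B^{j}_{\mathrm{nl}}$ through \eqref{para-lin} does not produce an $\epsilon$ for $B^{j}_{\mathrm{nl}}$ ``through the perturbative estimate analogous to Proposition~\ref{p:pert-lin},'' because the forcing $G_{j,k}$ in the paradifferential equation for $B$ contains the non-perturbative term $\P[B_{\alpha,<k},\partial^{\alpha}A_{k}]$ — the very object Proposition~\ref{p:tri} is being invoked to bound. Finally, treating $\mathrm{Ad}(O)$ as ``$I+O(\epsilon)$'' is not a valid reduction: $O$ is the non-perturbative renormalization symbol, and its contribution cannot be absorbed as a small correction without redoing the full decomposability/null-frame analysis of Sections~\ref{s:decomp}--\ref{s:l2}. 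The proposal therefore identifies the right obstruction (the $\epsilon$ must come from the equations for $B_{0}$ and $B_{j}$) and the right heuristic for the loss at $s=1$, but does not provide the mechanism — modulation splitting to isolate $\H^{*}$, then direct reiteration of $\Box B_{j}$ and $\Delta B_{0}$ with the second null cancellation — that actually closes the estimate.
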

This Proposition is more delicate than the previous proposition, as it 
requires a fine trilinear analysis based on reiterating the linearized equation. Its proof
is also in the next section, modulo  the most difficult
case in Lemma~\ref{l:tri}, which is relegated to Section~\ref{s:tri}. 

The result in Proposition~\ref{p:linearize} is a direct consequence of
Proposition~\ref{p:para}, Proposition~\ref{p:pert-lin} and
Proposition~\ref{p:tri}. We now turn our attention to
Theorem~\ref{t:main-sn}.

\begin{proof}[Proof of Theorem~\ref{t:main-sn}]
  Here we show that Theorem~\ref{t:main-sn} follows from
  Propositions~\ref{p:pert}\ref{p:para},\ref{p:linearize}. In addition
  to these Propositions, we will also take it for granted that for large
  $N$ (e.g. $N \geq 3$) the Yang-Mills equation is locally well-posed
  in $\H^N$, with smooth dependence on the initial data; at least at
  small energies this is a straightforward perturbative result, based
  purely on energy estimates. We carry this out in several steps.
\medskip 

\step{1}{A-priori bounds for regular data} 
 Here we consider regular $\H^N$ solutions in a time interval $I = [0,T]$, and which 
satisfy the smallness condition  
\begin{equation}\label{apriori-smallS}
\| A_x\|_{S^1[I]} \leq \epsilon_0 \ll 1.
\end{equation}
Let $c$ be an admissible $\H$ frequency envelope for the initial data. Then we claim 
that $c$ is also an $S^1$ frequency envelope for the solution, 
and, in addition, we have the bound
\begin{equation}\label{Sc} 
\| A_x\|_{S^1_c} \lesssim \| A_x[0]\|_{\H_c}.
\end{equation}
We remark that, as a consequence of this, we have in particular the bounds
\begin{equation}\label{global-bounds}
\|  A_x\|_{S^1} \lesssim \| A_x[0]\|_{\H}, \qquad \|  A_x\|_{S^N} \lesssim \| A_x[0]\|_{\H^N}.
\end{equation}

Assume first that we already know that $A_x \in S_c$. Then \eqref{Sc} is obtained by successively 
applying Propositions~\ref{p:pert}, \ref{p:para} in the equation \eqref{para}. Without knowing that 
$A_x \in S_c$, let $d$ be an admissible frequency envelope for $A_x$ in $S^1$. Then for 
$\delta > 0$ we have $A_x \in S^1_{c+\delta d}$. Then we have \eqref{Sc} with $c$ replaced by $c + \delta d$,
and it suffices to let $\delta$ to zero to obtain again \eqref{Sc}.

\medskip

\step{2}{Global solutions for regular data}  Here we start with regular data
$(A_j(0)\, \partial_t A_j(0)) \in \H^N$ which is small in the energy
norm, i.e. it satisfies \eqref{small-data}.  Then the solution exists in $\H^N$ on some nonempty 
time interval $[0,T)$. We claim that the solution is global, $T= \infty$, 
and that it satisfies the bound
\begin{equation}\label{small-sln}
\| A_j\|_{S^1} \leq C \epsilon,
\end{equation}
with a fixed universal constant $C$.

This is done using a  time continuity argument. Let $\mathcal T$
denote the set of all times $T$ for which a classical (i.e. $\H^N$ solution )
exists in $[0,T]$ which satisfies \eqref{small-sln}. We will prove that $\mathcal T$
is both open and closed, and thus must be equal to $\R^+$.

\medskip 

a) $\mathcal T$ is closed.  Indeed, suppose that $[0,T_0) \subset
\mathcal T$. By \eqref{global-bounds} we have a uniform bound
\[
\| A_j\|_{S^N[0,T]} \lesssim \| A_j[0]\|_{\H^N}.
\]
Then, in view of  the Lipschitz dependence for classical solutions,
the solution $A_j$ extends to time $T_0$ (and indeed, past it) as a
classical solution. By a scaling argument, see e.g. \cite{wm}, the
$S^1[I]$ norm of classical solutions depends continuously on the
interval $I$.  Thus the bound \eqref{small-sln} at time $T_0$ follows,
so $T_0 \in \mathcal T$.

\medskip

b) $\mathcal T$ is open. Let $T \in \mathcal T$. Then $A_j[T] \in
H^N$, so we can continue the solution beyond time $T$.  It remains to
show that the bound \eqref{small-sln} persists.  Using again the
continuous dependence of the $S^1[I]$ norm of classical solutions on
the interval $I$, it suffices to prove \eqref{small-sln} this under
a bootstrap assumption
\begin{equation}\label{sln-boot}
\| A_j\|_{S^1} \leq 2 C \epsilon,
\end{equation}
with a large universal constant $C$. But this again follows from \eqref{global-bounds}
in Step 1.

\bigskip
 
\step{3}{Weak Lipschitz dependence for regular solutions} 
Here we assert that for any two small data global regular solutions we have the  
bound
\begin{equation} \label{lip}
\| A_j -\tilde A_j\|_{S^s} \lesssim \|A_j[0]-\tilde A_j[0]\|_{\H^{s}}.
\end{equation}
provided $s< 1$ is close to $1$. This is a direct consequence of the 
result in Proposition~\ref{p:linearize}.

\bigskip \step{4}{Rough data solutions} The continuous extension of
the flow map to rough data for solutions which satisfy
\eqref{small-sln}, using the $\H \cap \dot H^s$ topology, follows in a
standard manner from two properties of small data solutions: 

\begin{itemize}
\item  The frequency envelope bounds \eqref{Sc}.
\item The Lipschitz dependence in a weaker topology \eqref{lip}.
\end{itemize}

Indeed, consider some small energy data $A_x[0] \in \H$. Then for any
$A_x^{(n)}$ are regular solutions, whose data $A_x^{(n)}[0]$ converge
to $A_x[0] \in \H$ in the sense that
\[
\| A_x^{(n)}[0] - A_x[0]\|_{\H \cap \dot H^s} \to 0.
\]
By \eqref{lip} the limit  $A_x$  of $A_x^{(n)}$ exists in $\dot S^s$.  Further, the relation \eqref{lip}
extends to all solutions constructed in this way.

Favorably choosing $A_x^{(n)}[0]$ so that they have the same $\H$  frequency envelope as 
$A_x[0]$ (e,g. as $A_x^{(n)}[0]=P_{<n}A_x[0]$ )  and applying \eqref{Sc}, it follows that $A_x \in S^1$, and further that \eqref{Sc}
holds for $A_x$.

Finally, to establish the continuity of the data to solution map from
$\H \cap \dot \H^s$ to $S \cap \dot S^s$ we use the previously
established $\dot H^s$ Lipschitz bound for low frequencies, combined
with the uniform smallness of high frequency tails, which is in turn
derived from the frequency envelope bound.

\end{proof}

\section{ Bilinear estimates and perturbative analysis }
\label{s:pert}

The first goal of this section is to review the bilinear null form bounds from \cite{MKG},
which will be repeatedly used in our analysis. Then we use these bounds to 
provide some preliminary characterization of YM solutions which satisfy an a-priori
$S^1$ bound. Finally, we  conclude with a proof of Propositions~\ref{p:pert} and 
~\ref{p:linearize}.

\subsection{Bilinear null form bounds}

We begin with the main bilinear null form estimate, where $\mathcal
N(u, v)$ refers to any expression of the form $\partial_i u\partial_j
v - \partial_j u\partial_i v$. It comes from \cite{MKG}, and
specifically from (131) in Theorem 12.1 there:
\begin{proposition}\label{prop:nullform1}(\cite{MKG})
For any null form $\mathcal N$ we have the following null form estimates:
\begin{equation}\label{null-main}
\| P_k \mathcal N(u_{k_1},v_{k_2})\|_{N} \lesssim 2^k 2^{\delta (k_{min}- k_{max})}\|u_{k_1}\|_{S} \|v_{k_2}\|_{S}
\end{equation} 
\end{proposition}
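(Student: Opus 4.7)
The plan is to follow the argument in \cite{MKG} (Theorem~12.1, equation~(131)); I outline only the essential structure. First I would frequency-localize the inputs to single dyadic blocks at scales $k_1, k_2$ and the output to scale $k$, reducing the analysis to three geometric configurations: high-high to low ($k \leq k_1 \sim k_2$), low-high to high ($k_1 \ll k_2 \sim k$), and its symmetric counterpart. Within each configuration, I would further decompose each of the three pieces into modulation components via $Q_{j_1}, Q_{j_2}, Q_j$.

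Second, I would extract the null structure from the pointwise symbol bound
\[
|\xi_i \eta_j - \xi_j \eta_i| \lesssim |\xi|\,|\eta|\, \angle(\xi,\eta),
\]
combined with the wave-cone geometry, which yields
\[
\angle(\xi_1, \pm \xi_2)^2 \lesssim 2^{j_{\max} - k_{\min}}
\]
whenever the three pieces interact nontrivially on the product of characteristic cones. This turns $\mathcal{N}(u_{k_1}, v_{k_2})$ into a scalar pairing of angularly localized pieces with an explicit angular gain of $2^{(j_{\max} - k_{\min})/2}$ available for each fixed modulation configuration.

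Third, I would pick function spaces adapted to each configuration. In the low-high case the frequency separation itself already provides most of the gain, and one closes using the Strichartz component $S_k^{str}$ paired with $X^{0,\pm 1/2}_\infty$ at high modulation, placing the output in $L^1 L^2$ for small $j$ and in $X_1^{0,-1/2}$ for large $j$. In the balanced high-high case one further decomposes both inputs into angular caps $P^\omega_l$ of diameter $2^l \sim 2^{(j_{\max}-k_1)/2}$ oriented along the null cone, and closes using the plane-wave and null-energy norms built into $S_k^{ang}$ via the angular sector space \eqref{Sl_def}; the modulation $Q_{<k+2l}$ truncation in the definition of $S^{ang}_{k,j}$ is exactly what permits the $\ell^2$ summation over caps $\omega$ to succeed.

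The main obstacle, and the one that dictates the very architecture of $S_k$, is the high-high to low interaction with small output modulation: the two inputs are then nearly parallel, so pure Strichartz and $X^{s,b}$ estimates fail to supply the gain, and one must exploit the radial block decompositions $\mathcal{C}_{k'}(l')$ appearing in \eqref{Sl_def} together with the $L^2$-based plane-wave norms transverse to the null rays. The exponent $\delta>0$ in the final bound emerges as a small surplus after $\ell^2$-summing over angular caps, modulation ranges, and radial blocks; it is not sharp, and any sufficiently small positive value suffices for the purposes of the paper.
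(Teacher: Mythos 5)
The paper does not prove this proposition at all: it is imported wholesale from \cite{MKG} (specifically bound (131) in Theorem~12.1 there, as the paper itself says just before the statement) and used as a black box. There is therefore no in-paper proof to compare your sketch against; you are implicitly reconstructing the argument in the predecessor paper. At that level, your outline captures the standard architecture: dyadic and modulation trichotomy, extraction of an angular gain from the null symbol together with the cone-geometry relation between output modulation and input angle, angular cap decomposition matched to the null frame components of $S_k$, and the radial block norms $\mathcal{C}_{k'}(l')$ in \eqref{Sl_def} for the worst high-high-to-low regime.

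Two caveats are worth registering. First, in the low-high configuration $k_1 \ll k_2 = k$ the claimed bound \eqref{null-main} carries the factor $2^k$, not $2^{k_1}$; the exponent $\delta$ is only a small off-diagonal improvement, not a genuine $2^{k_1 - k}$ gain, and indeed the paper's very next remark and Proposition~\ref{p:ssn} make the point that the stronger $2^{k_1}$ bound \emph{fails} in general and requires excising the $\mathcal{H}^*$ piece. Your phrase that ``the frequency separation itself already provides most of the gain'' glides over this distinction and could mislead a reader into thinking \eqref{null-main} is stronger than it is. Second, the source of the small $\delta$ is left as ``a small surplus after $\ell^2$-summing''; in a complete proof it must be traced to specific angular and Strichartz interpolation gains in each of the three configurations, with different mechanisms in each, and simply asserting it emerges from summation is not by itself an argument. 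Since the statement is an external citation rather than something this paper establishes, both points are forgivable in a sketch, but they are exactly where a full write-up would need to do real work.
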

We remark that, in view of Proposition~\ref{p:intervals}, the same bound holds in any time interval $I$.

Ideally, we would like to improve this bound in the case of
low-high frequency interactions $k_1 < k_2 = k$, and have a $2^{k_1}$
factor instead. Unfortunately that 
does not work in general. However, it does work for the most part.
To describe that we isolate the bad component, namely  $\H^* \mathcal N(u_{k_1},v_{k_2})$.
Here, following \cite{MKG}, if $\mathcal{M}(D_{t,x},D_{t,y})$ is any 
bilinear translation invariant operator then we set:
\begin{align}
  \mathcal{H}^* \mathcal{M}(\phi_{k_1},\psi_{k_2}) \ = \ \sum_{j<k_1} Q_{<j-C}
  \mathcal{M}(Q_{j} \phi_{k_1}, Q_{<j-C}\psi_{k_2} ) \ , \qquad k_1 < k_2-C
\end{align}
We observe that the map $\mathcal H^{*}$ selects the portion of the
bilinear interaction where both the high frequency input and the
output have high modulation.  This case is unfavorable in the high
frequency limit; this is most easily seen using duality to rewrite the
above bound in a trilinear fashion.  We also remark that the
frequency/modulation localization in $\mathcal H^{*}$ fixes the angle
$\theta$ between the two input functions to
\[
\theta \approx 2^{(j-k_1)/2}
\]

A benefit of the null form structure of the nonlinearity is that it provides
an additional gain at small angles in bilinear estimates, which is roughly proportional 
to the angle. We will also need to take advantage of this gain  in our estimates.
For this we introduce a second selection device for bilinear interactions.
Precisely, given two spatial frequencies $\xi$ and $\eta$ we define a partition of unity
\[
1 = \sum_{\theta \ \text{dyadic}} \chi_\theta(\xi,\eta)
\]
where $\chi_\theta(\xi,\eta)$ is a smooth homogeneous cutoff which
selects the region where $\angle(\xi,\eta) \approx \theta$. Then, given
bilinear translation invariant operator $\mathcal{M}(D_{t,x},D_{t,y})$
with symbol ${m}(\tau,\xi,\sigma,\eta)$, we define
$\mathcal{M}^\theta$ as the bilinear translation invariant operator
with symbol ${m}(\tau,\xi,\sigma,\eta) \chi_\theta(\xi,\eta)$. We will
similarly used the notations $\mathcal{M}^{<\theta}$,
$\mathcal{M}^{>\theta}$ with the obvious meanings.

We now return to the promised decomposition of the null form into a
good and a bad part.  For the complement $(I-\H^*) \mathcal N(u_{k_1},v_{k_2})$
we have a good $S$ bound; for $\H^* \mathcal N(u_{k_1},v_{k_2})$, instead, we
use the $Z$ norm as a proxy. The following estimates are contained in
Theorem 12.1, Theorem 12.2 in \cite{MKG}:

\begin{proposition}(\cite{MKG})\label{p:ssn}
For $k_1 < k_2-C$  and any null form $\mathcal N$ we have the following bilinear estimates:

a) $S^1 \times S^1 \to N$ bound:
\begin{equation}\label{ssn}
\| (I-  \H^*) \mathcal N(u_{k_1},v_{k_2})\|_{N} \lesssim 2^{k_1} \|u_{k_1}\|_{S^1} \|u_{k_2}\|_{S^1}.
\end{equation} 
We also have the small angle improvement
\begin{equation}\label{ssn-theta}
\| (I-  \H^*)^{< \theta} \mathcal N(u_{k_1},v_{k_2})\|_{N} \lesssim 2^{k_1} \theta^\frac14 
\|u_{k_1}\|_{S^1} \|u_{k_2}\|_{S^1}.
\end{equation} 

b) $Z \times S^1 \to N$ bound:
\begin{equation}\label{zsn}
\| \H^* \mathcal N(u_{k_1},v_{k_2})\|_{N} \lesssim  2^{k_1} \|u_{k_1}\|_{Z} \|v_{k_2}\|_{S^1}, \qquad k_1 < k_2
\end{equation} 

c) $L^2 \dot H^\frac32 \times S \to N$ bound:
\begin{equation}\label{hsn}
\| (I-  \H^*) (u_{k_1} \cdot \nabla v_{k_2})\|_{N} \lesssim 
 \|u_{k_1}\|_{L^2 \dot H^\frac32} \|u_{k_2}\|_{S^1}.
\end{equation} 

d) $\Box^\frac12 \Delta^{-\frac12} Z \times S \to N$ bound:
\begin{equation}\label{zsn-ell}
\| \H^* (u_{k_1} \cdot \nabla v_{k_2})\|_{N} \lesssim 
 \|u_{k_1}\|_{\Box^\frac12 \Delta^{-\frac12}Z} \|u_{k_2}\|_{S}.
\end{equation} 

\end{proposition}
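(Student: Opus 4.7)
The proof plan is to establish these bilinear bounds in the standard framework for null form estimates on $\R^{4+1}$, following the template used in \cite{MKG}. Throughout we fix $k_1 < k_2 - C$ and, by translation invariance, assume the output frequency is $\sim 2^{k_2}$. I would perform a joint dyadic decomposition in modulations, writing each input and the output as a sum $\sum_j Q_j$ restricted to either the $(+)$ or $(-)$ cone. The null form symbol $\xi_i \eta_j - \xi_j \eta_i$ vanishes when $\xi \parallel \eta$ and satisfies the standard pointwise bound $|\xi||\eta|\sin\angle(\xi,\eta)$, which is what drives all gains below.

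For part (a), the operator $(I-\H^{*})$ excludes the single unfavorable regime where the $2^{k_2}$-frequency input and the output both have modulation $\gtrsim 2^{k_1}$. In all surviving configurations the modulation of the $2^{k_1}$-input dominates, so the null structure, together with the standard geometric identity $\angle^2 \sim 2^{j_{\max}-k_1}$ (where $j_{\max}$ is the largest modulation), converts the expression into $\Box$ applied to one of the factors. This trades the null form gain for a modulation gain, and the result is estimated using $X^{s,b}_1$ and $X^{s,b}_\infty$ pieces of $S$ and the $L^1 L^2 + X^{0,-1/2}_1$ description of $N$. The angular refinement \eqref{ssn-theta} comes from further decomposing the interaction into angular caps $P^\omega_l$ of size $\theta$ and interpolating the null form's linear gain in $\theta$ against the square-summed cap structure built into $S_k^{ang}$; the exponent $\tfrac14$ reflects the interpolation between $\ell^2$ cap summation and the pointwise gain $\theta$.

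For part (b), I would argue by duality against the $N_k^{*}$ norm, i.e.\ against $X_\infty^{0,1/2} \cap L^\infty L^2$. In the $\H^{*}$ regime both the high-frequency input and the output are constrained to modulation $\approx 2^{j}$ with $j < k_1$, which fixes the angle of interaction to $\sim 2^{(j-k_1)/2}$. On each such angular shell I would expand the $\Z_{k_1}$ norm in the $P^\omega_l Q_{k_1+2l}$ $L^1 L^\infty$ atoms, pair with the $N_{k_2}^{*}$ test function in $L^\infty L^1$ after using Bernstein and the null factor $2^{k_1}\cdot 2^{(j-k_1)/2}$, and close via the Cauchy--Schwarz in the angular index. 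Parts (c) and (d) follow the same blueprint with the null factor $\theta$ replaced by the trivial factor $2^{k_2}$: in (c) the extra strength in the $L^2 \dot H^{3/2}$ norm supplies what the null form would have contributed, while in (d) the $\Box^{1/2}\Delta^{-1/2}\Z$ norm encodes the modulation trade-off needed to pair against the $L^\infty L^2$ half of $S$.

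The main technical obstacle is the bookkeeping of the modulation/angle constraints in the $\H^{*}$ cases and the verification that all pieces of the $S_k^{ang}(l)$ norm --- particularly the $\mathcal{C}_{k'}(l')$ block $L^2 L^\infty$ term responsible for the Strichartz gain on radially short boxes --- are compatible with the null frame $PW^\mp_\omega(l)$ pieces used on the low-frequency input. Since the required framework and all component estimates are already assembled in \cite{MKG} (Theorems 12.1, 12.2), the plan reduces to invoking these with the interval-localized versions granted by Proposition~\ref{p:intervals}.
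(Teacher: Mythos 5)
The paper gives no proof of this proposition: it is taken verbatim from Theorems 12.1 and 12.2 of \cite{MKG}, and your proposal correctly ends by falling back on exactly that citation. Since the paper's ``proof'' is the citation, your answer does, in the end, match the paper's route.

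However, the sketch you interpose before the citation misreads the operator $\H^*$, and in a way that would derail the argument if you tried to carry it out. From the definition
$\mathcal{H}^* \mathcal{M}(\phi_{k_1},\psi_{k_2}) = \sum_{j<k_1} Q_{<j-C}\mathcal{M}(Q_{j} \phi_{k_1}, Q_{<j-C}\psi_{k_2})$,
the high-frequency input $\psi_{k_2}$ and the output are both at modulation $<2^{j-C}$ with $j<k_1$ (so \emph{below} $2^{k_1}$), and it is the \emph{low}-frequency input $\phi_{k_1}$ whose modulation $2^j$ dominates. Your description in part (a) --- ``$(I-\H^{*})$ excludes the regime where the $2^{k_2}$-frequency input and the output both have modulation $\gtrsim 2^{k_1}$'' --- inverts this: what is excluded is the regime where those modulations are \emph{small} compared with the low-frequency input's. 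Consequently the statement ``in all surviving configurations the modulation of the $2^{k_1}$-input dominates'' is also reversed; in $(I-\H^*)$ the low-frequency input's modulation is either $\geq 2^{k_1}$ or fails to dominate the other two. Your own paragraph for part (b), where you (essentially correctly) place the $\H^*$ modulations at scale $\lesssim 2^j$ with $j<k_1$ and deduce the angle constraint $\theta\sim 2^{(j-k_1)/2}$, contradicts what you wrote for part (a). If you run the part (a) reasoning as written, the $\Box$ you extract lands on the wrong factor and the trade between angular gain and modulation gain is in the wrong direction. Since the ultimate step in both your proposal and the paper is the citation to \cite{MKG}, the result stands, but the intermediate characterization of $\H^*$ should be corrected before you rely on it elsewhere (as it recurs in the trilinear analysis of Section~\ref{s:tri}).
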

In order to be able to take advantage of the bilinear bounds which use
the $Z$ norm we need to have an additional estimate allowing us to
bound $Z$ norms appropriately.

 To describe the result we need a second
operator $\H_k$, which, following \cite{MKG} is defined as
\begin{align}
  \mathcal{H}_k \mathcal{M}(\phi_{k_1},\psi_{k_2}) \ &= \ \sum_{j<k+C}
  Q_{j} P_k \mathcal{M}(Q_{<j-C}\phi_{k_1}, Q_{<j-C}\psi_{k_2} ) , \qquad k < k_1 = k_2
  \end{align}
Then the $Z$ bounds are as follows, also contained in \cite{MKG}:

\begin{proposition}
For any null form $\mathcal N$ have the following $Z$ bounds:

a) Bound for classical solutions:
\begin{equation}\label{l12-to-z}
 \| \phi_k\|_{Z} \lesssim \|\Box \phi_k\|_{L^1L^2}
\end{equation}

b) High-low interactions:
\begin{equation}\label{hl-to-z}
\| P_k  \mathcal N(u_{k_1},v_{k_2})\|_{\Box Z} \lesssim 2^{k}
2^{-\delta |k_1-k_2|} \|u_{k_1}\|_{S} \|u_{k_2}\|_{S}, 
\qquad k >  k_{max}-C
\end{equation} 
\begin{equation}\label{hl-to-z0}
\| P_k (u_{k_1} \cdot \nabla v_{k_2})\|_{\Delta^\frac12 \Box^\frac12 Z} \lesssim 2^{k_1+k_2}
2^{-\delta |k_1-k_2|} \|u_{k_1}\|_{S} \|u_{k_2}\|_{S}, 
\qquad k >  k_{max}-C
\end{equation} 

c)  High-high-low interactions:
\begin{equation}\label{hh-to-z}
\| (I-\H_k) \mathcal N(u_{k_1},v_{k_2})\|_{\Box Z} \lesssim 2^{k_1} 2^{-\delta|k-k_1|} 
\|u_{k_1}\|_{S} \|u_{k_2}\|_{S}, \qquad k < k_1 = k_2
\end{equation} 
\begin{equation}\label{hh-to-z0}
\| (I-\H_k) (u_{k_1} \cdot \nabla v_{k_2})\|_{\Delta^\frac12 \Box^\frac12 Z} \lesssim 2^{k_1+k_2} 2^{-\delta|k-k_1|} 
\|u_{k_1}\|_{S} \|u_{k_2}\|_{S}, \qquad k < k_1 = k_2
\end{equation}

\end{proposition}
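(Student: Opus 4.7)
The plan is to prove part (a) directly from Bernstein and orthogonality, and then reduce parts (b) and (c) to the bilinear decomposition scheme developed in \cite{MKG}, carefully adapted to the modulation constraint built into the $Z$-norm.

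For part (a), fix an angular scale $2^l$ and a cap $\omega$. On the support of $P_k P^\omega_l Q_{k+2l}$ the symbol of $\Box$ has size $\sim 2^{2k+2l}$, so inverting it produces a gain of $2^{-(2k+2l)}$. A spatial Bernstein inequality on a tube of dimensions $2^k \times (2^{k+l})^3$ converts $L^2_x$ to $L^\infty_x$ at the cost of $2^{(4k+3l)/2}$. Multiplying these two factors and integrating in $L^1_t$ yields
\[
\|P^\omega_l Q_{k+2l} P_k \phi\|_{L^1(L^\infty)} \lesssim 2^{-l/2} \|P^\omega_l Q_{k+2l} P_k \Box\phi\|_{L^1(L^2)},
\]
so the $2^l$ weight in the definition of $Z_k$ is exactly absorbed. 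Squaring and summing in $\omega$ uses Minkowski in $L^1_t$ together with the almost orthogonality of $\{P^\omega_l\}_\omega$ in $L^2_x$, reducing the sum to $\|P_k \Box \phi\|_{L^1 L^2}^2$.

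For the bilinear bounds (b) and (c), the strategy is a joint orthogonality decomposition in angle and modulation that matches the intrinsic constraint $\mathrm{mod}\sim 2^{k_{\mathrm{out}}+2l}$ built into the definition of $Z_{k_{\mathrm{out}}}$. For inputs with spatial frequencies $\xi,\eta$ making angle $\theta$, the quadratic interaction produces output modulation comparable to $\theta^2\cdot 2^{k_{\max}}$; this pins the relevant angular scale to $\theta\sim 2^l$ (case (b), $k_{\max}=k$) or $\theta\sim 2^{(k-k_1+2l)/2}$ (case (c)). The null form $\mathcal N$ contributes an additional factor of $\theta\cdot 2^{k_1+k_2}$; for $u\cdot\nabla v$ the same angular gain is available after removing the part of the interaction where the high-frequency input alone carries the high modulation, which is why the $\Delta^{1/2}\Box^{1/2} Z$ estimates require the $\H^*$ or $\H_k$ projector. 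With this bookkeeping one places the low-frequency input in $L^\infty L^\infty$ via Bernstein from $S^{str}_{k_{\min}}$, while the high-frequency input is controlled by the null frame and $PW^\pm_\omega(l)$ components of $S^{ang}_{k_{\max},k+2l}$, which are tailored precisely to measure functions on tubes at angular scale $2^l$.

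The main obstacle is the high-high case (c), where the output frequency $2^k$ can be arbitrarily smaller than the common input frequency $2^{k_1}=2^{k_2}$, forcing the relevant interactions to concentrate at modulations $\sim 2^{k+2l}$, which can be far above $2^k$. The operator $I-\H_k$ removes exactly the part of the interaction in which both inputs have modulation $\ll 2^{k+2l}$; in the remaining portion at least one input lies at modulation $\gtrsim 2^{k+2l}$, so its $X^{0,1/2}_\infty$ component in $S$ supplies an $L^2 L^2$ factor of size $2^{-(k+2l)/2}$. Combining this with the angular gain from summing the radial or angular decompositions of the $S^{ang}_{k_i,\cdot}$ norms produces the geometric factor $2^{-\delta|k-k_1|}$; the analogous high-low accounting in case (b) yields $2^{-\delta|k_1-k_2|}$. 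The detailed bookkeeping is the content of Theorems~12.1 and~12.2 of \cite{MKG}, whose proofs transfer to the present setting without modification, so we adopt those estimates directly.
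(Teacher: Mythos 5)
Your overall approach mirrors the paper's: the paper gives no proof of this proposition, noting only that \eqref{l12-to-z} is ``a direct consequence of Bernstein's inequality'' (cf.\ the remark around \eqref{B_embed}) and that the bilinear bounds (b)--(c) are ``also contained in \cite{MKG}.'' Your elementary argument for part (a) is correct: the weight count is $2^{(4k+3l)/2}\cdot 2^{-(2k+2l)}=2^{-l/2}$, the Minkowski step ($\ell^2_\omega$ outside $L^1_t$ dominated by $L^1_t$ outside $\ell^2_\omega$) is valid, and dropping the $Q_{k+2l}$ and $P^\omega_l$ projectors in $L^1L^2$ is legitimate because, at spatial frequency $\sim 2^k$, the symbol of $Q_j$ restricted to $\{|\xi|\sim 2^k\}$ is a smooth bump in $\tau$ and hence has $L^1_t$-bounded kernel.

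Two points in the heuristic reasoning for (b)--(c) are inaccurate, however, and worth correcting since they suggest a misunderstanding of the mechanism one is importing from \cite{MKG}. First, the output modulation of a cone--cone interaction at angle $\theta$ is $\sim 2^{k_{\min}}\theta^2$, not $\sim 2^{k_{\max}}\theta^2$: in the notation of the paper, the frequency/modulation localization in $\mathcal H^\ast$ fixes $\theta\approx 2^{(j-k_1)/2}$ with $k_1=k_{\min}$. Your formula happens to coincide with the correct one in the balanced (high--high) case (c), but gives the wrong angular scale in the low--high case (b). Second, the $\H_k$ projector is tied to the high--high frequency configuration, not to the choice of target space $\Delta^{1/2}\Box^{1/2} Z$ versus $\Box Z$: observe that \eqref{hl-to-z0} lives in $\Delta^{1/2}\Box^{1/2}Z$ but carries no projector, while \eqref{hh-to-z} lives in $\Box Z$ and does carry $I-\H_k$. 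The projector removes the low-modulation/low-modulation regime where a $high\times high\to low$ cascade produces no off-diagonal decay in $|k-k_1|$; that issue is present for both $\mathcal N$ and $u\cdot\nabla v$ and is orthogonal to the angular-gain question you raise. Since your final step is in any case to invoke the estimates of \cite{MKG} directly, the proof stands as a citation, but the sketch should be repaired so that the bookkeeping you ``transfer without modification'' is the correct one.
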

 
To better understand how the last two propositions fit together, we
remark that the in the bounds in Proposition~\ref{p:ssn} there is no
off-diagonal decay with respect to the frequency gap $k_1-k_2$. Hence, 
we can only apply it for portions of  $A_x$ which we control in 
$\ell^1 Z$. This is why the off-diagonal decay in \eqref{hl-to-z},  \eqref{hl-to-z0} and 
\eqref{hh-to-z}, \eqref{hh-to-z0} is important.

We further remark that the same estimates in \cite{MKG}
also yield a bound for the remaining bad component of $\mathcal N(u_{k_1}, v_{k_2})$,
namely 
\begin{equation}\label{hh-to-z-bad}
\| \H_k \mathcal N(u_{k_1},v_{k_2})\|_{\Box Z} \lesssim 2^{k_1}  \|u_{k_1}\|_{S} \|u_{k_2}\|_{S}, \qquad k < k_1 = k_2
\end{equation} 
and similarly
\begin{equation}\label{hh-to-z-bad0}
\| \H_k (u_{k_1} \cdot \nabla v_{k_2})\|_{\Delta^\frac12 \Box^\frac12 Z} \lesssim 2^{k_1+k_2}  
\|u_{k_1}\|_{S} \|u_{k_2}\|_{S}, \qquad k < k_1 = k_2
\end{equation}

Unfortunately, these bounds have no off-diagonal decay, so they only lead
to an $\ell^\infty Z$ bound for the corresponding ``bad''part of
$A$. If one attempts to combine this with Proposition~\ref{p:ssn}, we
are left with an unresolved logarithmic divergence. Addressing this issue 
requires the finer trilinear analysis in the last section of the paper, and the use of the second null form.

\subsection{ Characterization of $S^1$ solutions for YM-CG}

While the $S^1$ envelope of a Yang-Mills wave $A$ naturally inherits
the $\ell^2$ dyadic structure from the initial data, one might expect
that the inhomogeneous part of $A$, arising from bilinear or cubic
interactions, might carry a better, $\ell^1$ dyadic summation. This
was indeed the case for the Maxwell-Klein Gordon system in \cite{MKG},
and it allowed us to treat the inhomogeneous part of $A$ in a
perturbative fashion, as well as to use free wave magnetic potentials in the 
parametrix construction. Unfortunately, it is no longer the case here, as
the bilinear self-interactions of $A$ are not perturbative. However,
we are still able to prove $\ell^1$ dyadic summation fully for $A_0$,
and in a partial manner only for the inhomogeneous part of $A_x$. This
will allow us to treat not all but the bulk of the nonlinearity in a
perturbative fashion.  Precisely, we prove the following:

\begin{proposition}\label{prop:refinedbox}
Let $A$ be a solution for the YM-CG in an interval $I$ so that $\|A \|_{S^1} \leq \epsilon$.
Then the following property holds:
\begin{equation}\label{l1-a0}
\| \nabla A_0\|_{\ell^1 L^2 \dot H^\frac12} \lesssim \epsilon.
\end{equation}
Also, for each $0 \leq  b <  \frac{1}{2}$ we have 
\begin{equation}\label{l1-ax}
\|\Box  A_x\|_{\ell^1 X^{b - \frac12, -b}} \lesssim_{\delta} \epsilon
\end{equation}
\end{proposition}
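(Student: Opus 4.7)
The plan is to treat the two bounds in turn. In both cases I start from the relevant equation (elliptic for $A_0$, hyperbolic for $A_x$), isolate the bilinear self-interactions, extract null-form structure from the combination of the Coulomb gauge and the natural antisymmetries, and conclude by invoking the bilinear null-form bounds in Propositions~\ref{prop:nullform1}--\ref{p:ssn}. The factor $2^{\delta(k_{\min}-k_{\max})}$ in \eqref{null-main} provides exactly the off-diagonal decay needed to upgrade $\ell^2$ to $\ell^1$ summability in the output frequency $k$.

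For part (a), I start from \eqref{a0}, and use $\partial_i A_i = 0$ to rewrite $2[A_i, \partial_i A_0] = 2\partial_i[A_i, A_0]$, so that
\[
\Delta A_0 \ = \ [A_i, \partial_0 A_i] - 2\partial_i[A_i, A_0] - [A_i, [A_i, A_0]] \ .
\]
The leading bilinear $[A_i, \partial_0 A_i]$ is then recast using the linearized curvature $\tilde F_{i0} := \partial_i A_0 - \partial_0 A_i$ as $[A_i, \partial_0 A_i] = \partial_i [A_i, A_0] - [A_i, \tilde F_{i0}]$, so the RHS collapses to $-\partial_i[A_i, A_0] - [A_i, \tilde F_{i0}] - [A_i,[A_i,A_0]]$. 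After applying $\Delta^{-1}$, the first summand is a Riesz-type bilinear form depending on $A_0$ itself; since $A_0 = O(\epsilon^2)$ this is iterated and contributes at higher order. The genuine null-form contribution is $-\Delta^{-1}[A_i, \tilde F_{i0}]$: since $A$ is divergence-free, the $\xi^\perp$-symbol of $A(\xi)$ combines with the curl structure of $\tilde F_{i0}(\eta)$ to produce a Klainerman-Machedon $\mathcal N$-type null form in $(\xi,\eta)$, to which \eqref{null-main} applies. Summing the $2^{\delta(k_{\min}-k_{\max})}$ factor in $k$ yields the $\ell^1$ bound on $\|\nabla A_0\|_{L^2 \dot H^{1/2}}$. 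The cubic term is handled perturbatively by Strichartz and the Sobolev embedding $\dot H^1 \hookrightarrow L^4$.

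For part (b), I work directly from \eqref{ym-cg}:
\[
\Box A_j \ = \ \P\bigl([A^\alpha, \partial_j A_\alpha] - 2[A^\alpha, \partial_\alpha A_j] - [\partial_0 A_0, A_j] - [A^\alpha,[A_\alpha, A_j]]\bigr) \ .
\]
The Leray projection $\P$ combined with the antisymmetry in $\alpha$ of the first two terms extracts genuine $\mathcal N$-type null forms from the spatial $[A_i, \partial A_j]$ interactions. The $A_0$ contributions are estimated using part (a) and Strichartz on $A$. I dyadically localize and apply Proposition~\ref{prop:nullform1}, exploiting that $X^{b-1/2,-b}$ for $b<\tfrac12$ is strictly weaker than $X^{0,-1/2} \subset N$ at high modulation, so the bilinear estimate can absorb the tails that would obstruct a direct $\ell^1 N$ bound. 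The $\delta$-gain in \eqref{null-main} then produces $\ell^1$ summability, and the cubic term is trivially perturbative.

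The main obstacle is twofold. First, in part (a), the algebraic rearrangement producing the null form leaves a self-referential term $\partial_i[A_i, A_0]$ that must be iterated into higher-order contributions, and one must check that each iteration preserves the $\ell^1$ bound rather than degrading it; this is essentially by the smallness of $\|A\|_{S^1}$ and the previously secured $A_0$ estimate. Second, in part (b), the high-high-to-low frequency interactions in $[A^\alpha, \partial_j A_\alpha]$, where $k \ll k_1 \approx k_2$, rely essentially on the $\delta$-decay in \eqref{null-main}; this is precisely why we settle for $X^{b-1/2,-b}$ with $b<\tfrac12$ rather than the borderline $X^{0,-1/2}$, since the latter would leave a logarithmic divergence when summing over the larger input frequency, and this obstruction is exactly the one that forces the perturbative/paradifferential split appearing in Proposition~\ref{p:pert}.
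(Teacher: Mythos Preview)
Your proposal has genuine gaps in both parts, and in both cases the paper takes a different and simpler route than you do.

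\textbf{Part (a), the $A_0$ bound.} You over-engineer this. The paper uses no null forms here at all: it simply estimates the forcing $[A_i,\partial_0 A_i]$ directly in $L^2\dot H^{-1/2}$ via H\"older and Strichartz,
\[
\|P_k[A_{j,k_1},\partial_0 A_{j,k_2}]\|_{L^2\dot H^{-1/2}}
\lesssim 2^{-\frac16(k_{\max}-k_{\min})}\||D_x|^{1/6}A_{j,k_1}\|_{L^2L^6}\|\partial_0 A_{j,k_2}\|_{L^\infty L^2},
\]
the $2^{-\frac16(k_{\max}-k_{\min})}$ coming from simple subcriticality, not from any null structure. This already gives the $\ell^1$ summation. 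Your approach has two concrete problems. First, you invoke \eqref{null-main}, which lands in $N$, whereas the elliptic equation requires a bound in $L^2\dot H^{-1/2}$; these norms are not comparable and you give no mechanism to convert. Second, your ``genuine null-form contribution'' $[A_i,\tilde F_{i0}]$ still contains $[A_i,\partial_i A_0]$ (since $\tilde F_{i0}=\partial_i A_0-\partial_0 A_i$), so the rearrangement does not actually isolate a term depending only on $A_x$; the claimed $\mathcal N$-structure here is never established.

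\textbf{Part (b), the $\Box A_x$ bound.} The paper separates the cases $b=0$ and $b>0$ sharply. For $b=0$ it repeats the Strichartz argument from part (a), and explicitly remarks that the null condition is not used. For $b>0$ the argument is substantially deeper and your sketch misses its core. After peeling off $F_k$ (via Proposition~\ref{p:pert}) and the high-modulation output (via $L^2L^\infty$), and then the $(I-\mathcal H^*)$ low-modulation piece (via \eqref{ssn}, \eqref{hsn}), what remains is $\mathcal H^*[A_{k_1,\alpha},\partial^\alpha A_k]$. For this piece the direct bilinear estimate \eqref{null-main} is insufficient: one must \emph{reiterate} the equation for the low-frequency factor $A_{k_1}$ and invoke the $Z$-norm machinery (the good/bad split of $A_x$, the $\ell^1 Z$ bound on $A_x^{\mathrm{good}}$ via \eqref{hl-to-z}, and an interpolation argument on $A_x^{\mathrm{bad}}$; and for $A_0$ an $L^{p'}L^p$ extension of part (a)). Your heuristic that ``$X^{b-1/2,-b}$ is weaker than $N$ at high modulation'' is beside the point: the obstruction in the $\mathcal H^*$ piece lives precisely at \emph{low} output modulation, where $X^{b-1/2,-b}$ is stronger than $N$, and the off-diagonal $\delta$-gain in \eqref{null-main} alone does not close the $\ell^1$ sum there.
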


A related result holds for the linearized equation. There, the dyadic summation is 
not an issue because the bounds for the linearized problem are no longer at scaling
(though they are scale invariant).  Also, the bounds we need for the linearized equation are 
not as refined as those we need for the original equation. We have:

\begin{proposition}\label{p:box-lin}
Let $A$ be a solution for the YM-CG in an interval $I$ so that $\|A \|_{S^1} \leq \epsilon$,
and $B \in S^s$ a solution to the linearized equation, with $\frac12 < s \leq 1$.  
Then the following properties hold:
\begin{equation}\label{b0-hs}
\| \nabla B_0\|_{ L^2 \dot H^{s-\frac12}} \lesssim \|B\|_{S^s}
\end{equation}
\begin{equation}\label{box-bx}
\|\Box  A_x\|_{L^2 \dot H^{s-\frac32}} \lesssim \epsilon  \|B\|_{S^s}
\end{equation}
\end{proposition}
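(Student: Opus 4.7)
The plan is to handle $B_0$ and $\partial_0 B_0$ first via the elliptic equations \eqref{b0}--\eqref{dtb0}, and then feed the resulting bounds into the linearized wave equation \eqref{ym-cg-lin} to obtain \eqref{box-bx}. Both estimates target $L^2_t L^2_x$-based norms, which are far more forgiving than the $N$-norm appearing in Proposition~\ref{p:pert-lin}, so no paradifferential or fine trilinear analysis is required; the standard H\"older and Strichartz estimates drawn from the $S^1$ and $S^s$ structures, combined with the Coulomb gauge and the $Y^1$ control of $A_0$ from Proposition~\ref{p:data-cg}, should suffice.

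For \eqref{b0-hs}, I would use the Coulomb identity $\partial_j A_j = 0$ to rewrite $\Delta_A = \Delta + 2[A_j, \partial_j \cdot] + [A_j,[A_j,\cdot]]$, move the magnetic correction to the right hand side, and invert $\Delta$. Since $\|A\|_{S^1} \leq \epsilon \ll 1$ the correction is absorbed perturbatively, reducing matters to placing
\[
\Delta^{-1}\bigl([B_j,\partial_0 A_j] + [A_j,\partial_0 B_j] + 2[B_j,\partial_j A_0] + \text{cubic}\bigr)
\]
in $L^2 \dot{H}^{s+\frac12}$, which yields \eqref{b0-hs} after a gradient. Each bilinear piece is handled dyadically: in a low-high interaction the low factor goes into a Strichartz space $L^q L^r$ drawn from $S$ or $S^s$, the high factor (carrying the derivative) in $L^\infty L^2$, with Bernstein adjusting to reach the $\dot{H}^{s+\frac12}$ target. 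The $[B_j,\partial_j A_0]$ piece uses $\nabla A_0 \in L^2 \dot{H}^{\frac12}$ from the $Y^1$ norm, and the cubic term is absorbed by Sobolev embedding and the smallness of $A$. The same machinery applied to \eqref{dtb0}, after splitting $\partial_0 \partial_j \Delta^{-1}$ by modulation, gives the analogous bound on $\partial_0 B_0$ which will be needed in the next step.

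For \eqref{box-bx}, I turn to \eqref{ym-cg-lin} and bound each source term in $L^2 \dot{H}^{s-\frac32}$. The quadratic interactions $[A^\alpha,\partial_j B_\alpha]$, $[A^\alpha,\partial_\alpha B_j]$ and $[B^\alpha,\partial_\alpha A_j]$ are estimated dyadically frequency by frequency: in low-high interactions the low factor is placed in $L^2 L^\infty$ (Strichartz from $S^1$ plus Bernstein) against the high-frequency derivative in $L^\infty L^2$; high-low is symmetric; and high-high interactions are controlled by off-diagonal decay of the Strichartz exponents. The $[\partial_0 A_0, B_j]$ term uses the $Y^1$ norm, $[\partial_0 B_0, A_j]$ uses the bound from the previous step, and the cubic terms $[B^\alpha,[A_\alpha,A_j]]$, $[A^\alpha,[A_\alpha,B_j]]$ are handled by H\"older and Sobolev, each contributing an extra factor of $\epsilon$ from the two copies of $A$.

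The main obstacle is preserving dyadic summability as $s \uparrow 1$: at $s=1$ the target $L^2 \dot{H}^{s-\frac32}$ becomes scale invariant and the bilinear bounds at scaling yield only $\ell^2$-summation, which in combination with logarithmic gaps in low-high interactions could diverge. For $s<1$, however, the target sits strictly below scaling, so each dyadic bilinear estimate picks up an off-diagonal decay of exponent comparable to $1-s$ on one side or $s-\frac12$ on the other, rendering the dyadic sums absolutely convergent. Carefully tracking this decay across the various low-high, high-low and high-high configurations is the main bookkeeping task; once done, the Proposition follows by assembling the dyadic bounds and using the smallness $\|A\|_{S^1} \leq \epsilon$.
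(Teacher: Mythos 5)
Your overall plan matches the paper's: the elliptic equations \eqref{b0}--\eqref{dtb0} are solved perturbatively (using $\|A\|_{S^1}\leq\epsilon$ to absorb the magnetic corrections), the right-hand sides are estimated dyadically via Strichartz/energy norms from $S^1$ and $S^s$ plus Sobolev embeddings, and the resulting $B_0,\partial_0 B_0$ bounds are then combined with the same toolkit to estimate $\Box B_x$ from \eqref{ym-cg-lin}. The paper indeed points out that no null condition is needed here. Note that \eqref{box-bx} as printed has a typo (it should read $\Box B_x$, not $\Box A_x$), and you correctly treated it as such.

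Your last paragraph, however, misattributes where the dyadic summability comes from, and on your own account you would lose the endpoint $s=1$ even though the proposition claims $\frac12 < s \leq 1$. The off-diagonal decay in the frequency gap is \emph{not} a consequence of the target sitting below scaling; it is already present at $s=1$ and comes from the flexibility in choosing Strichartz/Sobolev exponents — exactly the mechanism displayed in \eqref{da0-lp}, which produces a $2^{-\frac16(k_{\max}-k_{\min})}$ factor at the scale-invariant level. Indeed, the $b=0$ case of Proposition~\ref{prop:refinedbox} proves the \emph{stronger} $\ell^1$ bound precisely at $s=1$. For the linearized problem you only need $\ell^2$ summability (the targets $L^2\dot H^{s+\frac12}$, $L^2\dot H^{s-\frac32}$ are ordinary Lebesgue--Sobolev spaces), so the $\ell^2\times\ell^2\to\ell^2$ bilinear estimate with the same off-diagonal gain closes uniformly in $\frac12 < s \leq 1$. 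If you restate the summability step using the Strichartz-derived gain rather than a sub-scaling gain, the argument covers the stated range and aligns with the paper's proof.
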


Next we prove Proposition~\ref{prop:refinedbox} with $b = 0$, as well as
Proposition~\ref{prop:refinedbox} \ref{p:box-lin}.  The proof of the case $b > 0 $ of 
Proposition~\ref{prop:refinedbox} is postponed for later in this  section. We remark that
while the case $b=0$ is frequently used, the  stronger bound for $b > 0$
is used just once, later in the paper, in estimating the error term $E_{1,out}$
in Section~\ref{s:error}.

\begin{proof}[Proof of Proposition~\ref{prop:refinedbox} for $b=0$] 
a) We begin with the $A_0$ bound, where we first estimate the right hand side in the equation \eqref{a0}.
Using Sobolev embeddings we have the dyadic estimate with off-diagonal decay
\begin{equation}\label{da0-lp}
\begin{split}
\| P_k [A_{j,k_1},  \partial_0 A_{j,k_2}]\|_{L^2 \dot H^{-\frac12}} \lesssim & \ 2^{-\frac16 (k_{max} - k_{min})} 
\| |D_x|^\frac16 A_{j,k_1}\|_{L^2 L^6} \|  \partial_0 A_{j,k_2} \|_{L^\infty L^2} 
\\ \lesssim & \ 2^{-\frac16 (k_{max} - k_{min})} \|A_{j,k_1}\|_{S^1} 
\| A_{j,k_2} \|_{S^1}
\end{split}
\end{equation}
After dyadic summation this gives
\[
\|   [A_{j,k_1} \partial_0 A_{j,k_2}]\|_{\ell^1 L^2 \dot H^{-\frac12}} \lesssim  \|A_{j}\|_{S^1} 
\| A_{j} \|_{S^1} \lesssim \epsilon^2
\]
Now we solve the equation \eqref{a0} perturbatively in $\ell^1 L^2 \dot H^{\frac32}$, estimating the 
terms $[A_j,[A_j, A_0]]$ and $[A_j,\partial_j A_0]$  in the same manner as above, appropriately 
using Sobolev embeddings to gain off-diagonal decay in frequency.

We need to separately prove the $\partial_t A_0$ bound, for which we use the equation~\eqref{dta0}.
Then it suffices to prove estimates of the form 
\[
\begin{split}
\|[\partial_0 A_0, A_j] \|_{\ell^1 L^2 \dot H^{-\frac12}} \lesssim \| \partial_0 A_0\|_{L^2 \dot H^\frac12} 
\| A_j\|_{\ell^2 L^\infty \dot H^1} 
\\
\|[A_0, \partial_0 A_j] \|_{\ell^1 L^2 \dot H^{-\frac12}} \lesssim \| A_0\|_{L^2 \dot H^\frac32} 
\| \partial_0 A_j\|_{\ell^2 L^\infty L^2} 
\end{split}
\]
These are also easily proved via dyadic estimates with off-diagonal
decay, which in turn are obtained using Sobolev embeddings.

b) We separately consider each of the terms on the right in the
equation \eqref{ym-cg} for $A_x$.  exactly as in case (a), using the
bound \eqref{l1-a0} for the terms containing $A_0$. Then the $b = 0$
case of \eqref{l1-ax} follows exactly as in case (a). We remark that
the null condition is not used at all here.

\end{proof}

\begin{proof}[Proof of Proposition~\ref{p:box-lin}]
  a) This is similar to the proof of the previous proposition. One
  only needs to combine the bound \eqref{l1-a0} and the energy bound \eqref{en-a0} for
  $A_0$ with Strichartz estimates for $A_x$ and $B_x$ and Sobolev
  embeddings in order to solve the equations \eqref{b0} and
  \eqref{dtb0} perturbatively in $L^2 \dot H^{s+\frac12}$,
  respectively $L^2 \dot H^{s-\frac12}$.

b) This is  similar to the corresponding  bound in the $b=0$ case of the previous
proposition. The terms on the right in \eqref{ym-cg-lin} are
similar to those in \eqref{b0}, so exactly the same estimates apply.
\end{proof}

\subsection{ The perturbative bounds in 
Proposition~\ref{p:pert}, ~\ref{p:pert-lin}
}
We primarily discuss Proposition~\ref{p:pert} here, as the numerology
is simpler. As the terms in $G_k$ are similar to those in $F_k$, the
proof of Proposition~\ref{p:pert-lin} is completely similar. However,
we remark that, since we work with $F_k$ and $G_k$ term by term, one
can view Proposition~\ref{p:pert} as a special case of
Proposition~\ref{p:pert-lin}, for $s = 1$.

\begin{proof}[Proof of  Proposition~\ref{p:pert}]
We will successively consider all terms in $F$, taking into account 
the following observations:
\smallskip

(a)  All estimates below are
consequences of the corresponding dyadic estimates.  Hence, in order
to gain the control of the frequency envelope for the output $F$ it
suffices to obtain an off-diagonal gain in each of the expressions we
consider.
\smallskip

(b) The estimates in the proposition are restricted to a time interval $I$. 
However, this does not cause any difficulties since both the Strichartz 
bounds and the estimate \eqref{null-main} are equally valid in $I$.
Further, we recall that by Proposition~\ref{p:intervals} we can readily 
restrict $S$ and $N$ functions to time intervals.
\smallskip

(c) Due to the Leray projector and the identity 
\[
F_j = \sum_k \triangle^{-1}\partial_k\big(\partial_k F_j - \partial_j F_k\big)
\]
valid for divergence free vector fields $F$,  it suffices to estimate the curl of $F_k$. 
This observation will be used for the first  term below, but not for the rest.

\bigskip
{ \bf 1. The term $ [A_i,\p_j A_i]$. } Its curl is a null form $\mathcal N(A_i,A_i)$, therefore it remains
to produce an $N$ bound for the expression $|D_x|^{-1} \mathcal N(A_i,A_i)$.
But this is a direct consequence of Proposition~\ref{prop:nullform1}, with a suitable off-diagonal gain.
\medskip

{\bf 2. The term $[A_j, \p_j A_i]$}, high-high and  and high-low interactions. 
Here we use the Coulomb condition $\partial_j A_j = 0$ to write 
\[
[A_j, \p_j A_i] = [\p_k(\triangle^{-1}\p_k A_j), \p_j A_i] =  [\p_k(\triangle^{-1}\p_k A_j), \p_j A_i] -  [\p_j(\triangle^{-1}\p_k A_j), \p_k A_i]
\]
which is of the form $\mathcal N(|D_x|^{-1}A, A)$ where the high
frequency term is hit by $|D_x|^{-1}$.  Then the desired bound is
again a consequence of Proposition~\ref{prop:nullform1}, with off-diagonal gain.

\medskip

{\bf 3. The term $ [\p_0 A_0, A_i]$.}
This is a Strichartz term. Precisely, we can use  $\p_0 A_0 \in L^2 \dot H^{\frac12}$ as in \eqref{l1-a0}
together with the $L^2 L^6$ Strichartz bound for $A_i$ and Sobolev embeddings to place it in $L^1L^2$,
with off-diagonal gain.

\medskip 
{\bf 4. The term $[A_0, \p_t A_i]$}, high-high and  and high-low interactions. 
Here one uses $A_0\in L^2\dot{H}^{-\frac{1}{2}}$ and $\nabla^{-\frac{3}{2}}\partial_t A_i\in L^2 L^\infty$ 
to place the output into $L^1 L^2$. 

\medskip

{\bf 5. The commutator term $[P_k, ad(A_{<k}^\alpha)] \partial_{\alpha}A_j$.} 
This is equivalent to an expression of the form 
\[
2^{-k} [|D_x| A^\alpha_{<k}, \partial_\alpha A_k]
\]
For $\alpha \neq 0$ this gives, as in Case 2, a null form of the type $2^{-k} \mathcal N(A_{<k},A_k)$ 
which is handled via Proposition~\ref{prop:nullform1}. For $\alpha = 0$ it is equivalent to 
\[
2^{-k} [\nabla A_{0, <k}, \partial_t A_k]
\]
which is a Strichartz term as in Case 3. Both cases have some off-diagonal gain.

\medskip

{ \bf 6. The cubic term $ [A_j,[A_j,A_i]]$. }
This is placed in $L^1 L^2$ via Strichartz estimates and Sobolev embeddings. The off-diagonal gain
is a consequence of the fact that there is a range of Strichartz estimates which can be used in order 
to obtain the $L^1L^2$ bound.
\end{proof}

\subsection{The proof of Proposition~\ref{prop:refinedbox} 
for $b > 0$} 

We consider the paradifferential decomposition of the nonlinearity in the wave equation for $A_j$
as in \eqref{para}. For the $F$ component we already have the bound in Proposition~\ref{p:pert},
more precisely \eqref{FinN2}, which suffices for all $0 \leq b < \frac12$. Hence it remains to bound the 
expression 
\begin{equation}
\| \sum_{k_1 < k-C} [A_{k_1,\alpha}, \partial^\alpha A_{k_2}]\|_{\ell^1 X^{b-\frac12,-b}} \lesssim  \epsilon^2
\end{equation}
We first dispense with some good portions of this expression. First, by using an $L^2 L^\infty$ bound 
for the first factor we obtain
\[
\|  [A_{k_1,\alpha}, \partial^\alpha A_{k_2}]\|_{L^2} \lesssim 2^{\frac{k_1}2} (
\| A_{k_1,0}\|_{L^2 \dot H^{\frac32}} + \| A_{k_1,x}\|_{S^1}) \|A_{k_2}\|_{S^1}
\]
which has off-diagonal decay when measured in $X^{b-\frac12,-b}$ at
modulations $j \geq k_1-C$ in the output. It remains to consider low
modulations in the output, namely 
\[
Q_{<k_1-C}[A_{k_1,\alpha}, \partial^\alpha A_{k_2}].
\]
We can peel off some further part of this, using the estimate
\[
\| (I -\mathcal H^*) Q_{<k_1-C}[A_{k_1,\alpha}, \partial^\alpha A_{k_2}]\|_{N} \lesssim 
(\| A_{k_1,0}\|_{L^2 \dot H^{\frac32}} + \| A_{k_1,x}\|_{S^1}) \|A_{k_2}\|_{S^1}
\]
which is a consequence of \eqref{ssn} and \eqref{hsn}, and again suffices for all $b < \frac12$.
Thus we have reduced the problem to an estimate for 
\[
 H^* Q_{<k_1-C}[A_{k_1,\alpha}, \partial^\alpha A_{k_2}] = 
\sum_{j < k_1} Q_{<j-C} [Q_j A_{k_1,\alpha}, Q_{<j-C}\partial^\alpha A_{k_2}]
\]
For each $j$, this fixes the angle $\theta$ between the two factors to $\theta \approx 2^{-(k_1-j)/2}$,
so we can localize to angles of this size. Note carefully that these angles will be essentially disjoint
on the high frequency side, but they will be overlapping on the low frequency side.

From here on we can no longer view this as a bilinear estimate for two
$S^1$ functions.  This is not just a technical difficulty; the direct
bilinear null form estimate for two $S^1$ functions will in effect be
false for $\delta < \frac14$, which is exactly the threshold we need
to cross.

To bypass this difficulty we need to use the fact that $(A_0,A_x)$ are
not arbitrary $L^2 \dot H^\frac32$, respectively $S^1$ functions, but
are solution for the Yang-Mills equation. Thus we can reiterate, and
use again the equation \eqref{ym-cg} specifically for the low
frequency factor $A_{k_1}$. Here we can take advantage of the $Z$
norm.  We will consider $A_0$ and $A_x$ separately: \medskip

{\em a) The contribution of $A_0$.} 
The analysis is simpler in this case.  We simply observe that, once \eqref{l1-a0} is proved, we can use it to 
expand it to a range of mixed norm spaces as follows:
\begin{equation}\label{l1-a0+}
\| |D_x|^{\frac{3}p} A_0\|_{\ell^1 L^{p'} L^p} \lesssim \epsilon^2, \qquad 2 \leq p < \infty 
\end{equation}
We remark that this bound fails when $p = \infty$ (precisely, we can
only control the $\ell^\infty$ norm in that case).  This is why in the
study of the Yang-Mills equation we cannot simply think of $A_0$ as
directly perturbative, and is closely related to the coupling of $A_0$
with $A_x$ in the second null condition leading to the trilinear
estimates in the last section of the paper.

To prove \eqref{l1-a0+}, we only discuss the inhomogeneous term in the
$A_0$ equation, as the terms involving $A_0$ are similar but
simpler. For this, it suffices to prove the $L^1 L^\infty$ counterpart
of \eqref{da0-lp} without off-diagonal decay; then by interpolation we
gain the off-diagonal decay for all intermediate $p$'s, and conclude
as above.  Precisely, we claim that
\begin{equation}
\| |D|^{-2} P_k [A_{j,k_1},  \partial_0 A_{j,k_2}]\|_{L^1 L^\infty}  \lesssim    \|A_{j,k_1}\|_{S^1} 
\| A_{j,k_2} \|_{S^1}
\end{equation}
The case of unbalanced frequency interactions is easy, just by using
$L^2 L^\infty$ Strichartz bounds for both factors. The more delicate
case is that of $high \times high \to low$ interactions, where $k <
k_1 = k_2$. There simply using $L^2 L^\infty$ for both factors would
yield a bad $2^{2(k_1-k)}$ bound.  To remedy this, we partition both
$A_{k_1}$ and $A_{k_2}$ in spatial frequency with respect to a lattice
of cubes $\mathcal C_k$ of size $2^k$, so that only opposite cubes
will contribute to the output. Then by Cauchy-Schwarz we have
\[
\begin{split}
\| |D|^{-2} P_k [A_{j,k_1},  \partial_0 A_{j,k_2}]\|_{L^1 L^\infty}^2 \lesssim \ & 2^{-4k} 2^{2k_1} \left( \sum_{ \mathcal C_k}
\| P_{ \mathcal C_k} A_{j,k_1} \|_{L^2 L^\infty}^2  \right) \left( \sum_{ \mathcal C_k}
\| P_{ \mathcal C_k} A_{j,k_2} \|_{L^2 L^\infty}^2  \right) 
\\
\lesssim \ & \|  A_{j,k_1} \|_{S^1}^2 \| A_{j,k_2} \|_{S^1}^2
\end{split}
\]
where we have used the next to last component of the $S_k^\omega(l)$
norm in \eqref{Sl_def} with $k = k_{1,2}$, $k'=k$ and $l'=0$.

We can now use \eqref{l1-a0+} to bound directly all 
 $ low\times high$ frequency interactions in the expression $[A_{0,k_1},\partial_0
A_{x,k_2}]$.  Indeed, by Sobolev embeddings we have 
\[
\| |D_x|^{-\frac{1}p} A_0 \|_{\ell^1 L^{p'} L^\infty} \lesssim \epsilon^2.
\]
Using this we can estimate 
\[
\||D_x|^{-\frac1p}  [A_0,\partial_0 A_x]\|_{L^{p'} L^2} \lesssim \epsilon^2 \|\partial_0 A_x\|_{L^\infty L^2}
\]
which gives the desired bound as in \eqref{l1-ax} with $b= \frac12- \frac1p$ in view of the embedding
\[
L^{p'} L^2 \subset X^{0,\frac1p-\frac12}
\]
Since $p$ is arbitrarily large, we obtain the desired bound for all $0\leq b <\frac12$.
\medskip

{\em b) The contribution of $A_x$.}  Here we begin with the bounds
\eqref{l12-to-z} and \eqref{hl-to-z}, which allow us to split $A_x$ into two
components,
\[
A_x = A_x^{good} + A_x^{bad}
\]
where $A_x^{good}$ satisfies a favorable $Z$ bound,
\[
\|A_x^{good}\|_{\ell^1 Z} \lesssim \epsilon^2
\]
and $A_x^{bad}$ is the remainder, namely
\[
A_x^{bad} = \Box^{-1} |D_x|^{-1} \sum_{k < k_1 = k_2} \mathcal H_k N(A_{k_1},A_{k_2}) 
\]
We can use the $\ell^1 Z$ bound directly for $A_x^{good}$ due to
\eqref{zsn}, which yields off-diagonal decay for all $\delta > 0$.

For $A_x^{bad}$, on the other hand, we have a favorable $S^1$ bound
with off-diagonal decay, due to \eqref{null-main},  and a $Z$ bound without off-diagonal
decay. Hence interpolating the $X_{\infty}^{1,\frac12}$ component of the $S^1$ norm
with the $Z$ norm we obtain all intermediate bounds for $A_x^{bad}$ with
off-diagonal decay. Then we can conclude as in the $A_0$ case.
This suffices for all $b < \frac12$.

\bigskip

\subsection{Proof of Proposition~\ref{p:tri}, the bulk part}

Here we consider most of the proof of Proposition~\ref{p:tri}, modulo
the more delicate trilinear part in Lemma~\ref{l:tri}. We extend $B_j$
outside the interval $I$ as free waves, and $B_0$ by zero. Then we
seek to prove the bound in the proposition on the full real line. This
allows us to consider modulation localizations.
We decompose the bilinear form
\[
[B_{\alpha,<k}, \partial^\alpha C_k] = (I - \H^*) [B_{\alpha,<k}, \partial^\alpha C_k] 
+ \H^*  [B_{\alpha,<k}, \partial^\alpha C_k] 
\]
In the first term we separate the $B_j$ and $B_0$ components.  For
$B_j$ we use the $S$ norm bound, together with the null condition and
the estimate \eqref{ssn}. For $B_0$ we use the $L^2 \dot
H^{s+\frac12}$ bound in \eqref{hsn}. It remains to consider the second term,
for which the $B_j$ and $B_0$ terms can no longer be separated:

\begin{lemma}\label{l:tri}
  Suppose that $B \in S^s$ solves the linearized equation
  \eqref{ym-cg-lin} in a time interval $I$. Extend $B_j$ outside $I$ as free waves,
and $B_0$ by zero. Then for $s < 1$, close to $1$ we have the global estimate
\begin{equation}
\|\H^* [B_{\alpha,<k}, \partial^\alpha C_k]\|_{N^{s-1}} \lesssim \epsilon \|B\|_{S^s}    \|C_k\|_{S^1}
\end{equation}
\end{lemma}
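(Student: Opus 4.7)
The plan is to reiterate the linearized equation \eqref{ym-cg-lin} for the low-frequency factor $B_{\alpha, k_1}$, thereby replacing the bilinear form by a trilinear expression in which the second null structure is exposed. The argument splits naturally along the decomposition $B_\alpha = B_0 + B_x$, since these two components admit very different Fourier-modulation descriptions.

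For the $B_0$ piece $\mathcal{H}^\ast [B_{0,<k}, \partial_t C_k]$: I would use that the elliptic equation \eqref{b0} gives $B_0 \sim |D_x|^{-2} \cdot[\,\text{quadratic in } A,B\,]$, with the quadratic sources containing always at least one $A$ factor (carrying the $\epsilon$) and one factor with a time derivative. Controlling these sources in $\Box^{1/2}\Delta^{-1/2}Z$ via \eqref{hl-to-z0} and \eqref{hh-to-z0} (with one factor replaced by $B$), then feeding the result into \eqref{zsn-ell}, delivers the desired $N^{s-1}$ estimate for this piece with off-diagonal summability.

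For the $B_x$ piece, the projector $\mathcal{H}^\ast$ restricts $B_{x,k_1}$ to modulation $2^j$ with $j < k_1$, so that schematically
\[
Q_j B_{x,k_1} \sim 2^{-(j+k_1)} Q_j \Box B_{x,k_1}.
\]
Substituting $\Box B_{x,k_1}$ from \eqref{ym-cg-lin} yields a sum of trilinear and quartic expressions in $(A,B,C)$. The quartic terms are perturbative via mixed Strichartz estimates. For the trilinear terms, mirroring the strategy used for $A_x$ in the $b>0$ case of Proposition~\ref{prop:refinedbox}, I would split $\Box B_x$ into a \emph{good} part admitting an $\ell^1 Z$ bound via \eqref{hl-to-z} and \eqref{hh-to-z}, which is then estimated through \eqref{zsn} with off-diagonal gain, and a \emph{bad} part coming from the $\mathcal{H}_{k_1}$ portion of the high-high interactions, which is only controlled in $\ell^\infty Z$ with no off-diagonal decay.

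The main obstacle is precisely this bad contribution, since a direct application of \eqref{zsn} would produce a logarithmic divergence in the frequency sum. The remedy is to exploit the second null condition present in the full reiterated trilinear expression: the angular constraint imposed by $\mathcal{H}^\ast$ (which locks the angle between $B_{\alpha,k_1}$ and $C_k$ to $\theta \sim 2^{(j-k_1)/2}$) combines with the additional angular localization $\chi_\theta$ coming from the reiterated equation to yield a genuine power-angular gain, of the same flavor as \eqref{ssn-theta}, that dominates the logarithm. The residual loss is absorbed into the Sobolev margin $1-s > 0$, which is exactly the reason the lemma is stated only for $s$ strictly below $1$. The detailed execution of this trilinear estimate is carried out in Section~\ref{s:tri}.
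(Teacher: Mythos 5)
Your overall strategy — reiterate the linearized equation, split the low-frequency factor into $B_0$ and $B_x$ pieces, and use the $Z$-norm apparatus (\eqref{zsn}, \eqref{zsn-ell}, \eqref{hl-to-z}, \eqref{hh-to-z}) for off-diagonal summability — is aligned with the paper's approach, which treats Lemma~\ref{l:tri} as the $s<1$ variant of the argument for Lemma~\ref{l:ttri}. The observation that the Sobolev margin $1-s>0$ supplies extra off-diagonal decay (so one does not need genuine $\ell^1$ control) is also correct and matches the paper's remark (i) in part (b).

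However, there is a genuine gap in how you treat the ``bad'' high-high, low-modulation contribution. First, in your $B_0$ paragraph you invoke only \eqref{hl-to-z0} and \eqref{hh-to-z0}, which are the estimates for the $(I-\H_k)$ portion; you have silently dropped the $\H_k$ portion of the quadratic source of $\Delta B_0$, which is precisely the part that carries no off-diagonal decay (compare \eqref{hh-to-z-bad0}). Second, and more importantly, you treat the bad $\H_{k_1}$ pieces of $\Box B_x$ and $\Delta B_0$ entirely independently of each other. In the paper, the final and decisive step for exactly this case is to \emph{combine} the $\Box^{-1}$-reiterated contribution to $B_x$ and the $\Delta^{-1}$-reiterated contribution to $B_0$ in a single expression $L$, rearrange it algebraically (using $\partial^\alpha = \partial^j + \partial^0$ bookkeeping and the identity $\tfrac{\partial_i\partial_j}{\Delta} = \delta_{ij} - $ Leray) to expose a double null structure, and then invoke the trilinear bound (136)--(138) from Theorem~12.1 in \cite{MKG}. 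Handled separately, each of the two bad $\H_{k_1}$ pieces produces a logarithmic divergence in the dyadic sum that the angular localization of $\H^\ast$ and the $\chi_\theta$ cutoff do not by themselves defeat; the angular gain you allude to, of the flavor of \eqref{ssn-theta}, is a bilinear gain, while the gain needed here is a genuinely trilinear cancellation that appears only after the $B_0$ and $B_x$ pieces are fused. The $1-s$ margin does help absorb the additional small high-frequency loss arising in the $s<1$ setting, but it does not replace the algebraic combination step, which is still required to cancel the logarithm.
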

This remaining lemma is proved in Section~\ref{s:tri}.

\section{ The gauge transformation }
\label{s:para}
This section is devoted to the proof of Proposition~\ref{p:para}.

\subsection{Equivalent formulations}

A first difficulty we encounter in the proof of the proposition is that the 
equations for $B_j$ are coupled via the Leray projection. 
Fortunately, it turns out that the coupling is  perturbative, and we can discard 
the projector and work with the uncoupled equations:

\begin{proposition} \label{p:para-uncouple}
Assume that $A$ is a solution to the Yang-Mills equation in Coulomb gauge which 
satisfies
\[
\|A_j\|_{S^1} \leq  \epsilon \ll 1.
\]
 Then for the equation 
\begin{equation}\label{parab-noP}
\Box B_{j,k}  + 2 [A_{\alpha,<k}, \partial^\alpha B_{j,k}] = F_{j,k}
\end{equation}
 we have the following linear estimate:
\begin{equation}
\|B_k\|_{S^1} \lesssim ( \|F_k\|_{N} + \| B_k[0]\|_{\H})
\end{equation} 
\end{proposition}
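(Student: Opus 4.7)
The plan is to construct an explicit parametrix for the paradifferential magnetic wave operator
\[
\Box^{\text{para}}_A u := \Box u + 2[A_{\alpha,<k}, \partial^\alpha u]
\]
appearing on the left of \eqref{parab-noP}. Since $A_{<k}$ is only controlled in $S^1$, the bracket $[A_{<k,\alpha}, \partial^\alpha \cdot]$ is genuinely nonperturbative at the $L^2$ level, so direct energy estimates cannot work. Following \cite{MKG,Krieger:2005wh,wm}, the idea is to absorb this interaction into a renormalizing gauge transformation on the high-frequency output. Concretely, I would seek a right parametrix of the form
\[
K_k F = \mathrm{Op}(\mathrm{Ad}(O))(x, D)\, \Box^{-1}\, \mathrm{Op}(\mathrm{Ad}(O^{-1}))(D, y)\, F,
\]
where $O(t, x, \xi) \in \G$ is a pseudodifferential symbol whose null-direction derivative is slaved to $A_{<k}$.

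The first step is to write down $O$. Heuristically it should satisfy the transport equation $(\partial_t + \hat\xi \cdot \nabla_x)\, O = -\hat\xi^\alpha A_{\alpha,<k}\, O$ along the null geodesic with spatial direction $\hat\xi$, so that conjugation by $O$ cancels the dangerous bracket to leading order. For each dyadic frequency $k$ I would fix a cover of $\mathbb{S}^3$ by caps $\omega$ of angular scale $2^{\delta k}$, with $\delta>0$ arbitrarily small as noted in the introduction, and define $O$ cap by cap through a continuous product integral along the associated null direction of the angularly localized pieces $P^{\omega}_l A_{<k}$. The product form, rather than a simple exponential sum, is forced by the noncommutativity of $\G$, and the commutation identity \eqref{comm} will drive the error bookkeeping.

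The second step is to package the symbol bounds for $O$ and $O^{-1}$ into operator bounds for $\mathrm{Op}(\mathrm{Ad}(O))$ and its $L^2$-adjoint: $L^2$ boundedness, Strichartz transference, null-frame space transference, and the corresponding bounds on $N$. Once this is in place, I would derive the parametrix identity
\[
\Box^{\text{para}}_A K_k F = F + E_k F,
\]
with an error operator $E_k$ mapping $N$ to $N$ with norm $O(\epsilon)$. Given all of this, which is exactly the content of the forthcoming Theorem~\ref{t:para}, Proposition~\ref{p:para-uncouple} follows by writing $B_{j,k}$ as the sum of a free wave carrying the data $B_k[0]$ and $K_k F_{j,k}$, corrected by a Neumann series in $E_k$; smallness of $\epsilon$ closes the iteration and yields the desired $S^1$ bound.

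The main obstacle, as the introduction foreshadows, is the genuinely non-abelian construction of $O$ together with the verification of the error estimate. In the Maxwell--Klein--Gordon case of \cite{MKG} the phase was scalar, so the product integral degenerated to an ordinary exponential and angular pieces could simply be added; here the ordering in $\G$ must be respected at every step, and the commutator $\partial_k O_{;l} - \partial_l O_{;k} = [O_{;k}, O_{;l}]$ contributes nontrivially to $E_k$. Controlling these commutator errors is precisely where the $2^{\delta k}$ angular separation, strictly finer than the $2^{k/2}$ used in \cite{Krieger:2005wh}, will pay off, in combination with the decomposability calculus for $A_{<k}$ that Section~\ref{s:decomp} sets up and the $L^2$ kernel bounds of Section~\ref{s:l2} that upgrade the symbol estimates to operator bounds on $K_k$ and $E_k$.
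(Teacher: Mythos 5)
Your high-level plan is the same as the paper's: conjugate the paradifferential magnetic wave operator by pseudodifferential renormalization operators $Op(Ad(O_\pm))$, show the conjugation error maps $N^*\to \epsilon N$ (Theorem~\ref{t:para}), and close by iteration, which after the reduction through Propositions~\ref{p:para-noloc} and~\ref{p:para-app} is exactly what the paper does. However, there is a concrete discrepancy in the proposed construction of the symbol $O$. You suggest defining $O(t,x,\xi)$ cap-by-cap as a path-ordered product integral \emph{along the associated null geodesic}, so that it solves $(\partial_t+\hat\xi\cdot\nabla_x)O\,O^{-1}=-\hat\xi^\alpha A_{\alpha,<k}$ exactly. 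The paper explicitly avoids this: a direct null-transport solve produces a symbol whose regularity in the transverse frequencies and in $\xi$ is not under control, which is precisely the difficulty heuristically flagged in Section~\ref{s:para}. Instead the paper introduces the intermediate Lie-algebra valued phase $\Psi_\pm = -L^\omega_\mp\Delta^{-1}_{\omega^\perp}(A^{main,\pm}\cdot\omega)$, obtained by \emph{elliptic} inversion of the null-transported potential, and only then defines $O_{<k}$ by integrating the ODE $\frac{d}{dh}O_{<h}O_{<h}^{-1}=\Psi_h$ \emph{in the frequency parameter} $h$, not along a spacetime path. The ``continuous product integral'' in the paper is over dyadic frequency scales (the continuous analogue of Tao's discretized gauge construction), not over position; this is what lets the $L^2$, decomposable, and difference bounds of Sections~\ref{s:decomp}--\ref{s:l2} propagate from $\Psi$ to $O$. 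Two smaller omissions: the paper splits the parametrix between the two half-cones $\tau=\pm|\xi|$ with separate symbols $O_\pm$ and separate half-wave propagators $e^{\pm it|D|}$ in \eqref{eq:Bparametrix}, and feeds only the ``main'' part $A^{main,\pm}_{j,<k}$ (cone-adapted, angularly separated, and with $A_0$ excluded entirely) into the construction of $\Psi$; your sketch does not distinguish which portion of $A_{<k}$ is renormalized versus treated perturbatively, which is a necessary part of why the error in Theorem~\ref{t:para} ends up small.
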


To transition from this to Proposition~\ref{para} it suffices to estimate the difference,
namely 
\[
\| \Delta^{-1} \partial_j  [\partial_l A_{\alpha,<k}, \partial^\alpha B_{l,k}] \|_{N} \lesssim
\| A_{\alpha,<k}\|_{S^1} \|  B_{l,k}\|_{S^1}
\]
(using the null condition via $\nabla \cdot B = 0$). This is a pure
$S$ bound as we have an extra derivative on the low frequency, and follows by \eqref{null-main}..

In view of the estimates  in Proposition~\ref{p:pert}, the frequency localized result in 
Proposition~\ref{p:para-uncouple}
is equivalent to the following nonlocalized version:

\begin{proposition} \label{p:para-noloc}
Assume that $A$ is a solution to the Yang-Mills equation in Coulomb gauge which 
satisfies
\[
\|A_j\|_{S^1} \leq  \epsilon \ll 1.
\]
 Then for the equation
\[
\Box_A B = F
\]
we have the following linear estimate:
\begin{equation}
\|B\|_{S^1} \lesssim ( \|F\|_{N} + \| B_k[0]\|_{\H})
\end{equation} 
\end{proposition}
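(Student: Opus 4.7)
The strategy is to reduce Proposition~\ref{p:para-noloc} to the already-given frequency-localized statement Proposition~\ref{p:para-uncouple}, and then to deliver that estimate through a paradifferential parametrix built from a $\G$-valued symbol $O_k(t,x,\xi)$, whose full properties are the content of Theorem~\ref{t:para}. The reduction itself goes as follows. Applying $P_k$ to $\Box_A B = F$ produces
\begin{equation*}
\Box B_k + 2[A_{\alpha,<k}, \partial^\alpha B_k] = F_k + E_k,
\end{equation*}
where $E_k$ collects exactly the same expressions that are listed and bounded term by term in the proof of Proposition~\ref{p:pert}: high-high contributions of $A$ and $B$, commutators $[P_k, \mathrm{ad}(A_{<k})]$, the $A_0$ piece, and the cubic term. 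Running the bounds of Proposition~\ref{p:pert} with $B$ in place of one $A$-factor yields $\|E_k\|_{N_k}\lesssim \epsilon \|B\|_{S^1}$ together with an $\ell^2$ frequency envelope. Granting Proposition~\ref{p:para-uncouple}, we then have $\|B_k\|_{S^1}\lesssim \|F_k\|_{N_k}+\|E_k\|_{N_k}+\|B_k[0]\|_{\H}$; square-summing in $k$ and absorbing the $E_k$ term via smallness of $\epsilon$ gives Proposition~\ref{p:para-noloc}.

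Thus the essential content is Proposition~\ref{p:para-uncouple}. For this I would construct, for each dyadic $k$, a $\G$-valued symbol $O_k(t,x,\xi)$ supported in $|\xi|\approx 2^k$, meant to approximately conjugate $\Box + 2\,\mathrm{ad}(A_{\alpha,<k})\,\partial^\alpha$ into the flat $\Box$. Following the algebraic scheme of \cite{Krieger:2005wh}, $O_k$ is defined as a continuous product (integral) along the null ray in direction $\hat\xi=\xi/|\xi|$ of infinitesimal gauge factors built from the angular projection of $A_{<k}$ onto cones of opening $2^{\delta k}$; this is the phase $\Psi_k$ of \eqref{psi-def}, where the continuous reformulation with small angular separation is borrowed from \cite{wm} and \cite{MKG}. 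The bounds we need on $O_k$, which Theorem~\ref{t:para} asserts, are: (a) the left-calculus operators $\mathrm{Op}(\mathrm{Ad}(O_k))$ and $\mathrm{Op}(\mathrm{Ad}(O_k^{-1}))$ are bounded on $S_k^1$, $N_k$, and $\H$, uniformly in $k$ and up to $O(\epsilon)$ corrections; and (b) the conjugation error
\begin{equation*}
\bigl(\Box + 2\,\mathrm{ad}(A_{\alpha,<k})\,\partial^\alpha\bigr)\mathrm{Op}(\mathrm{Ad}(O_k)) \;-\; \mathrm{Op}(\mathrm{Ad}(O_k))\,\Box
\end{equation*}
maps $S_k^1\to N_k$ with norm $O(\epsilon)$.

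Given (a)--(b), the proof of Proposition~\ref{p:para-uncouple} is a renormalization followed by Neumann iteration. I would set $\tilde F_k=\mathrm{Op}(\mathrm{Ad}(O_k^{-1}))F_k$ and $\tilde B_k[0]=\mathrm{Op}(\mathrm{Ad}(O_k^{-1}))B_k[0]$, solve the flat wave equation $\Box\tilde B_k=\tilde F_k$ with this data via \eqref{lin} in $S_k^1$, and define the first approximation $B_k^{(0)}=\mathrm{Op}(\mathrm{Ad}(O_k))\tilde B_k$. By (a) and \eqref{lin} we get $\|B_k^{(0)}\|_{S^1}\lesssim \|F_k\|_{N_k}+\|B_k[0]\|_{\H}$, and by (b) the residual
\begin{equation*}
R_k := F_k - \Box B_k^{(0)} - 2[A_{\alpha,<k},\partial^\alpha B_k^{(0)}]
\end{equation*}
is $O(\epsilon)$ in $N_k$. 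A Neumann series based on this residual converges for $\epsilon$ small, producing a true solution obeying the claimed bound, and the initial data is matched by a small adjustment of $\tilde B_k[0]$.

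The main obstacle is neither the reduction nor the renormalization loop, both of which are formal. The real work is the construction of $O_k$ and the verification of Theorem~\ref{t:para}, which is carried out in Sections~\ref{s:decomp}--\ref{s:error}. In the noncommutative setting $O_k$ takes values in $\G$ rather than in an abelian phase, so its derivatives are not exact forms but satisfy the curvature identity \eqref{comm}, and the conjugation residuals are genuinely trilinear in $A$ and $B$. Controlling them requires precise decomposability estimates for the symbol $\Psi$ in Section~\ref{s:decomp}, the kernel and $L^2$ bounds of Section~\ref{s:l2}, the Strichartz and null frame bounds for $\mathrm{Op}(\mathrm{Ad}(O_k))$, and the fine trilinear estimate Lemma~\ref{l:ttri} of Section~\ref{s:tri}; the freedom to take the angular separation $2^{\delta k}$ with $\delta>0$ arbitrarily small, absent in \cite{Krieger:2005wh}, is what makes these bounds quantitatively tractable.
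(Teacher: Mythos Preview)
Your proposal is correct and follows essentially the same route as the paper: the reduction from Proposition~\ref{p:para-noloc} to Proposition~\ref{p:para-uncouple} via the perturbative estimates of Proposition~\ref{p:pert}, and then the proof of Proposition~\ref{p:para-uncouple} through the parametrix of Theorem~\ref{t:para} together with a Neumann iteration, is exactly the paper's scheme (the iteration step being the content of Proposition~\ref{p:para-app}). The only cosmetic difference is that the paper's explicit parametrix \eqref{eq:Bparametrix} sandwiches the free propagator between a left quantization $Op(Ad(O_\pm))_{<0}(t,x,D)$ and a right quantization $Op(Ad(O_\pm^{-1}))_{<0}(D,s,y)$, with separate treatment of the two half-cones $\tau=\pm|\xi|$, rather than a single conjugation as you wrote; this distinction matters for the actual estimates in Theorem~\ref{t:para} but not for the overall logic you describe.
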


Further, in view of the same estimates in Proposition~\ref{p:pert}, the last proposition
is equivalent to the existence of a good parametrix for the corresponding 
paradifferential problem, see the proof of Theorem 5 in \cite{MKG}:

\begin{proposition} \label{p:para-app}
Assume that $A$ is a solution to the Yang-Mills equation in Coulomb gauge which 
satisfies
\[
\|A_j\|_{S^1} \leq  \epsilon \ll 1.
\]
Then for each frequency localized initial data $(B_{0k}, B_{1k}) \in \H$ and
inhomogeneous term $F_k \in N$ there exists an approximate solution
$B_k$ for the equation \eqref{parab}, in the sense that:

(i) We have the following linear estimate:
\begin{equation}\label{eq:Sbound}
\|B_k\|_{S^1} \lesssim ( \|F_k\|_{N} + \| (B_{0k},B_{1k})\|_{\H})
\end{equation}

(ii) We have the small error estimates:
\begin{equation}\label{eq:errorbound}
  \|B_k[0] -(B_{0k},B_{1k})\|_{\H} +
 \|\Box B_{k}  + 2  [A_{\alpha,<k}, \partial^\alpha B_{k}] -F_k\|_{N}
  \lesssim \epsilon ( \|F_k\|_{N} + \| (B_{0k},B_{1k})\|_{\H})
\end{equation} 
\end{proposition}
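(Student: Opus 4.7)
The plan is to construct $B_k$ via pseudodifferential gauge renormalization, in the spirit of \cite{MR2100060,Krieger:2005wh,MKG}. I seek a $\G$-valued symbol $O(t,x,\xi)$ such that conjugation by the operator $\mathcal{U} := Op(Ad(O))$ approximately intertwines the magnetic wave operator $\Box + 2\,Op(ad(A_{<k,\alpha})) \partial^\alpha$ with the free d'Alembertian. Given such a symbol, the parametrix takes the schematic form
\[
B_k = \mathcal{U}\,W(t)\bigl(\mathcal{U}^{-1}\big|_{t=0}(B_{0k},B_{1k})\bigr) + \mathcal{U}\,\Box^{-1}\bigl(\mathcal{U}^{-1} F_k\bigr),
\]
where $W$ denotes the free half-wave propagators, built separately in the $\pm\tau$ frequency half-spaces so that $\mathcal{U}$ respects the characteristic cone splitting.

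The symbol $O = O_\pm(t,x,\xi)$ will be constructed by a continuous integration, in angular sectors around $\omega = \xi/|\xi|$, of the null-tangential components of the frequency-localized connection $A_{<k}$, with angular separation $2^{\delta k}$ for a small $\delta > 0$. The design principle is that $O_\pm$ solves an approximate null transport equation along $\omega^\alpha\partial_\alpha$, arranged so that the principal part of $[\Box,\mathcal{U}]$ cancels the paradifferential term $2\,Op(ad(A_{<k,\alpha}))\partial^\alpha$ to leading order. Replacing the discrete dyadic angular parameter of \cite{Krieger:2005wh} by a continuous one introduces extra flexibility in the decomposability bounds, which ultimately simplifies the control of the parametrix.

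With this ansatz, the $S^1$ bound \eqref{eq:Sbound} reduces to mapping properties for $\mathcal{U}^{\pm 1}$, namely $\H \to \H$ at fixed time, together with $S^1 \to S^1$ and $N \to N$ globally in time, combined with the standard estimate \eqref{lin} applied to the free-wave ingredients. These mapping properties follow from symbol bounds reflecting the decomposability of $A_{<k}$ along null directions (developed in Section~\ref{s:decomp}), together with the kernel estimates of Section~\ref{s:l2}: the Killing-form invariance of $Ad(O)$ on $\g$ handles the $L^2$-based components, while the symbol regularity in $\xi$ and $x$ takes care of the Strichartz, $X^{s,b}$, and null-frame structures built into $S^1$.

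The real work is the error estimate \eqref{eq:errorbound}. By construction, the principal commutator cancellation eliminates the leading error, leaving (a) symbol-derivative corrections from $O$ hitting the free wave, (b) mismatches from angular and frequency cutoffs at the boundary of the dyadic decomposition, and (c) a genuinely nonlinear residue coming from the high-modulation regime of $A_{<k}$, where the symbolic transport equation fails. In the noncommutative Lie-algebra setting, category (c) produces a trilinear expression involving two copies of $A_{<k}$ inherited from $O$ and one free-wave factor; a naive bilinear bound, as flagged by Lemma~\ref{l:ttri}, generates a logarithmic divergence upon dyadic summation, mirroring the obstruction we had to overcome in the perturbative analysis. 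The resolution, which is the central analytical difficulty of the whole construction, is a finer trilinear analysis that exploits the second null structure via the elliptic equation \eqref{a0} for $A_0$ together with the Coulomb identity $\partial_j A_j = 0$; this argument is carried out in Section~\ref{s:tri}.
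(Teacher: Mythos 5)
Your proposal is correct and follows essentially the same route as the paper: defining the parametrix as conjugation of the free half-wave flow by $Op(Ad(O_\pm))$, with $O$ built by continuous integration of null-tangential connection components at angular separation $2^{\delta k}$, then reducing \eqref{eq:Sbound} and \eqref{eq:errorbound} to the mapping, near-identity, and commutator-error properties that the paper packages as Theorem~\ref{t:para}. In particular, your identification of the trilinear residue requiring the second null structure and the $A_0$ elliptic coupling matches the paper's Lemma~\ref{l:ttri} and Section~\ref{s:tri}.
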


\subsection{Heuristic considerations}
Naively, our goal is to ``gauge out'' the magnetic potential, i.e. to
find a suitable transformation, which we call {\em the renormalization operator},
which, up to small errors,  interchanges the magnetic wave equation with the 
flat d'Alembertian.  We now outline several considerations which eventually lead to our 
renormalization operators.
\bigskip

{\em 1. Scalar conjugations.} We would like to make a gauge transformation
\[
C_k = O^{-1}_{<k} B_k O_{<k}
\]
where $O_{<k}$ is a $\G$ valued map which is also localized at lower
frequency, in order to turn the above equation into
\[
\Box C_k  = error 
\]
A direct computation gives 
\[
\Box C_k = O^{-1}_{<k} ( \Box B_k -  [\partial_\alpha O_{<k}O^{-1}_{<k} , \partial^\alpha B_k]  
+ l.o.t.)     O_{<k}  
\]
where in ``lower order terms'' we have included expressions where both
derivatives apply to the lower frequency term $O_{<k}$.  To insure
cancellation here we would need to require that
\begin{equation}\label{hr1}
\partial_\alpha O_{<k}O^{-1}_{<k} = - A_{<k,\alpha}
\end{equation}
Solving this exactly would require  the connection $A$ to have zero curvature,
which is obviously unacceptable. 

\bigskip

{\em 2. Pseudodifferential renormalizations.}  The first remedy to the
above failure of complete integrability is then to allow the conjugation
by $O$ to be a pseudodifferential operator, whose symbol $O(t,x,\xi)$
would then have to satisfy
\begin{equation}\label{hr2}
  \p_\aa O_{<k}  O_{<k}^{-1}    \xi^\aa \approx  - A_{<k,\aa} \xi^\aa
\end{equation}
Algebraically this means that for each $\xi$ we renormalize $A_\alpha$ in a single 
direction, which is now possible. 

However, from an analytic perspective this implies that the symbol of
$O$ will have singularities associated to space-time frequencies
$\eta$ so that $\eta^\alpha \xi_\alpha = 0$.  To bypass this second difficulty
we observe that solutions to the linear wave equation are localized
in frequency on the null cone $\xi_\alpha \xi^\alpha = 0$, while the leading part 
of $A_\alpha$ are also primarily localized on the cone $\eta_\alpha \eta^\alpha = 0$.
This is useful because when both $\xi$ and $\eta$ are on the cone, the expression
$\eta^\alpha \xi_\alpha $ cannot vanish unless $\xi$ and $\eta$ are collinear.

To take advantage of the above observation,  we first note that we 
are in a paradifferential situation where $|\eta| \ll |\xi|$, therefore
the two cones $\xi_0 = \pm |\xi|$ are completely uncoupled, and will be renormalized
separately using different parametrices $O_{\pm}$. In particular this will allow us to 
work with symbols $O_{\pm}(t,x,\xi)$ which do not depend on $\xi_0$, therefore 
they act separately on time slices. Thus we replace \eqref{hr2} by
\begin{equation}\label{hr3}
 (\omega_j   \p_j \pm \p_0)   O_{<k,\pm}  O^{-1}_{<k, \pm}  \approx  
- (\omega_j A_{<k,j} \pm  A_{<k,0}), \qquad 
\omega = \xi' |\xi'|^{-1} 
\end{equation}

\bigskip

{\em 3. Pseudodifferential vs. nonlinear: divide and conquer.}
Above it was easy to replace $\xi_0$ by $\pm |\xi|$, but, due to the
nonlinear nature of the expression on the left, it is far less
straightforward to do the same for $\eta$. In order to uncouple the
pseudodifferential and nonlinear aspects of the analysis, we introduce
an intermediate step, namely
\begin{equation}\label{hr4}
(\omega_j   \p_j \pm \p_0)   O_{<k,\pm}  O^{-1}_{<k, \pm}    \approx (\omega_j   \p_j \pm \p_0) 
 \Psi_{<k,\pm}  \approx - (\omega_j A_{<k,j} \xi^j \pm  A_{<k,0})
\end{equation}

The transition from $A$ to $\Psi$ is pseudodifferential but linear,
therefore appropriately (so that only differential operators in time
are used) replacing $\eta_0$ by $|\eta'|$ we can rewrite the second
part of the above relation as
\begin{equation}\label{hr5}
(\p_j^2-  (\omega_j   \p_j)^2)   \Psi_{<k,\pm}  \approx (\p_0 \pm p_j \omega_j)
 (\omega_j A_{<k,j} \xi^j \pm  A_{<k,0})
\end{equation}
This transition is similar to the related step in the previous Maxwell-Klein Gordon result
\cite{MKG}.

The step from $\Psi$ to $O$, on the other hand, is more algebraic in
nature, and resembles the similar step in the study of wave maps, see
\cite{wm}.  Precisely, for fixed $\omega$ we seek to have the more
general approximate relation 
\[
\nabla O_{<k,\pm} O^{-1}_{<k,\pm} \approx \nabla \Psi_{<k,\pm}
\]
 Differentiating with respect to the frequency  parameter $h < k$ we obtain
\[
 \nabla (\partial_h O_{<h,\pm} O^{-1}_{<h,\pm}) + [\partial_h  O_{<h,\pm} O^{-1}_{<h,\pm},\nabla 
O_{<h,\pm} O^{-1}_{<h,\pm}]
\approx \nabla \Psi_h 
\]
The second term on the left is quadratic, and has the added feature
that the derivative applies to the lower frequency factor. Hence it is
natural to discard it. Then it is natural to obtain $O$ by integrating
$\Psi_h$ with respect to the frequency parameter $h$, i.e.
\begin{equation}\label{hr7}
\partial_h O_{<h} O_{<h}^{-1} = \Psi_h
\end{equation}
which is a well defined $\G$ valued evolution.

\bigskip {\em 4. Perturbative vs. renormalizable.} The last question
we need to address is whether we need to feed all or only part of $A$
into the construction of the renormalization operators. For simplicity 
one might attempt first the former, but, as it turns out, there are two distinct 
obstructions for this strategy. Of course, the downside of choosing the latter 
is that the remaining part of $A$ needs to be treated perturbatively.

The first issue is related to the symbol regularity for $O$. We
observe that even with $\xi$ and $\eta$ restricted to the null cones,
the expression $\eta^\alpha \xi_\alpha = 0$ can still vanish but only
when $\xi$ and $\eta$ are collinear. This is the well-known difficulty
of {\em small angle interactions}.  To avoid the corresponding symbol
singularities, we will excise the small angle interactions from the
linear flow \eqref{para} and treat them perturbatively; this is where
the null condition comes in handy. Unfortunately, it is too much to
ask to uniformly excise the small angle interactions, and instead we
do this in a frequency dependent fashion.  Precisely, we will treat
perturbatively only the interactions at angles
\[
|\angle(\xi,\eta) | \lesssim (|\eta|/|\xi|)^\delta  
\]
where $\delta$ is a universal small parameter.  This considerations will affect the 
linear step in the above construction, i.e. the transition from $A$ to $\Psi$.

The second issue is related to the fact that the expression
$\partial^\alpha \Psi \xi^\alpha$ vanishes in frequency on the
hyperplane $\eta_\alpha \xi_\alpha = 0$.  Thus, it cannot at all
cancel $A$ in the region near this hyperplane. It follows that, in
order for our strategy to work, the portion of $A$ near this
hyperplane must be perturbative. But then it is pointless (and indeed
counterproductive) to allow it to participate in the construction of
the renormalization operator. Further, $A_0$'s leading contribution
lies in this region. Thus it is natural to place $A_0$ fully on the perturbative side.

\subsection{The parametrix}

Here we define the parametrix for $\Box_A$ that yields the proof of
Proposition~\ref{p:para-app}.  By scaling we can assume that $k = 0$
in the Proposition, and drop it from the notations. For the rest of
the section we will use $k < 0$ to denote dyadic frequencies for $A$,
$\Psi$ and $O$.

Following the above heuristics, we begin with $\xi$ of size $O(1)$ and 
$\omega = \xi/|\xi|$. Then we decompose $A_{j,<0}$ into a leading part $A^{main,\pm}_{j,<0}$
and a perturbative part $A^{pert,\pm}_{j,<0}$ in a fashion which depends on $\omega$.
Here the choice of $\pm$ sign corresponds to the two cones $\tau \pm |\xi| = 0$.

The first difficulty we face is that $A_j$ are a-priori only defined in a fixed time interval $I$,
while our analysis uses many modulation localizations, which are nonlocal in time.
To address this issue, we start with $A_j$ in $I$, and extend them in time outside $I$ 
as free waves. By Proposition~\ref{p:intervals}, such an extension does not increase significantly
the $S^1$ norm of $A$.

Denoting the Fourier variables for $A$ by $(\sigma,\eta)$, the two relevant 
geometric objects are the null cone $|\sigma| = |\eta|$ and the null plane 
$\sigma \pm \eta \cdot \omega = 0$. 

It is natural to consider the two components of $\eta$, namely $\eta \cdot \omega$
and $\eta_{\perp} = \eta - \omega \eta \cdot \omega$.
We first define a partition of the Fourier space 
\[
\R^{4+1} = D^{\omega,\pm}_{cone} \cup D^{\omega,\pm}_{null} \cup D^{\omega,\pm}_{out}
\]
where the three regions are homogeneous, symmetric with respect to the origin and
\[
\begin{split}
D^{\omega,\pm}_{cone}  =& \ \{ \sgn(\sigma)(\sigma \pm \eta \cdot \omega) < - \frac{1}{16}  |\eta|^{-1} (|\eta_{\perp}|^2+ |\sigma \pm \eta \cdot \omega|^2)\} \cap \{ |\sigma| < 4 |\xi|\},
\\
  D^{\omega,\pm}_{null} = & \ \{ |\sigma \pm \eta \cdot \omega| 
\lesssim \frac{1}{8} |\eta|^{-1} (|\eta_{\perp}|^2+ |\sigma \pm \eta \cdot \omega|^2)\},
\\
 D^{\omega,\pm}_{out}= & \ \{ \sgn(\sigma)(\sigma \pm \eta \cdot \omega) > 
\frac{1}{16}  |\eta|^{-1} (|\eta_{\perp}|^2+ |\sigma \pm \eta \cdot \omega|^2)\} \cup \{ |\sigma > 2 |\xi|\}
\end{split}
\]
Correspondingly we consider a partition of unit 
\[
1 = \Pi^{\omega,\pm}_{cone}+ \Pi^{\omega,\pm}_{null} +  \Pi^{\omega,\pm}_{out}
\]
where the regularity of these symbols degenerates where $(\sigma,\eta)$ and $(\mp 1, \omega)$ are collinear,
\[
\partial_{\sigma,\eta_{perp}}^\alpha\partial_{\eta_{||}}^\beta |\Pi^{\omega,\pm}_{*}| \lesssim
\left(\frac{|\eta|}{|\eta_{\perp}| + (|\eta||\sigma \pm  \eta \cdot \omega|)^\frac12 }\right)^{2|\alpha|+ |\beta|}
\]

Our second partition is with respect to angles. Given an angle $0 < \theta < \Pi/2$ we 
 partition the Fourier space as 
\[
\R^{4+1} = D^{\omega,\pm}_{ < 2 \theta} \cup D^{\omega,\pm}_{ > \theta/2} 
\]
where 
\[
D^{\omega,\pm}_{ < 2 \theta} = \{ \angle(\omega, - \eta\, \sgn(\sigma))  < 2 \theta \}, \qquad 
 D^{\omega,\pm}_{ < 2 \theta} = \{ \angle(\omega, - \eta\, \sgn(\sigma))  >  \theta/2 \}
\]
Correspondingly we define a partition of unit 
\[
1 = \Pi^{\omega,\pm}_{<\theta}+ \Pi^{\omega,\pm}_{> \theta}
\]
with the obvious symbol regularity.

Now we are ready to define the decomposition of $A_{j,<0}$, namely 
\[
A_{j,<0}(t,x) = A^{main,\pm}_{j,<0}(t,x,\xi) +  A^{pert,\pm}_{j,<0}(t,x,\xi) 
\]
where 
\[
\begin{split}
A^{main,\pm}_{j,<0}(t,x,\xi) = & \Pi^{\omega,\pm}_{> |\eta|^\delta} \Pi^{\omega,\pm}_{cone} A_{j,<0}\\ 
A^{pert,\pm}_{j,<0}(t,x,\xi) = & (\Pi^{\omega,\pm}_{< |\eta|^\delta} \Pi^{\omega,\pm}_{cone}+ 
\Pi^{\omega,\pm}_{null} +  \Pi^{\omega,\pm}_{out})A_{j,<0}
\end{split}
\]
Here we make two observations. First, the size of the excised angle
decreases with the size of the frequency $|\eta|$. This is needed in
order to guarantee decay of the perturbative errors as $|\eta| \to
0$. Secondly, even though $ \Pi^{\omega,\pm}_{> |\eta|^\delta}$ has a
jump discontinuity at $\sigma = 0$, the symbol
$\Pi^{\omega,\pm}_{cone}$ vanishes at $\sigma = 0$ so the
discontinuity disappears.

Next we use the symbols $A^{main,\pm}_{j,<1}$ to define the 
$\g$ valued zero homogeneous symbols $\Psi_{\pm} = \Psi_{< 0, \pm} $,
by
\begin{equation}\label{defpsi}
  \Psi_{\pm}(t,x,\xi) \ = \  - L^\omega_{\mp} \Delta^{-1}_{\omega^\perp}  A^{main,\pm}_{j,<1}
\end{equation}
where
\[
L^\omega_{\pm}=\partial_t\pm\omega\cdot\nabla_x, \qquad
\Delta_{\omega^\perp} =\Delta-(\omega\cdot\nabla_x)^2,
\]

Later in the analysis we will also use the frequency localized
functions $A^{main,\pm}_{j,k}$ and $ \Psi_{\pm,k}$ defined for a
continuous dyadic parameter $h < 0$ so that
\begin{equation}\label{eq:intident}
A^{main,\pm}_{j,<k} = \int_{-\infty}^k A^{main,\pm}_{j,<h} dh,
\qquad  
  \Psi_{\pm,<k} = \int_{-\infty}^k  \Psi_{\pm,h} dh
\end{equation}

Once we have the $\g$ valued symbols $\Psi_{\pm,k}$, we define the
zero homogeneous $\G$ valued symbols $O_{\pm,<k} (t,x,\xi)$ by
solving the following differential equation on the Lie group $\G$,
\begin{equation}\label{psi-def}
\frac{d}{dk} O_{<k,\pm} O_{<k,\pm}^{-1} =  \Psi_{\pm,k} , \qquad O_{-\infty,\pm} = const
\end{equation}
Here the ode is solved separately for each $(x,\xi)$, and the solution is uniquely 
determined up to multiplication $O \to O U$ with $U= U(x,\xi)$ 
an arbitrary $\G$-valued function.  While a-priori $U$ may depend on 
$x$ and $\xi$, we can partially eliminate this dependence by requiring 
that 
\begin{equation}
\lim_{k \to - \infty} \|\partial_x O_{<k,\pm}(t,x,\xi)\|_{L^\infty} = 0,
\end{equation}
This uniquely determines $O_{\pm}$ up to multiplication with respect a field
$U(\xi)$.  We will allow this ambiguity to remain; all of our
results will be invariant with respect to such a conjugation.

To construct the parametrix for the equation \eqref{parab} we fix a
large universal constant $\kappa$ (e.g. $\kappa= 10$), and use the symbols
\[
O_{\pm}(x,D) :=  O_{\pm, < -\kappa }(x,D)
\]
and the associated operators $Op(Ad(O_{\pm}))(x,D)$. To do this we
conjugate the constant coefficient wave flow with respect to the pair
$Op(Ad( O_{\pm}))(x,D)$ on the left, respectively their adjoints
$Op(Ad( O_{\pm}^{-1}))(D,y)$ on the right. The $\pm$ operators apply to
the $\pm$ waves.  

It is important to remark here on a minor technical point that will
affect the exact definition of the parametrix. Precisely, our
parametrix should take frequency one functions to frequency one
functions. However, even though the symbols $\Psi_{\pm,k}$ have sharp
frequency localization, the symbols $O_{\pm,<k}$ are defined in a nonlinear fashion 
and do not fully inherit this property. Thus, instead of using directly the 
operators $Op(Ad(O_{\pm}))(x,D)$ in our parametrix, we need to relocalize 
these symbols at frequencies much smaller than $1$; for this we use the notation
\[
(Ad(O_{\pm}(x,\xi)))_{< 0} = P(|D_x| \ll 1) Ad(O(x,\xi)),
\]
which is nothing but a localized average of $O_{\pm}(x,\xi)$ on the unit spatial scale.
We further remark that this truncation is largely harmless, because the symbols $O_{\pm}$
exhibit rapid decay with favorable bounds at all frequencies much larger than $2^{-\kappa}$.
This issue is discussed in detail in \cite{MKG}, and we will only go over it lightly in here.

The approximate solution $B$ will have the form
\begin{equation}\label{eq:Bparametrix}
\begin{split}
B(t) =\sum_{\pm}   & \  \frac12 Op(Ad( O_{\pm})_{<0}) (t,x,D) e^{\pm it|D|}  Op(Ad( 
O^{-1}_{\pm})_{<0})(D,0,y)
( B_0 \pm i|D|^{-1}B _1)    \\ & \ +
Op(Ad( O_{\pm})_{<0})(t,x,D) \frac{1}{|D|} K^{\pm} Op(Ad( 
O^{-1}_{\pm})_{<0})(D,s,y) F
\end{split}
\end{equation}
where
\[
K^\pm f(t) = \int_{0}^t e^{\pm i(t-s)|D|} f(s) ds
\]
represents the solution to
\[
(\partial_t \mp i|D|) u = f, \qquad u(0) = 0
\]
By analogy with the MKG problem, we need to prove the following 
bounds:

\begin{theorem}\label{t:para}
  The frequency localized renormalization operators $Op(Ad(
  O_{\pm})_{<0})(t,x,D)$ have the following mapping properties with $Z
  \in \{ N_0,L^2, N^*_0\}$:
  \begin{align}
   Op(Ad(
  O_{\pm})_{<0})(t,x,D) : \quad & Z \to Z \ , \label{N_mapping}\\
    \partial_t  Op(Ad( O_{\pm})_{<0})(t,x,D): \quad & Z
    \to \epsilon Z \ , \label{dtS_mapping} \\
   Op(Ad( O_{\pm})_{<0})(t,x,D)Op(Ad( O^{-1}_{\pm})_{<0})(D,y,s) -I: \quad &
    Z \to\epsilon Z,
    \  \label{S_mapping}\\
   Op(Ad( O_{\pm})_{<0}) \Box - \Box^p_{A_{<0}}Op(Ad( O_{\pm})_{<0})
    : \quad& S^\sharp_{0,\pm}\to \epsilon N_{0,\pm} \ . \label{error_ests}\\
  Op(Ad( O_{\pm})_{<0})  : \quad &S_0^\sharp \to S_0 \ ,
    \label{parametrix_bound}
  \end{align}
where 
\[
\Box^p_{A_{<0}} = \Box + 2 ad(A_{\alpha,<0}) \partial^\alpha.
\]
\end{theorem}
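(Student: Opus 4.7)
The plan is to follow the standard parametrix-renormalization strategy introduced in \cite{wm} and adapted to MKG in \cite{MKG, Krieger:2005wh}, broken into three logical stages corresponding to Sections~\ref{s:decomp}, \ref{s:l2}, \ref{s:error}. The first stage is to extract decomposable symbol bounds for the building blocks $\Psi_{\pm,k}(t,x,\xi)$ and then for the $\G$-valued symbol $O_{\pm,<k}(t,x,\xi)$. Since $\Psi_{\pm,k}$ is obtained from the cone/angle-localized piece $A^{main,\pm}_{j,k}$ by applying $-L^\omega_{\mp}\Delta_{\omega^\perp}^{-1}$, its symbol regularity is governed by the geometry of $D^{\omega,\pm}_{cone}\cap\{|\angle(\omega,-\eta\,\mathrm{sgn}\,\sigma)|>|\eta|^\delta\}$: there the operator $L^\omega_{\mp}/\Delta_{\omega^\perp}$ is bounded, and the $\delta k$ angular separation gives just enough room to make frequency envelopes summable at small losses $2^{\delta_0 k}$. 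From $\Psi$ one obtains $O_{<k,\pm}$ by the ODE \eqref{psi-def}; smallness of $\|A\|_{S^1}\le\epsilon$ together with an integration-in-$k$ argument (as in \cite{wm}) shows that $O_{<k,\pm}-U(\xi)$ stays $O(\epsilon)$-small and inherits the decomposable regularity of $\Psi$ in both $(t,x)$ and $\xi$, with appropriately favorable bounds for $\partial_t O$, $\partial_\xi O$ and high-modulation truncations.

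The second stage uses these symbol bounds to prove the $L^2$, $N_0$ and $N_0^*$ mapping properties \eqref{N_mapping}--\eqref{S_mapping}. The $L^2\to L^2$ bound is a Calder\'on-Vaillancourt/$T T^*$ argument based on the fact that, in the $\ll 1$ frequency truncation, $(Ad(O_\pm))_{<0}-(Ad\,U)_{<0}$ is small and decomposable. The $N_0\to N_0$ and $N_0^*\to N_0^*$ bounds require one to commute the renormalization past the $X_r^{s,b}$ and angular sector pieces of the $S_k^\omega(l)$ norm in \eqref{Sl_def}; here the key point is that $\Psi$ is supported in $D^{\omega,\pm}_{cone}$, so the operator essentially preserves modulations, null frame foliations and small angular caps around $\pm\omega$, with losses controlled by the decomposable norm of $O$. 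The time derivative bound \eqref{dtS_mapping} comes from $\partial_t O\,O^{-1}=\int^{<0}\partial_t\Psi_{\pm,h}\,dh$, which is a Strichartz-controllable derivative of $A$ and hence gains an $\epsilon$. The inverse bound \eqref{S_mapping} follows by writing $Op(Ad(O))\,Op(Ad(O^{-1}))-I$ as a double commutator whose symbol is $\partial_x O_{;\xi}$ to leading order, again $O(\epsilon)$.

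The third, and hardest, stage is the conjugation error \eqref{error_ests}. One expands
\[
Op(Ad(O_\pm))_{<0}\Box - \Box^p_{A_{<0}}Op(Ad(O_\pm))_{<0}
 = [Op(Ad(O_\pm))_{<0},\Box] - 2\,ad(A_{\alpha,<0})\partial^\alpha Op(Ad(O_\pm))_{<0},
\]
and extracts the principal symbol of the commutator, which to leading order is
$2\,ad(O_{;\alpha})\partial^\alpha$ after conjugation. On a $\pm$ free wave $\xi^\alpha$ lies on the null cone, and by the defining relation \eqref{hr4}--\eqref{hr5} the contraction $\partial^\alpha \Psi_{\pm}\cdot\xi_\alpha$ matches $-A^{main,\pm}_{j,<0}\xi^j\mp A^{main,\pm}_{0,<0}$ modulo symbols supported in $D^{\omega,\pm}_{null}\cup D^{\omega,\pm}_{out}$. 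Thus the leading parts of the commutator cancel the $A^{main}$ portion of $\Box^p_{A_{<0}}$, leaving three classes of error: (i) the $A^{pert}$ contributions, which are absorbed through the bilinear null-form bounds \eqref{ssn}, \eqref{ssn-theta} and the $Z$-bounds in Proposition \ref{p:ssn} (using the $\ell^1$ refinements in Proposition \ref{prop:refinedbox}); (ii) lower order pseudodifferential commutator remainders which are handled by decomposability; and (iii) the genuinely trilinear residue arising from quadratic corrections in the ODE \eqref{psi-def}, which is treated by Lemma \ref{l:ttri}. This last piece is the main obstacle: it requires the second null condition and a careful frequency-angular analysis in the style of \cite{MKG}. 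Once \eqref{error_ests} is established, the Strichartz/null frame mapping \eqref{parametrix_bound} follows by interpolating the $L^2$ and $N^*$ bounds of stage two with the energy and Strichartz estimates for $\Box$ applied to the left- and right-hand factors of the parametrix \eqref{eq:Bparametrix}.
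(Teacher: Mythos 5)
Your three-stage outline (symbol bounds for $\Psi$ and $O$, then $L^2$/$N$ mapping properties, then the conjugation error) does match the paper's architecture, but the heart of the argument — how $E_1$ is produced and why the trilinear Lemma~\ref{l:ttri} is needed — is misidentified. The symbol $\Psi_\pm$ is built from $A_j^{main,\pm}$ \emph{alone} (see \eqref{defpsi}); no part of $A_0$ enters the construction of $O$, and the heuristic relation \eqref{hr4} is deliberately \emph{not} realized for $A_0$. Consequently the cancellation in $E_1$ eliminates only $A^{main}_j\omega_j$, leaving the entire $A_0$ contribution and the full $A^{pert}_j$ in the error. The reason this is not a purely perturbative leftover — and the reason the paper is forced into a genuine trilinear analysis — is precisely that $A_0$ cannot be estimated via $L^2\dot H^{3/2}$ alone in the null region $D^{\omega,\pm}_{null}$: one must reiterate the Yang-Mills equations for both $\Box A_j$ and $\Delta A_0$ and exploit the second null form (the "Final case" in the proof of Lemma~\ref{l:ttri-re} combining $\Box^{-1}[A_j,\partial_k A_j]$ with $\Delta^{-1}[A_j,\partial_0 A_j]$). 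Your attribution of the trilinear residue to "quadratic corrections in the ODE \eqref{psi-def}" is wrong: those corrections give rise to $E_2$, $E_3$, $E_4$, which the paper disposes of directly via the decomposable calculus in Section~\ref{s:error-good}, with no recourse to the second null structure.

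Two smaller but still substantive issues. First, your proposed proof of \eqref{S_mapping} by a symbol expansion "$\partial_x O_{;\xi}$ to leading order" would require classical pseudodifferential calculus, which the symbols $O(t,x,\xi)$ do not support; the paper instead relies on the oscillatory-integral kernel bound \eqref{almost-ortho}, which gives $|K^a_{<0}(t,x;t,y)-\check a(x-y)|\lesssim\min\{\epsilon|\log\epsilon|,|x-y|^{-N}\}$ — the logarithmic growth in \eqref{symbol2O} is essential and would be invisible to a formal symbol expansion. Second, the $N_0$ and $N^*_0$ mapping properties do not involve the $S_k^\omega(l)$ angular structure (that is only needed for \eqref{parametrix_bound}); the nontrivial content is the low-to-high modulation estimate \eqref{lo-hi}, proved via the modulation-localized Proposition~\ref{mod_loc_prop}, and \eqref{parametrix_bound} itself is obtained by reproducing the argument of Section~11 of \cite{MKG} using the dispersive kernel bounds \eqref{wave-decay0}, \eqref{wave-decay-cube0}, \eqref{off-angle0}, not by interpolation between $L^2$ and $N^*$.
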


We remark that, as we have constructed it above, $O$ is defined
globally in time, and is based on the free wave extension of $A_j$
outside the interval $I$.  All the bounds in the above theorem will
also be proved globally in time; indeed, with the exception of the
error estimate \eqref{error_ests}, only the $S^1$ norm of $A_x$ and
the Coulomb Gauge condition are used. However, in order to prove the
bound \eqref{error_ests} we will need to use the Yang-Mills equation 
for $A_x$ in $I$, as well as the definition of $A_0$ in terms of $A_x$, 
also in $I$.

The rest of the paper are devoted to the proof of the theorem.
For the remainder of this section we use the Theorem to conclude the proof 
of Proposition~\ref{p:para-app}: 
\begin{proof}[Proof of Proposition~\ref{p:para-app}.] This is completely
  analogous to the proof of Theorem 4 in \cite{MKG}. We define the
  approximate solution via \eqref{eq:Bparametrix}. Then the bound
  \eqref{eq:Sbound} follows from \eqref{N_mapping},
  \eqref{parametrix_bound}. 

  Next, we prove \eqref{eq:errorbound}. For the homogeneous part of
  the parametrix at time $t = 0$, we have
\begin{align*}
B(0) - B_0 = &\frac{1}{2}\sum_{\pm}Op(Ad( O_{\pm})_{<0})(0,x,D)  Op(Ad(O^{-1}_{\pm})_{<0})(D,0,y)
( B_0 \pm i|D|^{-1}B _1)  - B_0\\
& = \big[\frac{1}{2}\sum_{\pm} Op(Ad( O_{\pm})_{<0})(0,x,D)  Op(Ad(O^{-1}_{\pm})_{<0}(D,0,y))- I\big]( B_0 \pm i|D|^{-1}B _1)
\end{align*}
Thus the bound 
\[
\big\|B(0) - B_0\big\|_{\dot{H}^1}\lesssim \epsilon\big\|(B_0, B_1)\big\|_{\mathcal{H}}
\]
is a consequence of \eqref{S_mapping} applied to $Z = L^2$. Further, the inequality 
\[
\big\|\partial_t B(0) - B_1\big\|_{L^2}\lesssim \epsilon\big(\big\|(B_0, B_1)\big\|_{\mathcal{H}} + \big\|F\big\|_N\big)
\]
is a consequence of \eqref{dtS_mapping}, \eqref{S_mapping}, see the
proof of Theorem 5 in \cite{MKG}. Finally, for the the inhomogeneous
term, we have the following
\begin{align*}
\Box B  + 2[A_{\alpha, <0}, \partial^{\alpha}B] &= 
 \sum_{\pm}\big[\Box^p_{A_{<0}} Op(Ad( O_{\pm})_{<0})(t,x,D)-  Op(Ad( O_{\pm})_{<0})(t,x,D) \Box\big]B_{\pm}\\
& + \frac{1}{2}\sum_{\pm}\big[Op(Ad( O_{\pm})_{<0})(t,x,D)  Op(Ad(O^{-1}_{\pm})_{<0})(D,t,y) - 1\big]F\\
& +  \frac{1}{2}\sum_{\pm}\pm\big[Op(Ad( O_{\pm})_{<0})(t,x,D)|D|^{-1}Op(Ad(O^{-1}_{\pm})_{<0})(D,t,y)- |D|^{-1}\big]\partial_tF\\
&+\sum_{\pm}Op(Ad( O_{\pm})_{<0})(t,x,D)|D|^{-1}\partial_tOp(Ad(O^{-1}_{\pm})_{<0})(D,t,y) F 
\end{align*}
where we set 
\[
B_{\pm} = e^{\pm it|D|} Op(Ad(O^{-1}_{\pm})_{<0})(D,0,y)
( B_0 \pm i|D|^{-1}B _1) + |D|^{-1}K^{\pm} Op(Ad(O^{-1}_{\pm})_{<0})(D,s,y)F
\]
The first term on the right is handled by combining \eqref{error_ests}
with \eqref{N_mapping}, and the last three terms are controlled using
\eqref{S_mapping} and \eqref{dtS_mapping}.

\end{proof}


\section{ Decomposability and symbol 
bounds for $\Psi$ and $O$} \label{s:decomp}

In this section we review the notion of disposability, which is a
convenient technical tool allowing us to easily deal with issues
related to symbol calculus, which would otherwise be quite technical
in the context of our function spaces. Then we provide bounds for 
$\Psi$ and $O$, first pointwise and then in disposable spaces.

 This  section uses only the spatial components $A_{j,<0}$ at low frequency.
We assume throughout that this is divergence free,  with $\| A\|_S \leq \epsilon$ and
frequency envelope $c_k$. We fix the $\pm$ sign to $+$ and drop it from the 
notations.

\subsection{A review of the Decomposable Calculus}

First we discuss the notion of decomposable function spaces and estimates.  
This has originated in \cite{MR2100060}, \cite{Krieger:2005wh}.
 
A  zero homogeneous symbol $c(t,x;\xi)$ is said
to be in ``decomposable $L^q(L^r)$'' if $c=\sum_\theta c^{(\theta)}$,
$\theta\in 2^{-\mathbb{N}}$, and:
\begin{equation}
  \sum_\theta \lp{c^{(\theta)}}{D_\theta \big(L_t^{q}(L_x^{r})\big)} 
  \ < \ \infty \ , \label{decomp_sum}
\end{equation}
where, adhering to the definition in  \cite{MR2100060} and with $n = 4$ throughout, we put:
\begin{equation}
  \lp{c^{(\theta)}}{D_\theta \big(L_t^{q}(L_x^{r})\big)}  \ = \ 
  \big\Vert \Big( \sum_{k=0}^{10n} \
  \sum_\phi \ \sup_\omega\ \lp{b^{\phi}_\theta\
    (\theta \nabla_\xi )^k \ c^{(\theta)} }
  {L_x^{r}}^2\Big)^\frac{1}{2}\big\Vert_{L_t^{q}}
  \ . \label{theta_decomp_norm}
\end{equation}
Here $b^{\phi}_\theta(\xi)$ denotes a cutoff on a solid angular sector
${\big|\xi|\xi|^{-1} - \phi\big|\leqslant \theta}$ for a fixed
$\phi\in\mathbb{S}^{n-1}$, and the sum is taken over a uniformly
finitely overlapping collection.  We define $\lp{b}{DL^q(L^r)}$ as the
infimum over all sums \eqref{decomp_sum}.  In \cite{Krieger:2005wh} it is shown
that the following H\"older type inequality holds:
\begin{equation}
  \lp{\prod_{i=1}^m b_i}{DL^q(L^r)} \ \lesssim \ 
  \prod_{i=1}^m \lp{b_i}{DL^{q_i}(L^{r_i})} \ , \ \ 
  (q^{-1},r^{-1})=\sum_i (q_i^{-1},r_i^{-1}) 
  \ . \label{decomp_holder}
\end{equation}
In the sequel we only need a special case of decompositions provided
in terms of these norms:

\begin{lemma}[Decomposability Lemma](\cite{MKG}, Lemma 7.1)
  Let $A(t,x;D)$ be any pseudodifferential operator with symbol
  $a(t,x;\xi)$.  Suppose $A$ satisfies the fixed time bound:
  \begin{equation}
    \sup_t \lp{A(t,x;D)}{L^2\to L^2} \ \lesssim \ 1 \ . \label{ab}
  \end{equation}
  Then for any symbol $c(t,x;\xi)\in DL^{q}(L^r)$ one has the
  space-time bounds:
  \begin{equation}\label{decomp1}
    \begin{split}
      \lp{(ac)(t,x;D)}{L^{q_1} L^2\to L^{q_2}(L^{r_2})} \lesssim &
      \lp{c}{DL^{q}(L^r)} \ , \\
      \frac{1}{q_1} + \frac{1}{q} = \frac{1}{q_2}, \qquad \frac12 +
      \frac1r = \frac{1}{r_2}, \qquad & 1 \leq q_1,q_2,q,r,r_2 \leq
      \infty
    \end{split}
  \end{equation}
\end{lemma}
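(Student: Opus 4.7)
The plan is to exploit the angular cap decomposition underlying the definition of $DL^q(L^r)$, expand each sector-localized piece of $c$ in a Fourier series to isolate the $(t,x)$ dependence from $\xi$, and then assemble via H\"older, Cauchy--Schwarz, and Plancherel together with the hypothesis \eqref{ab} on $A$. First, by the triangle inequality and \eqref{decomp_sum} it suffices to prove the claimed mapping bound with $\|c\|_{DL^q(L^r)}$ replaced by $\|c^{(\theta)}\|_{D_\theta(L^q L^r)}$ for a single $\theta$, and then sum over $\theta \in 2^{-\mathbb{N}}$. Fix such a $\theta$ and decompose $c^{(\theta)} = \sum_\phi c^{(\theta)} b^\phi_\theta$ into contributions from each angular cap of diameter $\theta$. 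On each cap I would expand the symbol $c^{(\theta)}(t,x;\omega) b^\phi_\theta(\omega)$ in a Fourier series adapted to a slight fattening of the cap, with reciprocal lattice points $\{y_m\}$ spaced at distance $\theta^{-1}$ in the plane orthogonal to $\phi$. The $10n$ derivatives $(\theta \nabla_\xi)^k$ retained in \eqref{theta_decomp_norm} yield rapid decay of the Fourier coefficients,
\begin{equation*}
|c^{(\theta,\phi)}_m(t,x)| \lesssim (1+|m|)^{-5n} \sum_{k=0}^{10n} \sup_\omega \bigl|b^\phi_\theta(\omega)(\theta \nabla_\xi)^k c^{(\theta)}(t,x;\omega)\bigr|.
\end{equation*}
Since modulating the symbol by $e^{i y_m \cdot \xi}$ is the same as translating the output by $y_m$, the expansion produces the operator identity
\begin{equation*}
(a c^{(\theta)} b^\phi_\theta)(t,x;D) f(x) = \sum_m c^{(\theta,\phi)}_m(t,x) \cdot \bigl[ a(t,x;D)\tilde b^\phi_\theta(D) f \bigr](x + y_m),
\end{equation*}
with $\tilde b^\phi_\theta$ the fattened cap cutoff.

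For fixed $t$, H\"older in $x$ with $\tfrac{1}{2} + \tfrac{1}{r} = \tfrac{1}{r_2}$, combined with translation-invariance of $L^2$ and hypothesis \eqref{ab}, yields
\begin{equation*}
\|(a c^{(\theta)} b^\phi_\theta)(t,\cdot\,;D) f\|_{L^{r_2}_x} \lesssim C_\phi(t)\, \|\tilde b^\phi_\theta(D) f\|_{L^2_x},
\end{equation*}
where $C_\phi(t) = \sum_{k=0}^{10n} \sup_\omega \|b^\phi_\theta(\theta\nabla_\xi)^k c^{(\theta)}(t,\cdot\,;\omega)\|_{L^r_x}$, the $m$-series converging absolutely by the decay above. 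Cauchy--Schwarz in $\phi$, together with the bounded overlap of the Fourier supports of $\tilde b^\phi_\theta$ (so that $\sum_\phi \|\tilde b^\phi_\theta(D) f\|_{L^2}^2 \lesssim \|f\|_{L^2}^2$ by Plancherel), upgrades this to
\begin{equation*}
\|(a c^{(\theta)})(t,\cdot\,;D) f\|_{L^{r_2}_x} \lesssim \Bigl(\sum_\phi C_\phi(t)^2\Bigr)^{1/2} \|f\|_{L^2_x}.
\end{equation*}
A final H\"older in $t$ with $\tfrac{1}{q} + \tfrac{1}{q_1} = \tfrac{1}{q_2}$ pairs the square function on the right with $\|f\|_{L^{q_1}_t L^2_x}$ and reproduces exactly the exterior $L^q_t$ norm in \eqref{theta_decomp_norm}; summing the reduction over $\theta$ then closes the estimate.

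The only genuine obstacle is absolute convergence of the Fourier series in $m$: because the cap's reciprocal lattice is $(n-1)$-dimensional, one needs strictly more than $n-1$ derivatives to control the $m$-sum, and the choice of $10n$ derivatives in \eqref{theta_decomp_norm} leaves ample margin. All other ingredients -- the cap Fourier expansion, H\"older and Cauchy--Schwarz pairings, and the almost-orthogonality of angular projectors via Plancherel -- are standard symbol-calculus manipulations that interact transparently with the $L^2$-boundedness hypothesis \eqref{ab}, which is the reason a general pseudodifferential operator $A$ can be inserted with no loss into the decomposable framework.
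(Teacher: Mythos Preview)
The paper does not supply its own proof of this lemma; it is quoted directly from \cite{MKG}. Your argument follows the standard route used there (angular cap decomposition, Fourier expansion on each cap to separate the $(t,x)$ and $\xi$ dependence, then H\"older/Cauchy--Schwarz/Plancherel), so in overall strategy you are aligned with the original.

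There is one technical slip worth correcting. Since $c$ is zero--homogeneous in $\xi$, the Fourier expansion on a cap of diameter $\theta$ produces modes of the form $e^{i y_m \cdot \omega}$ with $\omega = \xi/|\xi|$, \emph{not} $e^{i y_m \cdot \xi}$. Such a factor is a zero--homogeneous Fourier multiplier, and multiplication by it on the symbol side is not literally a spatial translation of the output. The correct operator identity is
\[
(a\, c^{(\theta)} b^\phi_\theta)(t,x;D) f \ = \ \sum_m c^{(\theta,\phi)}_m(t,x)\, \bigl[A(t)\, e^{i y_m \cdot \omega}(D)\, \tilde b^\phi_\theta(D) f\bigr](x),
\]
using that the left quantization with symbol $a(t,x;\xi)\,m(\xi)$ equals $A(t)\circ m(D)$ whenever $m$ is $x$--independent. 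Your formula $[A(t)\tilde b^\phi_\theta(D) f](x+y_m)$ would instead evaluate the symbol $a$ at $x+y_m$ rather than at $x$, which is not what the product symbol gives. The fix is harmless: since $|e^{iy_m\cdot\omega}|=1$, the multiplier $e^{iy_m\cdot\omega}(D)$ is $L^2$--bounded with norm one, hypothesis \eqref{ab} yields $\|A(t)\,e^{iy_m\cdot\omega}(D)\,\tilde b^\phi_\theta(D) f\|_{L^2}\lesssim \|\tilde b^\phi_\theta(D) f\|_{L^2}$, and the remainder of your argument (H\"older in $x$, absolute summation in $m$ via the derivative bounds, Cauchy--Schwarz in $\phi$ with the Plancherel almost--orthogonality of the $\tilde b^\phi_\theta(D)$, and H\"older in $t$) goes through unchanged.
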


In the sequel it will also be useful for us to treat estimates for
products of operators in a modular way. Recall that if $a(x,\xi)$ and
$b(x,\xi)$ are symbols, then $a^rb^r- (ab)^r\approx i(\partial_x
a \partial_\xi b)^r$. This formula is not exact, but it leads to an
estimate, which is a simple variant of Lemma 7. 2 in \cite{MKG}:

\begin{lemma}[Decomposable product calculus]
  Let $a(x,\xi)$ and $b(x,\xi)$ be smooth symbols, and $\lambda > 0$. Then:
  \begin{equation}
    \lp{a^rb^r- (ab)^r}{L^r(L^2)
      \to L^q(L^2)}  \lesssim  \sup_{1 \leq |\alpha| < N}   
\lambda^{|\alpha|} \lp{(\nabla_x a)^r}{L^r(L^2)\to L^{p_1}(L^2)}
 \sup_{1 \leq |\alpha| < N}  \lambda^{|\alpha|}     \lp{\nabla^\alpha_\xi b}{L^{p_1}L^2 \to L^q L^2}  \label{spec_decomp1}
  \end{equation}
\end{lemma}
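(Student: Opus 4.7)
The plan is to follow the argument of Lemma 7.2 in \cite{MKG}, inserting the parameter $\lambda$ to track the frequency scale at which the two symbols live. The starting point is the exact symbolic composition identity obtained by Taylor expanding the oscillatory integral representation of $a(x,D) b(x,D)$ in the $y$ variable around $y=x$:
\[
a(x,D) b(x,D) - (ab)(x,D) = \sum_{1 \leq |\alpha| < N} \frac{(-i)^{|\alpha|}}{\alpha!} \bigl( \partial_\xi^\alpha a \cdot \partial_x^\alpha b \bigr)(x,D) + R_N,
\]
where the remainder $R_N$ has an explicit integral kernel involving $N$ $y$-derivatives of $b$ paired against corresponding $\xi$-derivatives of $a$, convolved with a kernel supported near the spatial diagonal. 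This identity is purely algebraic and identical to the one used in \cite{MKG}.

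For each fixed $\alpha$ with $1 \leq |\alpha| < N$, I would write the resulting symbol operator as the composition $(\partial_\xi^\alpha a)(x,D) \circ (\partial_x^\alpha b)(x,D)$, modulo further commutator terms which are reabsorbed at the next level of the expansion, and apply the Decomposability Lemma 7.1 together with the H\"older-type inequality \eqref{decomp_holder}. The rescaling $\xi \mapsto \lambda \xi$ (equivalently, localizing to a frequency slab of scale $\lambda$) converts each derivative loss $\partial_\xi$, $\partial_x$ into a factor of $\lambda$, so that the combined weight $\lambda^{|\alpha|}$ appears naturally on each side of the product. Summed over $\alpha$, the finitely many terms are controlled by the two $\sup$s in \eqref{spec_decomp1}, the factorization through the intermediate space $L^{p_1}(L^2)$ being provided by \eqref{decomp_holder}.

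The main obstacle will be bookkeeping for the remainder $R_N$: one has to verify that with $N$ chosen large enough, depending only on $q$, $r$, $p_1$, and the spatial dimension, the kernel of $R_N$ is sufficiently regular that it can be estimated directly in the decomposable framework with a gain of a negative power of $\lambda$. This rests on the smoothness of $a$ and $b$, which allows arbitrarily many derivatives to be taken, and on the rapid decay of the Taylor remainder kernel. Apart from this bookkeeping, every step is obtained from the corresponding step in \cite{MKG} by the $\lambda$-rescaling described above, so the proof reduces to citing Lemma 7.2 of \cite{MKG} after this rescaling, and we content ourselves with sketching the details.
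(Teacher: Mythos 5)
The paper offers no proof of its own for this lemma; it simply notes the asymptotic identity $a^rb^r-(ab)^r\approx i(\partial_xa\,\partial_\xi b)^r$ and defers to Lemma 7.2 of \cite{MKG} as a ``simple variant.'' Your plan (Taylor expand the composition kernel, bound the finitely many terms via decomposable calculus and the H\"older inequality \eqref{decomp_holder}, and estimate an integral remainder) is precisely the argument \cite{MKG} uses, so in spirit you are taking the same route as the paper.

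Two points deserve attention before this could be called a proof. First, a quantization mismatch: your expansion
$a(x,D)b(x,D)-(ab)(x,D)=\sum_\alpha\tfrac{(-i)^{|\alpha|}}{\alpha!}(\partial_\xi^\alpha a\cdot\partial_x^\alpha b)(x,D)+R_N$
is the \emph{left} ($(x,D)$) quantization identity, whereas the lemma is stated for the right ($a^r$, i.e.\ $(D,y)$) quantization, for which the roles of $x$ and $\xi$ derivatives are exchanged --- that is what produces $\nabla_x a$ and $\nabla_\xi b$ on the right-hand side of \eqref{spec_decomp1}, in agreement with the paper's leading-order formula. The slip is cosmetic but should be cleaned up so the factors land on the correct symbols. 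Second, and more substantively, the step ``write $(\partial_\xi^\alpha a\cdot\partial_x^\alpha b)(x,D)$ as $(\partial_\xi^\alpha a)(x,D)\circ(\partial_x^\alpha b)(x,D)$ modulo further commutator terms which are reabsorbed at the next level of the expansion'' is circular as phrased: converting a product symbol into a composition of operators is exactly the statement you are trying to prove, so the reabsorption cannot silently invoke the lemma itself. The standard fix, and the one \cite{MKG} and \cite{MR2100060} use, is not to factor through a composition at all. Instead one keeps the first-order expansion with an \emph{exact integral remainder} for the kernel of $a^rb^r-(ab)^r$, and bounds that kernel directly by the $L^{p_1}(L^2)$ operator norms that appear on the right side, using the rapid off-diagonal decay coming from the smoothness of the symbols. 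If you replace the vague ``reabsorption'' step with this direct kernel estimate, the remainder bookkeeping you flag at the end becomes the only real work, and your argument closes.
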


\subsection{Bounds for $A$}

Here we state the decomposability bounds for $A$, see  \cite{MKG}, Lemma 7.3:

\begin{lemma}
The functions $A_x \cdot \omega$, $A_0$ satisfy the following decomposability bounds:
\begin{equation}\label{A_decomp2}
\| P_k (A_x^{(\theta)} \cdot \omega, A_0)\|_{DL^p L^\infty} \lesssim 2^{(1 - \frac1p)k} \theta^{\frac52 - \frac2p}
\end{equation}
\end{lemma}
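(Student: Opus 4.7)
The plan is to establish the bound at the two endpoints $p=2$ and $p=\infty$ separately and then interpolate in $p$. At each endpoint I will treat the two components $A_x\cdot\omega$ and $A_0$ by distinct mechanisms that both generate the required $\theta^{5/2-2/p}$ angular gain.

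For $P_k(A_x\cdot\omega)$ the crucial input is the Coulomb gauge condition $\partial_j A_j=0$, which in Fourier reads $\eta\cdot \widehat{A}_x(\eta)=0$. After localizing $A_x$ to an angular sector of opening $\theta$ about a direction $\phi$, producing a piece $A_x^{\phi,\theta}$ with spatial frequency $\eta/|\eta|=\phi+O(\theta)$, this orthogonality forces $|A_x^{\phi,\theta}\cdot\phi|\lesssim\theta|A_x^{\phi,\theta}|$, so
\[
A_x^{\phi,\theta}\cdot\omega = A_x^{\phi,\theta}\cdot(\omega-\phi) + O(\theta)\,|A_x^{\phi,\theta}|.
\]
I then combine this pointwise cancellation with Bernstein on the angular block, whose frequency-space volume is $\theta^3\, 2^{4k}$ and which costs at most $\theta^{3/2}\,2^{2k}$ in passing from $L^2_x$ to $L^\infty_x$ at fixed time, and with the $S^1$ control on $A_x$. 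At the endpoint $p=\infty$, the additional $\theta$ Coulomb factor combined with the Bernstein loss $\theta^{3/2}2^{2k}$ applied to $\|P_k A_x\|_{L^\infty L^2}\lesssim 2^{-k}$ yields the required $\theta^{5/2}\,2^k$. At the endpoint $p=2$, I will instead feed the square summability in $\phi$ already built into the $S^{ang}_k$ norm \eqref{Sl_def}, in particular the radially shortened $L^2(L^\infty)$ component, and combine it again with the Coulomb gain.

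For $P_k A_0$ I will invert the elliptic equation \eqref{a0} to write $A_0=\Delta^{-1}[A_j,\partial_0 A_j]$ up to cubic corrections handled exactly as in the proof of Proposition~\ref{p:data-cg}. The two derivatives gained from $\Delta^{-1}$, combined with Bernstein on angular sectors and with bilinear Strichartz bounds for the product $A_j\cdot\partial_0 A_j$, produce $2^{(1-1/p)k}\theta^{5/2-2/p}$ with no need for a cancellation mechanism; for $A_0$ the angular gain is purely volumetric.

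The main obstacle I expect is the bookkeeping of the squared sum over $\phi$ in \eqref{theta_decomp_norm} together with the supremum over the auxiliary direction $\omega$, since the Coulomb cancellation couples the projection direction with the sector direction, and the two have to be decoupled after applying $(\theta\nabla_\xi)^k$. Once the two endpoints are secured, interpolation in $p$ closes the remaining range. The entire strategy is that of Lemma~7.3 of \cite{MKG}, and transfers without modification since at this stage only the $S^1$ norm of $A_x$ and the Coulomb gauge condition enter, with $A_0$ recovered from $A_x$ by the same elliptic equation as in MKG.
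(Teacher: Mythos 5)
The paper does not actually prove this lemma: it simply cites Lemma~7.3 of \cite{MKG} (and the proof there, which your strategy follows). So the relevant question is whether your independent reconstruction is correct.

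For the $A_x^{(\theta)}\cdot\omega$ part, your outline is essentially the MKG argument and is sound, with one small caveat on the $p=2$ endpoint. A free wave is not in $L^2_tL^2_x$, so Bernstein from $L^2_x$ is not available at the $L^2_t$ endpoint; one must instead use the sharp Strichartz $L^2_tL^6_x$ component of $S_k$ and Bernstein from $L^6_x$ to $L^\infty_x$ on the $\theta$-sector, together with the $\ell^2_\phi$ square summability built into $S^{ang}_k$. That delivers $\theta^{1/2}2^{k/2}$, and the Coulomb factor gives $\theta^{3/2}2^{k/2}$, which is $\theta^{5/2-2/p}2^{(1-1/p)k}$ at $p=2$. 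Your $p=\infty$ endpoint and your remark about the $(\theta\nabla_\xi)^k$ derivatives are both fine: each normalized $\xi$-derivative on $A_x\cdot\omega$ costs the Coulomb $\theta$ but produces a compensating $\theta$ from $\nabla_\xi\omega$, so the $\theta$ gain survives.

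The $A_0$ part is where there is a real gap. Your claim that ``for $A_0$ the angular gain is purely volumetric'' only produces $\theta^{3/2}$, not $\theta^{5/2-2/p}$. Concretely, Bernstein on the $\theta$-sector loses $\theta^{3/2}2^{2k}$ from $L^2_x$ to $L^\infty_x$ with free $\ell^2_\phi$ summability; combined with $\|P_kA_0\|_{L^2_tL^2_x}\lesssim 2^{-3k/2}$ and $\|P_kA_0\|_{L^\infty_tL^2_x}\lesssim 2^{-k}$ this gives $\theta^{3/2}2^{(1-1/p)k}$ for all $2\le p\le\infty$. At $p=2$ this coincides with $\theta^{5/2-2/p}$, but for $p>2$ the stated bound $\theta^{5/2-2/p}$ is strictly stronger (since $\theta<1$) and the volumetric count alone does not reach it; neither does inverting $\Delta^{-1}$, which you already used to obtain the elliptic regularity $L^2\dot H^{3/2}\cap L^\infty\dot H^1$. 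You need a genuine extra $\theta$ gain for the elliptic component that plays the role Coulomb plays for $A_x\cdot\omega$, and ``bilinear Strichartz on $A_j\partial_0A_j$'' does not supply it, because the angular separation enforced by bilinear estimates is in the angle between the two high-frequency inputs (which is $\sim 2^{(k-k_1)/2}$), not in the output sector angle $\theta$. As it stands, your argument proves the stated $p=2$ case (which is in fact the only one used in this paper for $A_0$, see the $E_{1,cone}$ and $E_{1,out}$ bounds in Section~8), but does not prove the lemma as stated for $p>2$.
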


\subsection{ Bounds for $\Psi$}

For the purpose of our first step we use the frame determined by 
$\omega = \xi |\xi|^{-1}$ and its orthogonal complement $\omega^{\perp}$ to describe the 
regularity of $\Psi$. We have 

\begin{lemma}
The functions $\Psi_k(t,x,\xi)$ satisfy the  following bounds for fixed $t$ and $\xi$:
\begin{equation}\label{psi-2-ft}
 \| \nabla_{\omega^\perp} \nabla \Psi_k\|_{L^2} \lesssim c_k 
\end{equation}
\begin{equation}\label{psi-2-fta}
  \|  \nabla^2 \Psi_k\|_{L^2}\lesssim 2^{-\delta k}c_k 
\end{equation}
We also get the bounds 
\begin{equation}
\| \nabla_{\xi}^N\nabla^2 \Psi_k\|_{L^2}\lesssim 2^{-(N+1)\delta k}c_k.
\end{equation}

\end{lemma}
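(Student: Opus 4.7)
The plan is to prove all three bounds by spatial Plancherel at fixed time combined with an analysis of the space-time Fourier symbol of $\Psi_k$. By \eqref{defpsi}, this symbol equals
\[
-\frac{i(\sigma - \omega \cdot \eta)}{|\eta_\perp|^2}\,\Pi^{\omega,+}_{cone}(\sigma,\eta)\,\Pi^{\omega,+}_{>|\eta|^\delta}(\eta)
\]
acting on the $\omega^j$ contraction of $\widehat{A_{j,k}}(\sigma,\eta)$, in line with the heuristics of Section~\ref{s:para}. The key algebraic input is the identity $(\sigma - \omega \cdot \eta)(\sigma + \omega \cdot \eta) = \sigma^2 - (\omega \cdot \eta)^2$, which on $|\sigma| = |\eta|$ equals $|\eta_\perp|^2$; hence near the characteristic cone $L^\omega_-/|\eta_\perp|^2$ reduces to $1/L^\omega_+$, and on the support of the cutoff $|L^\omega_+| \gtrsim |\eta|\sin^2\theta$ with $\theta = \angle(\omega, -\hat\eta\,\sgn(\sigma))$.

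To establish \eqref{psi-2-ft}, I would invoke the Coulomb gauge $\nabla \cdot A_x = 0$, which forces $\widehat{A_{j,k}} \perp \eta$, to deduce $|\omega^j \widehat{A_{j,k}}| \lesssim \sin\theta\,|\widehat{A_{j,k}}|$ on the cutoff support (where $\omega$ lies within angle $O(1)$ of $-\hat\eta\,\sgn(\sigma)$). Combined with $|L^\omega_-|/|\eta_\perp|^2 \lesssim 1/(|\eta|\sin^2\theta)$ and the Fourier symbol $|\eta| \cdot |\eta_\perp| = |\eta|^2 \sin\theta$ of $\nabla_{\omega^\perp}\nabla$, the two $\sin\theta$ factors exactly cancel the $\sin^2\theta$ in the denominator, yielding the pointwise bound $|\eta|\,|\widehat{A_{j,k}}|$ on the spatial Fourier transform of $\nabla_{\omega^\perp}\nabla \Psi_k$. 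Plancherel then gives $\|\nabla_{\omega^\perp}\nabla \Psi_k\|_{L^2_x} \lesssim \|\nabla A_k(t)\|_{L^2_x} \lesssim c_k$.

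For \eqref{psi-2-fta}, the operator $\nabla^2$ has symbol $|\eta|^2$, missing the $\sin\theta$ factor, so only one $\sin\theta$ gain is available from Coulomb. The residual $1/\sin\theta$ is bounded by $2^{-\delta k}$ through the angular cutoff, giving $\|\nabla^2 \Psi_k\|_{L^2} \lesssim 2^{-\delta k} c_k$. For the last bound, each $\nabla_\xi$ falls either on the angular cutoff $\Pi^{\omega,+}_{>|\eta|^\delta}$, whose angular thickness $2^{\delta k}$ forces a loss of $2^{-\delta k}$ per derivative, or on the smooth multipliers $L^\omega_-$, $\Delta^{-1}_{\omega^\perp}$, and $\Pi^{\omega,+}_{cone}$, where $\omega$-derivatives contribute only $O(1)$ factors. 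Accumulating $N$ such losses together with \eqref{psi-2-fta} yields $2^{-(N+1)\delta k} c_k$.

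The main obstacle is to propagate the Coulomb gain honestly through the space-time Fourier restriction to the cone region, which is a thick neighborhood of the null plane rather than the cone itself. The strict orthogonality $\widehat{A_{j,k}} \perp \eta$ must therefore be combined with the modulation estimate from $\|A_k\|_{S^1} \lesssim c_k$ (in particular its $X^{0,1/2}_\infty$ component) to produce an effective $\sin\theta$ factor after $\sigma$-integration; the approximation $L^\omega_-/|\eta_\perp|^2 \approx 1/L^\omega_+$ similarly relies on the modulation $|\sigma^2 - |\eta|^2| \lesssim |\eta_\perp|^2$ being small on the effective Fourier support. A subsidiary technicality is verifying that derivatives of $\Pi^{\omega,+}_{cone}$ actually supply the $O(1)$ regularity used above, which is ensured by the explicit degeneracy scale $|\eta_\perp| + (|\eta||L^\omega_+|)^{1/2}$ built into the partition of unity construction.
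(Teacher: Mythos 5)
Your proof takes essentially the same approach as the paper: the paper's short proof rests on the Coulomb-gauge rewriting $A_{j,k}\omega_j = \Delta^{-1}\nabla\nabla_{\omega^\perp}A_{j,k}$, which is exactly the Fourier-side $\sin\theta$ gain you extract from $\widehat{A}_{j,k}\perp\eta$, and which cancels one of the two inverse transverse derivatives in \eqref{defpsi}. Two minor calibration notes: the ``modulation obstacle'' you raise is not actually present, because the Coulomb gain is a fixed-time spatial-Fourier fact valid at every modulation, so no $X^{0,\frac12}_\infty$ input is needed to produce it; and in the $\nabla_\xi^N$ bound, a $\xi$-derivative of $\Delta^{-1}_{\omega^\perp}$ or of the $\omega^j$ contraction costs $\theta^{-1}$ rather than $O(1)$, which happens to match the $2^{-\delta k}$ loss from the angular cutoff on the support $\theta\gtrsim 2^{\delta k}$ and so leaves the stated exponent $2^{-(N+1)\delta k}$ unchanged.
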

We remark that, as a consequence of Bernstein's inequality, the bound 
\eqref{psi-2-ft} implies the pointwise bounds
\begin{equation}\label{psi-inf-ft}
 \| \Psi_k\|_{L^\infty} \lesssim c_k 
\end{equation}
Also we consider $L^p$ norms at fixed time. Fixing $\xi $ we use the
orthonormal frame associated to $\xi$, and the mixed norms
$L^2_{\omega} L^6$ and $L^\infty_{\omega} L^3$.  By Bernstein's inequality,
from \eqref{psi-2-ft} we obtain
\begin{equation}\label{psi-mixed-ft}
\| \nabla_x \Psi_k\|_{L_\omega^2 L^6} + \|\nabla_{\perp} \Psi_k\|_{L^\infty_{\omega} L^3}   \lesssim  c_k
\end{equation}

\begin{proof}
We first note that simply using the $L^2$ fixed time bound for $\nabla A$ does not suffice
due to the presence of two inverse derivatives in \eqref{defpsi}. We will use 
the Coulomb gauge condition to cancel one of these
two derivatives.  Precisely, using the Coulomb gauge condition to write
\[
A_{j,k} \omega_j = (\omega_j -  |\Delta|^{-1}  \nabla \otimes \nabla )    A_{j,k} =  
\Delta^{-1} \nabla \nabla_{\omega^\perp} A_{j,k}
\]
which is exactly what we need.

\end{proof}

Next, we consider a number of decomposable
estimates for the phase $\Psi(t,x;\xi)$ used to define our microlocal
gauge transformations:

\begin{lemma}[Decomposable estimates for $\Psi$]
  Let the phase $\Psi(t,x;\xi)$ be defined as in \eqref{defpsi}, and
  its angular components $\Psi^{(\theta)}=\Pi_\theta^\omega
  \psi(t,x;\xi)$, where $\omega=|\xi|^{-1}\xi$.  Then for $q \geq 2$
  and $2/q+3/r \leq \frac32$ one has:
  \begin{equation}
    \lp{(\Psi_k^{(\theta)},2^{-k}\nabla_{t,x}\Psi_k^{(\theta)})}{DL^q(L^r)} \ 
    \lesssim \ 2^{-(\frac{1}{q}+\frac{4}{r})k}\theta^{\frac{1}{2}-\frac{2}{q}-\frac{3}r}\epsilon
    , \label{psi_decomp2}
  \end{equation}
In addition, suppose that $\theta \lesssim  2^{j} \lesssim 1$.
Then for $q,r \geq 2$ we also have 
\begin{equation}
    \lp{Q_{k+2j} (\Psi_k^{(\theta)},2^{-k}\nabla_{t,x}\Psi_k^{(\theta)})}{DL^q(L^r)} \ 
    \lesssim \ 2^{-(\frac{1}{q}+\frac{4}{r})k} 2^{-(1-\frac{2}q)j}\theta^{\frac{1}{2}-\frac{3}r}\epsilon
    , \label{psi_decomp2-mod}
  \end{equation}
Further,
\begin{equation}
 \lp{\Box \Psi_k^{(\theta)}} {DL^2(L^\infty)} \
    \lesssim \ \theta^\frac32 2^{\frac{3}{2}k}, \label{boxpsi-disp}
\end{equation}

  In particular
  \begin{align}
    \lp{(\Psi_k,2^{-k}\nabla_{t,x}\Psi_k)} {DL^q(L^\infty)} \
    &\lesssim \ 2^{-\frac{1}{q}k}\epsilon
    \ ,  & q>4 \ , \label{psi_decomp1}\\
 \lp{Q_{k+2j}(\Psi_k,2^{-k}\nabla_{t,x}\Psi_k)} {DL^q(L^\infty)} \
    &\lesssim \ 2^{-\frac{1}{q}k}2^{(\frac12-\frac2q)j}\epsilon
    \ ,  & 2 \leq q <4 \ , \label{psi_decomp1-mod}\\
    \lp{\nabla_{t,x}\Psi_k} {DL^2(L^r)} \ &\lesssim \ 2^{(\frac12-
      \frac{4}{r} - \delta(\frac12+\frac3r))k}\epsilon \ ,  & r\geq 6 \
    , \label{psi_decomp3}
  \end{align}
\end{lemma}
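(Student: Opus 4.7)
The plan is to deduce all stated bounds from the decomposability estimate \eqref{A_decomp2} for $\omega\cdot A_x^{(\theta)}$ by a pointwise analysis of the Fourier multiplier $m:=L^\omega_\mp\Delta^{-1}_{\omega^\perp}$ that defines $\Psi$ in \eqref{defpsi}. Throughout, $\omega=\xi/|\xi|$ is fixed and all operator bounds are uniform in $\xi$.

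First I would carry out the symbol analysis. On the Fourier support of $\Pi_\theta^\omega\,\Pi^{\omega,\pm}_{cone}\,P_k$ one has $|\eta|\sim 2^k$, $|\eta_\perp|\sim\theta\,2^k$, and the cone localization forces $|\sigma\mp\omega\cdot\eta|\lesssim 2^k$, so the symbol of $m$ obeys $|\partial_\xi^N m|\lesssim 2^{-k}\theta^{-2-N}$ on this set, the extra $\theta^{-1}$ per $\xi$-derivative coming from differentiating $\omega$. The smoothness of the cutoffs at angular scale $\theta$ then places the truncated symbol into the decomposable class needed for the Decomposability Lemma, realizing the multiplier as a decomposable operator of $L^\infty(L^\infty)$-type norm $\lesssim 2^{-k}\theta^{-2}$. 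Composing with \eqref{A_decomp2} via \eqref{decomp_holder} gives \eqref{psi_decomp2} for $r=\infty$; for $r<\infty$ a Bernstein estimate on the Fourier support (a rectangle of volume $\sim\theta^3\,2^{4k}$) supplies the additional factor $2^{-4k/r}\theta^{-3/r}$. The pointwise corollary \eqref{psi_decomp1} follows by summing over $\theta\geq 2^{\delta k}$.

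For \eqref{psi_decomp2-mod} I would restrict the above symbol analysis to $Q_{k+2j}\cap\Pi_\theta^\omega\cap\Pi^{\omega,\pm}_{cone}$. There the cone geometry together with the modulation constraint pin $|\sigma\mp\omega\cdot\eta|\lesssim 2^{k+2j}$, saving an extra factor $2^{2j}$ in $m$, while on the input side the $X^{0,\tfrac12}_\infty$ component of the $S$-norm of $A$ supplies a further gain $2^{-(k+2j)/2}$, and the two combine to the claimed scaling. For \eqref{boxpsi-disp} I would use the algebraic identity $\Box=L^\omega_+L^\omega_--\Delta_{\omega^\perp}$ to write
\[
\Box\Psi_k^{(\theta)}\;=\;L^\omega_\mp A^{main,\pm}_k\;-\;L^\omega_\mp L^\omega_\pm L^\omega_\mp\,\Delta^{-1}_{\omega^\perp}\,A^{main,\pm}_k,
\]
and then estimate each term in the same decomposable framework, the factor $\theta^{3/2}\,2^{3k/2}$ arising from the cone identity $(\sigma-\omega\cdot\eta)(\sigma+\omega\cdot\eta)=|\eta_\perp|^2$ applied to the $L^\omega_\mp$ that lives on the cone. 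The remaining corollaries \eqref{psi_decomp1-mod} and \eqref{psi_decomp3} follow from \eqref{psi_decomp2} and \eqref{psi_decomp2-mod} by interpolation and optimization in the parameters.

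The main obstacle is the modulation-localized estimate \eqref{psi_decomp2-mod}: balancing the angular scale $\theta$ against the modulation scale $2^{k+2j}$ in the regime where neither is uniformly smaller requires combining the precise geometry of $D^{\omega,\pm}_{cone}$ with the $X^{0,\tfrac12}_\infty$ component of the $S$-norm of $A$ without wasting either gain. Once this is in place, all other bounds follow with relatively standard bookkeeping from the pointwise symbol analysis and the decomposable calculus \eqref{decomp_holder}.
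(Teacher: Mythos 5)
Your overall strategy --- isolate the symbol $m=L^\omega_\mp\Delta^{-1}_{\omega^\perp}$, record its pointwise/$\xi$-derivative bounds of size $2^{-k}\theta^{-2-N}$, and couple this with decomposability for $A\cdot\omega$ --- is close in spirit to what the paper does (the paper reproves the decomposability of $A$ on angular sectors directly from the Strichartz and $X^{0,\frac12}_\infty$ components of $S_k$ rather than invoking \eqref{A_decomp2}). However, two of your steps contain genuine errors.

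The first concerns the passage from $r=\infty$ to $r<\infty$ in \eqref{psi_decomp2}. The bound \eqref{A_decomp2} only gives $DL^p L^\infty$; from an $L^\infty_x$ bound you \emph{cannot} obtain an $L^r_x$ bound for $r<\infty$ ``by a Bernstein estimate on the Fourier support,'' because Bernstein for frequency-localized functions only moves Lebesgue exponents \emph{up} ($L^{r_0}\hookrightarrow L^r$ with $r\ge r_0$, with a gain from the size of the support), never down. The extra factor $2^{-4k/r}\theta^{-3/r}$ you want is precisely what Bernstein delivers when starting at a \emph{lower} exponent: the paper first records the Strichartz bound for $A_k$ at the sharp admissible exponent $r_0$ with $\frac2q+\frac3{r_0}=\frac32$, and then Bernstein on each angular sector of width $\theta$ gives the factor $\theta^{3(\frac1{r_0}-\frac1r)}2^{4(\frac1{r_0}-\frac1r)k}$. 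The hypothesis $\frac2q+\frac3r\le\frac32$ in the lemma is exactly the condition $r\ge r_0$, i.e.\ that Bernstein runs the right way.

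The second concerns \eqref{psi_decomp2-mod}: the claim that ``cone geometry together with the modulation constraint pin $|\sigma\mp\omega\cdot\eta|\lesssim 2^{k+2j}$, saving an extra factor $2^{2j}$ in $m$'' is false, with the roles of the two null derivatives reversed. By \eqref{defpsi}, $\Psi_\pm$ is built with $L^\omega_\mp$; in the region $D^{\omega,\pm}_{cone}$ with $\theta=\angle(\omega,-\eta\,\mathrm{sgn}\,\sigma)$ one has $\eta\cdot\omega\approx-\mathrm{sgn}(\sigma)|\eta|$ and $|\sigma|\approx|\eta|$, so $\sigma\mp\eta\cdot\omega\approx 2\sigma\sim\pm 2^{k+1}$; this symbol is always $\sim 2^k$, not small. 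It is the \emph{other} null symbol $\sigma\pm\eta\cdot\omega$ (i.e.\ $L^\omega_\pm$) that the modulation restriction squeezes to $\lesssim 2^{k+2j}$ when $\theta\lesssim 2^j$, and that factor does not appear in $m$. Consequently $m$ gains nothing in $j$. The $j$-gain instead lives in the \emph{input}: the paper uses the $X^{1,\frac12}_\infty$ component of $S^1$ to bound $\|Q_{k+2j}A_k\|_{L^2}$ at $q=r=2$, and then Bernstein on angular sectors takes care of the remaining $q,r$. Your invocation of $X^{0,\frac12}_\infty$ on the input side is close to that, but the nonexistent $2^{2j}$ gain in $m$ is load-bearing in your numerology, so the argument as written does not close.
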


\begin{proof}
  Notice that the last three estimates follow from the first by summing
  over dyadic $2^{-\delta k} \leq \theta\lesssim 1$. For the first
  two bounds we interchange the $t$ integration and the $\omega$ summation
  to obtain:
  \[
  \begin{split}
    \lp{(\psi_k^{(\theta)},2^{-k}\nabla_{t,x}\psi_k^{(\theta)})}{DL^q(L^r)}
    \ \lesssim \ & \theta^{-2}2^{-k} \big(\sum_{\omega}
    \lp{\Pi^\omega_\theta(D)A \cdot
      \omega}{L^q(L^r)}^2\big)^\frac{1}{2} \ \notag \\ \ \lesssim \ &
    \theta^{-1}2^{-k} \big(\sum_{\omega} \lp{\Pi^\omega_\theta(D)A
    }{L^q(L^r)}^2\big)^\frac{1}{2},
  \end{split}
  \]
  where at the second step we have used the Coulomb gauge to gain
  another factor of $\theta$.

  Now we conclude the proof of \eqref{psi_decomp1}
using the Strichartz estimate component of the $S_k$ norms.  In four space
  dimensions the Strichartz sharp range is given by
  $\frac{2}{q}+\frac{3}{r_0}=\frac{3}{2}$.  Moreover, on an angular
  sector of size $\theta$ Bernstein's inequality gives the embedding
  $\Pi^\omega_\theta(D)P_k L^{r_0}\subseteq \theta^{3(\frac{1}{r_0} -
    \frac{1}{r})} 2^{4(\frac{1}{r_0}-\frac{1}{r})k}L^{r}$. Thus:
  \begin{equation}
    \big(\sum_{\omega}
    \lp{\Pi^\omega_\theta(D)A_k}{L^q(L^r)}^2\big)^\frac{1}{2} \ \lesssim \
    \theta^{\frac{3}{2}-\frac{2}{q}-\frac{3}r}2^{(1-\frac{1}{q}-\frac{4}r)k}\lp{A_k}{S_k} \ , \notag
  \end{equation}
  and  \eqref{psi_decomp1}  follows.

The argument for \eqref{psi_decomp1-mod} is simpler. The case $q=r=2$ is 
immediate using the $L^2$ bound coming from the $X^{1,\frac12}_{\infty}$ component
of the $S^1$ norm, and the transition to larger $q,r$ is done using Bernstein's inequality. 
\end{proof}

We wrap this section up by proving some additional symbol type bounds
for the phases $\Psi$. These involve the variation over the physical
space variables:

\begin{lemma}[Additional symbol bounds for $\Psi$]
  Let $\psi$ be as above. Then one has:
  \begin{align}
    |\Psi_{<k}(t,x;\xi) -\Psi_{<k}(s,y;\xi)|
    &\lesssim   \epsilon \log( 1+2^k (|t-s| + |x-y|)), \label{symbol1} \\
    |\Psi(t,x;\xi) -\Psi(s,y;\xi)|
    &\lesssim   \epsilon \log ( 1+ |t-s| + |x-y|) \label{symbol2} \\
    |\partial_\xi^\alpha( \Psi(t,x;\xi) -\Psi(s,y;\xi))| &\lesssim \
    \epsilon \la (t-s,x-y)\ra^{|\alpha -\frac12| \sigma}, 1 \leqslant
    \alpha \leqslant \sigma^{-1} . \label{symbol3}
  \end{align}
\end{lemma}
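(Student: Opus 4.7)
The unifying approach for all three bounds is to decompose the phase dyadically, $\Psi_{<k} = \sum_{h < k} \Psi_h$ via \eqref{eq:intident}, estimate each block difference $\Psi_h(t,x;\xi) - \Psi_h(s,y;\xi)$ in two complementary ways, and then sum geometrically. Setting $\rho := |t-s| + |x-y|$, each block admits (a) the trivial bound $|\Psi_h(t,x;\xi) - \Psi_h(s,y;\xi)| \lesssim \|\Psi_h(\cdot;\xi)\|_{L^\infty_{t,x}}$ and (b) the mean-value bound $|\Psi_h(t,x;\xi) - \Psi_h(s,y;\xi)| \lesssim \rho\,\|\nabla_{t,x} \Psi_h(\cdot;\xi)\|_{L^\infty_{t,x}}$. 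From \eqref{psi-inf-ft} together with the fact that $\Psi_h$ is spatially frequency-localized at $2^h$ (so Bernstein applies), I would extract the pointwise bounds $\|\Psi_h\|_{L^\infty} \lesssim c_h$ and $\|\nabla_{t,x}\Psi_h\|_{L^\infty} \lesssim 2^h c_h$, uniformly in $\xi$. Taking the minimum of (a) and (b) gives the master block bound $|\Psi_h(t,x;\xi) - \Psi_h(s,y;\xi)| \lesssim \min(1,\, 2^h\rho)\,c_h$.

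For \eqref{symbol1}, split the sum over $h < k$ at the threshold $h_* := -\log_2 \rho$. The low-frequency part $h \leq \min(h_*,k)$ is summed using (b) and gives a convergent geometric series bounded by $\epsilon\,\min(2^k\rho, 1)$. The intermediate range $h_* < h < k$ (nonempty only when $2^k\rho > 1$) is summed using (a), each term contributing $O(\epsilon)$, for a total of $\epsilon \log(2^k\rho)$. Combining the two pieces yields $\epsilon\log(1 + 2^k\rho)$. The bound \eqref{symbol2} is the special case $k = 0$ of \eqref{symbol1}, interpreting the unsubscripted $\Psi$ as $\Psi_{<0}$.

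For \eqref{symbol3} I would repeat the MVT/trivial interpolation while tracking the cost of the $\xi$-derivatives. The $\xi$-dependence of $\Psi_h$ enters through $\omega = \xi/|\xi|$, which appears in the smooth factors $L^\omega_\mp$, $\Delta^{-1}_{\omega^\perp}$ (whose $\xi$-derivatives are $O(1)$) and in the angular cutoff $\Pi^{\omega,\pm}_{>|\eta|^\delta}$ restricting $\eta$ to make an angle $\gtrsim 2^{\delta h}$ with $\omega$. Naively, each $\xi$-derivative hitting this cutoff costs $2^{-\delta h}$, producing an inflated pointwise bound $\|\partial_\xi^\alpha\Psi_h\|_{L^\infty} \lesssim 2^{-\alpha\sigma h}\epsilon$. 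Balancing (a) vs.\ (b) with this inflated bound and optimizing at $h = h_*$ yields a polynomial control of the form $\langle \rho\rangle^{\beta(\alpha)\sigma}$, with the constraint $\alpha \leq \sigma^{-1}$ ensuring convergence of the geometric series appearing in the MVT sum.

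The main obstacle is extracting the sharp exponent $|\alpha - \tfrac12|\sigma$ rather than the naive $\alpha\sigma$. For this I would invoke the refined $L^2$ symbol bound $\|\nabla_\xi^N \nabla^2 \Psi_h\|_{L^2} \lesssim 2^{-(N+1)\delta h}c_h$ in place of counting derivatives of the cutoff directly, and then convert to $L^\infty$ via Bernstein on the effective angular region of scale $2^{\delta h}$. The half-derivative improvement comes from the fact that, thanks to the Coulomb-gauge cancellation already exploited to derive \eqref{psi-2-ft}, the symbol $\Psi_h$ carries one ``spare'' angular derivative beyond what \eqref{defpsi} suggests; transferring half of this to the $\xi$-variable via Bernstein on an angular cap of size $2^{\delta h}$ in $\mathbb S^3$ is precisely the source of the $\tfrac12$ shift in the exponent. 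With this refinement in hand, the same split at $h_*$ closes the bound \eqref{symbol3}.
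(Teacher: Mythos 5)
Your proof of \eqref{symbol1} and \eqref{symbol2} is correct and essentially identical to the paper's: decompose into dyadic blocks $\Psi_h$, combine the trivial bound $\|\Psi_h\|_{L^\infty}\lesssim \epsilon$ with the mean-value bound $\|\nabla_{t,x}\Psi_h\|_{L^\infty}\lesssim 2^h\epsilon$, and choose the threshold $h_*$ with $2^{h_*}\sim\rho^{-1}$ to produce the logarithm.

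For \eqref{symbol3} your proposed mechanism has a genuine gap. Running the quoted $L^2$ bound $\|\nabla_\xi^N\nabla^2\Psi_h\|_{L^2}\lesssim 2^{-(N+1)\sigma h}c_h$ through Bernstein on the full frequency-$2^h$ annulus only yields $\|\nabla_\xi^N\Psi_h\|_{L^\infty}\lesssim 2^{-(N+1)\sigma h}c_h$, and the same threshold optimization then gives the exponent $(N+1)\sigma$ rather than the stated $(N-\tfrac12)\sigma$ --- off by a factor $\rho^{\frac32\sigma}$. The half-derivative improvement genuinely requires the finer angular decomposition that the paper uses. Write $\Psi_h = \sum_{\theta>2^{\sigma h}}\psi^{(\theta)}_h$, where $\psi^{(\theta)}_h$ is supported on an angular sector of opening $\theta$. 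The operator factor $L^\omega_\mp\Delta^{-1}_{\omega^\perp}$ together with the Coulomb-gauge contraction $A_j\omega_j$ gains $\theta^{-1}2^{-h}$, while Bernstein on the sector (spatial-frequency measure $\sim 2^{4h}\theta^3$) applied to the energy bound $\|A_h\|_{L^2}\lesssim 2^{-h}\epsilon$ gains $2^{2h}\theta^{3/2}\cdot 2^{-h}\epsilon$; multiplying gives $|\psi^{(\theta)}_h|\lesssim\theta^{1/2}\epsilon$ and hence $|\partial_\xi^\alpha\psi^{(\theta)}_h|\lesssim\theta^{1/2-\alpha}\epsilon$. Summing over $\theta>2^{\sigma h}$ is dominated by the smallest angle and gives $|\partial_\xi^\alpha\Psi_h|\lesssim 2^{-\sigma h(\alpha-1/2)}\epsilon$ together with its $\nabla_{t,x}$ analogue with an extra $2^h$; the two-sided optimization from \eqref{symbol1} then closes, and the hypothesis $\alpha\leq\sigma^{-1}$ is exactly what makes the mean-value half of the sum converge. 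Your phrase about ``Bernstein on an angular cap'' gestures at this, but as written you never localize $\Psi_h$ to angular sectors, and the $L^2$ bound you invoke (which sees the whole frequency annulus) cannot by itself recover the sector Bernstein gain that produces the $\tfrac12$ shift.
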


\begin{proof}
  We decompose as before
  \[
  \psi_{<k}(t,x;\xi) = \sum_{j < k} \sum_{\theta > 2^{\sigma j}}
  \psi^{(\theta)}_j(t,x,\xi)
  \]
  For each fixed $\theta$ and $j$ we have by the definition of $\psi$
  and the Coulomb gauge condition
  \[
  |\psi^{(\theta)}_j(t,x,\xi)| \lesssim \theta^{-1} 2^{-j}
  \sup_{\omega} \|\Pi_\theta^\omega A_j\|_{L^\infty}
  \]
  Then by energy estimates for $A$ and Bernstein's inequality we
  obtain
  \begin{equation} \label{psij} |\psi^{(\theta)}_j(t,x,\xi)| \lesssim
    \theta^\frac12 \| A_j[0]\|_{H^1 \times L^2}, \qquad
    |\psi_j(t,x,\xi)| \lesssim \| A_j[0]\|_{H^1 \times L^2}
  \end{equation}
  A similar argument leads to
  \begin{equation}\label{psijx}
    |\partial_{t,x} 
    \psi^{(\theta)}_j(t,x,\xi)| \lesssim  2^j \theta^\frac12 \| A_j[0]\|_{H^1 \times L^2},
    \qquad 
    |\partial_{t,x}  \psi_j(t,x,\xi)| \lesssim    2^j  \| A_j[0]\|_{H^1 \times L^2}
  \end{equation}
  Differentiating with respect to $\xi$ yields $\theta^{-1}$ factors,
  \[
  |\partial_\xi^\alpha \psi^{(\theta)}_j(t,x,\xi)| \lesssim
  \theta^{\frac12-|\alpha|} \| A_j[0]\|_{H^1 \times L^2}, \qquad
  |\partial_{x,t}
  \partial_\xi^\alpha \psi^{(\theta)}_j(t,x,\xi)| \lesssim 2^j
  \theta^{\frac12-|\alpha|} \| A_j[0]\|_{H^1 \times L^2}.
  \]
  For the bound \eqref{symbol1} we use both \eqref{psij} and
  \eqref{psijx} to write for $j \leq k$
  \[
  |\psi_{<k}(t,x;\xi) -\psi_{<k}(s,y;\xi)| \lesssim 2^j(|t-s| + |x-y|)
  + |k-j|
  \]
  and then optimize the choice of $j$.

  The proof of \eqref{symbol3} is similar.

\end{proof}

\subsection{ Fixed time bounds for $O$}

Here we transfer the above bounds from $\Psi$ to $O$. 
Precisely, we have the following 

\begin{lemma}
The following  estimates hold for $O$, where $\perp$ below refers to derivatives in the plane $\omega^{\perp}$:
\begin{equation}\label{l2forO}
\|P_{k'}  O_{;\perp}\|_{L^2} \lesssim 2^{-k'} c_{k'} 2^{-N (k'-k)_+}
\end{equation}
\begin{equation}\label{l2forOa}
\|P_{k'}  O_{;x,t}\|_{L^2} \lesssim 2^{(-\delta-1)k'} c_{k'} 2^{-N (k'-k)_+}
\end{equation}
 
Estimates with one derivative less  hold for $\partial_k O_{;x,t}$.
\end{lemma}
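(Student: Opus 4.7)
The plan is to propagate the fixed-time $L^{2}$ bounds on $\Psi_{k}$ from the previous lemma through the defining ODE \eqref{psi-def}; all norms below are at fixed $(t,\xi)$. First I would differentiate the relation $\partial_{k} O_{<k} = \Psi_{k} O_{<k}$ in any direction $\partial_{i}$, $i \in \{\omega^{\perp}, x, t\}$, and apply the product rule to obtain the linear evolution
\begin{equation*}
\partial_{k} O_{;i} = \partial_{i} \Psi_{k} + [\Psi_{k}, O_{;i}], \qquad O_{;i}|_{k = -\infty} = 0,
\end{equation*}
the initial condition coming from the normalization imposed in the construction of $O$. Integrating this linear ODE on $\g$ produces the representation
\begin{equation*}
O_{;i}(k) = \int_{-\infty}^{k} Ad(U(h,k))\, \partial_{i} \Psi_{h}\, dh, \qquad U(h,k) := O(k) O(h)^{-1},
\end{equation*}
where $U$ itself satisfies $\partial_{k} U = \Psi_{k} U$, $U(h,h) = I$.

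Next I would iteratively expand $Ad(U(h,k)) = I + \int_{h}^{k} ad(\Psi_{s})\, Ad(U(h,s))\, ds$, producing a convergent multilinear (time-ordered) series whose $m$-th term is a nested commutator $[\Psi_{s_{m}}, [\ldots, [\Psi_{s_{1}}, \partial_{i} \Psi_{h}]\ldots]]$ integrated over $h < s_{1} < \ldots < s_{m} < k$. I would bound each such term in $L^{2}$ by H\"older, placing the innermost factor $\partial_{i}\Psi_{h}$ in $L^{2}$ (using $\|\partial_{\omega^{\perp}}\Psi_{h}\|_{L^{2}} \lesssim 2^{-h} c_{h}$ from \eqref{psi-2-ft} and $\|\partial_{x,t}\Psi_{h}\|_{L^{2}} \lesssim 2^{(-\delta-1)h} c_{h}$ from \eqref{psi-2-fta}) and each outer $\Psi_{s_{j}}$ in $L^{\infty}$ using $\|\Psi_{s}\|_{L^{\infty}} \lesssim c_{s}$ from \eqref{psi-inf-ft}. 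The nested $s_{j}$-integration together with the smallness $\sum_{h} c_{h} \lesssim \epsilon$ contributes $\epsilon^{m}$, so the geometric sum in $m$ converges to a constant multiple of the leading ($m=0$) contribution.

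For the frequency localization with $P_{k'}$, the leading term $\int_{-\infty}^{k} \partial_{i} \Psi_{h}\, dh$ has $\Psi_{h}$ Fourier-supported at $|\eta| \sim 2^{h}$, so $P_{k'}$ extracts $h \approx k'$, giving exactly $\|\partial_{i}\Psi_{k'}\|_{L^{2}}$ when $k' \leq k$ and vanishing when $k' > k$. The multilinear corrections involve products of $\Psi$'s at frequencies $\leq 2^{k}$; for $k' \leq k$ they are absorbed by the main bound up to $O(\epsilon)$ factors via the envelope property of $c$, while for $k' > k$ the symbol regularity of the $\Psi_{s}$'s in $\xi$ combined with iterated integration by parts yields rapid decay $2^{-N(k'-k)_{+}}$ for arbitrary $N$.

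Finally, for $\partial_{k} O_{;x,t}$, the ODE itself gives the claimed ``one derivative less'' bound directly: $\partial_{k} O_{;x,t} = \partial_{x,t}\Psi_{k} + [\Psi_{k}, O_{;x,t}]$, where the first term satisfies $\|P_{k'}\partial_{x,t}\Psi_{k}\|_{L^{2}} \lesssim 2^{-\delta k} c_{k}$ (and is essentially localized at $k' \approx k$), and the commutator is lower-order by the $L^{\infty}$ bound on $\Psi_{k}$ combined with the $L^{2}$ estimate on $O_{;x,t}$ just established. The main obstacle throughout is extracting the sharp frequency envelope factor $c_{k'}$ from the multilinear expansion without logarithmic losses; this is handled by the absolute summability $\sum c_{s} \lesssim \epsilon$ and by choosing in every term the ``last-factor-in-$L^{2}$, all-outer-factors-in-$L^{\infty}$'' split, which preserves the envelope at the innermost frequency.
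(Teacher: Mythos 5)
Your overall starting point is right — differentiate the ODE \eqref{psi-def} to get $\partial_{k} O_{;i} = \partial_{i}\Psi_{k} + [\Psi_{k}, O_{;i}]$ and propagate the fixed-time bounds on $\Psi$ — and this is also where the paper begins. However, your execution has two genuine gaps.

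First, your assertion that the smallness ``$\sum_{h} c_{h} \lesssim \epsilon$'' controls the nested $s_{j}$-integration is not available. Admissible frequency envelopes in this paper satisfy only $\|c\|_{\ell^{2}} \lesssim \epsilon$, not $\ell^{1}$; indeed the slowly-varying condition permits $c_{h} \to \epsilon$ (or even growth as $h \to -\infty$). By Cauchy--Schwarz one only gets $\int_{l}^{k} c_{s}\,ds \lesssim \epsilon |k-l|^{1/2}$, so your time-ordered series is dominated term-by-term by $\exp(\epsilon|k-h|^{1/2})$, which is unbounded over the integration range $h \in (-\infty, k)$. The geometric factor $\epsilon^{m}$ you claim simply does not arise from the $L^{\infty}$ bound $\|\Psi_{s}\|_{L^{\infty}} \lesssim c_{s}$ alone.

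Second, your proposed H\"older split — ``innermost factor $\partial_{i}\Psi_{h}$ in $L^{2}$, all outer factors in $L^{\infty}$'' — produces a divergent $h$-integral even after localizing with $P_{k'}$. The $L^{2}$ bound $\|\partial_{\perp}\Psi_{h}\|_{L^{2}} \lesssim 2^{-h}c_{h}$ grows as $h \to -\infty$; when $s_{m} \approx k'$ and $h < k'$ (the dominant low$\times$high frequency balance), the output at frequency $2^{k'}$ is not killed by $P_{k'}$ and the $h$-integral of $2^{-h}c_{h}$ over $(-\infty, k')$ diverges. The split must be reversed: the \emph{highest}-frequency factor goes in $L^{2}$, the low-frequency factors in sup-type norms, so that the low-frequency integrals carry a geometric $2^{h}$-gain. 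The paper achieves this by a bootstrapping chain of intermediate mixed Lebesgue bounds in the $(\omega, \omega^{\perp})$ coordinates — $L^{\infty}$, $L^{2}L^{12}$, $L^{\infty}L^{6}$, $L^{2}L^{6}$, $L^{\infty}L^{3}$ — before reaching the final $L^{2}$ bound. In that final step the commutator is estimated via $\|\Psi_{h}\|_{L^{2}L^{6}}\|P_{k'}O_{;\perp}\|_{L^{\infty}L^{3}}$, whose rapid off-diagonal decay $2^{-N(k'-h)}$ for $h < k'$ is exactly what rescues the $h$-integration without $\ell^{1}$-summability of the envelope. Your Dyson-series expansion could in principle be organized to work, but only if the H\"older split is chosen frequency-by-frequency and the off-diagonal decay from the frequency projector is exploited explicitly in each multilinear term, which is precisely the structure your proposal is missing.
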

\begin{proof}
We treat the case of spatial derivatives, time derivatives being handled similarly. 
Our strategy will be to use integration in $h$ and reiteration in the commutation relation 
\begin{equation}\label{com-oh}
\frac{d}{dh} O_{<h;x} = \Psi_{h,x} + [\Psi_h,O_{<h;x}]
\end{equation}
as well as differentiated forms of it, in order to build up successively stronger bounds 
for the derivatives of $O_{<h}$. In this section, mixed Lebesgue spaces $L^p L^q$ refer to the coordinates $\omega, \omega^{\perp}$ for the $x$-plane. 

\bigskip

{\em 1. $L^\infty$ bounds.}  A-priori we have 
\[
\| \Psi_{k}\|_{L^\infty} \lesssim c_k.
\]
Then integration from $-\infty$  with respect to $h$ in \eqref{com-oh} gives
\[
\|  O_{<k;x}\|_{L^\infty} \lesssim 2^{k} c_k
\]
Repeated differentiation similarly leads to a better high frequency bound
\[
\|\partial_x^{m-1} O_{<k;x}\|_{L^\infty} \lesssim 2^{mk} c_k 
\]

\bigskip

{\em 2. $L^2 L^{12}$ bounds.} Here we start with
\[
\| \Psi_k\|_{L^2 L^{12}} \lesssim 2^{-\frac{3k}4} c_k
\]
The same argument as above using \eqref{com-oh}
leads to 
\[
\|  O_{<k;x}\|_{ L^2 L^{12} } \lesssim 2^{\frac{k}4} c_k
\qquad
\|\partial_x^{m-1} O_{<k;x}\|_{L^2 L^{12}} \lesssim 2^{(m-\frac34)k} c_k 
\]
\bigskip

{\em 3. $L^\infty L^{6}$ bounds.} Here we start with
\[
\| \partial_{\perp} \Psi_k\|_{L^\infty L^{6}} \lesssim 2^{\frac12 k} c_k
\]
As above, using \eqref{com-oh} but only for $\partial_{\perp}$ derivatives
we obtain
\[
\|  O_{<k;\perp}\|_{ L^\infty L^{6} } \lesssim 2^{\frac{k}2} c_k
\qquad
\|\partial_x^{m-1} O_{<k;\perp}\|_{L^\infty L^{6}} \lesssim 2^{(m-\frac12)k} c_k 
\]
\bigskip

{\em 4. $L^2 L^6$ bounds.} For this we use the bound
\[
\| \Psi_k\|_{L^2 L^{6}} \lesssim 2^{-k} c_k
\]
We apply a Littlewood-Paley projector $P_{k'}$ in \eqref{com-oh}
and integrate in $h$, 
\[
\| P_{k'} O_{<k;x} \|_{L^2 L^6} \lesssim \int_{-\infty}^k \| P_{k'}  \Psi_{h,x}\|_{L^2 L^6}
 + \| P_{k'} [\Psi_h,O_{<h;x}]\|_{L^2 L^6} \, dh
\]
The first term on the right contributes only when $h = k+O(1)$. Thus we consider 
two scenarios. If $k < k'$ then we combine directly the high 
frequency $L^\infty$ bound for $O_{;x}$ with the $L^2 L^6$ bound for $\Psi_{h,x}$
to obtain the rapid decay
\[
\|  P_{k'} O_{<k;x} \|_{L^2 L^6} \lesssim c_{k'} 2^{-N(k'-k)}, \qquad k < k'
\]
If $k \geq k'$ then we retain the contribution of the first term when
$h = k'+O(1)$, and in addition we bound the second term for larger $h
> k'$ using Bernstein's inequality as follows:
\[
\|  P_{k'} [\Psi_h,O_{<h;x}]\|_{L^2 L^6} \lesssim 2^{\frac34 k'} \| \Psi_h\|_{L^2 L^{6}}
\|O_{<h;x}]\|_{L^2 L^{12}} \lesssim c_h^2 2^{\frac34 (k'-h)} 
\]
Taking advantage of the decay in $h$, we obtain the desired bound
\[
\|  P_{k'} O_{<k;x} \|_{L^2 L^6} \lesssim c_{k'}, \qquad k \geq k'.
\]

\bigskip

{\em 5. $L^\infty L^3$ bounds.}
For this we use the bound
\[
\| \partial_{\perp} \Psi_k\|_{L^\infty L^{3}} \lesssim 2^{-k} c_k
\]
and argue as in the $L^2 L^6$ case. The only difference arises in the 
treatment of the bilinear term for $h \geq k'$, namely
\[
\|  P_{k'} [\Psi_h,O_{<h;\perp}]\|_{L^\infty L^3} \lesssim 2^{\frac12 k'} \| \Psi_h\|_{L^2 L^{6}}
\|O_{<h;\perp}]\|_{L^\infty L^6} \lesssim c_h^2 2^{\frac12 (k'-h)} 
\]
We obtain
\[
\|  P_{k'} O_{<k;\perp} \|_{L^\infty L^3} \lesssim c_{k'} 2^{-N(k'-k)_+}
\]



\bigskip

{\em 6. $L^2$ bounds.}
In this final step we use the equation
\[
\frac{d}{dh} P_{k'}  O_{<h;\perp} = P_{k'} \Psi_{h,\perp}
+ P_{k'} [\Psi_{h},O_{;\perp}]  
\]
take $L^2$ norms and integrate with respect to $h$.
For $h < k'$ the first term on the right vanishes, while for the second
we have
\[
\| P_{k'} [\Psi_{h},O_{<h;\perp}] \|_{L^2} \lesssim \| \Psi_h\|_{L^2 L^6} 
\| P_{k'}O_{;\perp} \|_{L^\infty L^3} 
\lesssim c_h^2 2^{-h} 2^{-N(k'-h)}
\]
Integrating we obtain 
\[
\| P_{k'} O_{<k;\perp} \|_{L^2} \lesssim c_{k'} 2^{-k'}  2^{-N(k'-k)}, \qquad k < k'. 
\]

It remains to consider the case $k > k'$. The first term $ P_{k'}
\Psi_{h,\perp}$ is nonzero only if $h = k'+O(1)$, in which case it is
easily estimated using \eqref{psi-2-ft}. For the second term, on the
other hand, we have
\[
\| P_{k'} [\Psi_{h},O_{<h;\perp}] \|_{L^2} \lesssim \| \Psi_h\|_{L^2 L^6} \| O_{;\perp} \|_{L^\infty L^3} 
\lesssim c_h^2 2^{-h}
\]
which is easily integrated for $h > k'$. Thus the proof of \eqref{l2forO} is complete.

\bigskip

{\em 7. Proof of the bound \eqref{l2forOa}.} This proof is largely similar,
so we outline the change. In fact, in  Step 5, a $2^{-\frac12 \delta k}$ loss in the $\partial_x \psi$ bound generates a similar loss
for $O_{<k;x}$.
The same loss propagates directly to Step 6.

\subsection{ Fixed time bounds for $O_{;\xi}$}

Differentiating the functions $\Psi_k$ with respect to $\xi$ looses 
a factor of $\angle(\xi,\eta)$. Due to the angular separation, this 
factor is at most $2^{\delta k}$. Thus, the bounds for $O_{<k;\xi}$ 
are similarly related to the bounds for $O_{<k}$:

\begin{lemma}
We have the pointwise bounds
\begin{equation}
|\partial_\xi^n O_{<k;x}| \lesssim 2^{k(1-n\delta)},\,n\delta < 1,
\end{equation}
as well as the $L^2$ bounds
\begin{equation}\label{pdxio2}
\|P_{k'}\partial_\xi^n O_{<k;x}\|_{L^2} \lesssim 2^{k(-1-n\delta)} 2^{-N(k'-k)_+}
\end{equation}
\end{lemma}

The evolution equation for $O_{;\xi} = O_\xi O^{-1}$ is 
\[
\frac{d}{dh}O_{<h;\xi} = \Psi_{h,\xi} + [\Psi_h,O_{<h;\xi}]
\]
We have a similar relation for $O_x$,
\[
\frac{d}{dh} O_{<h;x} = \Psi_{h,x} + [\Psi_h,O_{<h;x}]
\]
Differentiating the latter with respect to $\xi$ yields
\[
\frac{d}{dh} \partial_{\xi} O_{<h;x} = \partial_\xi \Psi_{h,x}  + [\partial_\xi \Psi_h,O_{<h;x}]
+ [\Psi_h,\partial_\xi O_{<h;x}]
\]
Since 
\[
 |\Psi_{h,x}| + |O_{<h;x}|\lesssim 2^h, \qquad |\partial_\xi \Psi_{h,x}| \lesssim 2^{h(1-\delta)}
\]
we can integrate to obtain  
\[
|\partial_\xi O_{<h;x}| \lesssim  2^{h(1-\delta)}
\]
We further have $L^2$ bounds
\[
\| \partial_\xi \Psi_{h,x}\|_{L^2}  \lesssim  2^{h(-1-2\delta)}, \qquad
 \| \partial_\xi \Psi_{h}\|_{L^2}  \lesssim  2^{h(-2-2\delta)}, \qquad
\| \Psi_{h}\|_{L^2}  \lesssim  2^{h(-2-\delta)}
\]
with extra gain for further $x$ derivatives.
We can transfer these bounds to $\partial_\xi O_{<h;x}$ by using Littlewood-Paley
projectors in $x$ in the above evolution, to obtain \eqref{pdxio2}.

We also have the commutation relation
\begin{equation}\label{oxi:eq}
\partial_\xi O_{<h;x} - \partial_x O_{<h;\xi} = [ O_{<h;x},O_{<h;\xi}]
\end{equation}
Up to this point $O$ is only uniquely determined up to a $\xi$ dependent 
conjugation, 
\[
O_{<h}(x,\xi)  \to O_{<h}(x,\xi)P(\xi)
\]
At the level of $O_{<h;\xi}$ this translates to the gauge freedom
\[
 O_{<h;\xi}(x,\xi) \to O_{<h;\xi}(x,\xi) + O_{<h}(x,\xi) P_\xi P^{-1} O_{<h}^{-1}(x,\xi)
\]
Fixing a choice of $P$ is not necessary, as all estimates we need are invariant under such a change.

\subsection{Decomposable bounds for $O_{;x}$, $O_{;t}$}  

Our goal here is to transfer decomposability bounds from $\Psi$ to $O$.
Precisely, we have

\begin{lemma}\label{lem:Obounds1}
We have the following estimates:
 \begin{align}
    \lp{O_{<k;x},O_{<k;t}} {DL^q(L^\infty)} \
    &\lesssim \ 2^{(1-\frac{1}{q})k}\epsilon
    \ ,  & q>4 \ , \label{Ox_decomp1}\\
    \lp{O_{<k;x},O_{<k;t}} {DL^2(L^\infty)} \ &\lesssim \ 2^{\frac12(1-\delta) k}\epsilon \ 
    , \label{Ox_decomp3}
  \end{align}
\end{lemma}

\begin{proof}
  We prove the bounds for $O_{<k;x}$; those for $O_{<k;t}$ are
  identical. We use the evolution for $O_{k;x}$, namely 
\[
\partial_k O_{<k;x} = \Psi_{k,x} + [\Psi_k, O_{<k;x}]
\]
We proceed in several stages:
\medskip

\step{1}{A weaker $DL^\infty L^\infty$ bound}
Using the pointwise bounds on $O_{;x}$ and its $\xi$ derivatives, we directly
conclude that (for $\delta>0$ small enough)
\[
\| O_{<k;x}\|_{DL^\infty L^\infty} \lesssim 2^{(1-n\delta) k},\,n = 40. 
\]
\medskip

\step{2}{The full $DL^\infty L^\infty$ bound}
Using the above evolution we obtain the integral bound
\[
\| O_{<k;x}\|_{DL^\infty L^\infty} \lesssim \| O_{<l;x}\|_{DL^\infty L^\infty}+ 
\int_{l}^k  \| \Psi_{h,x}\|_{DL^\infty L^\infty} +  \| \Psi_{h}\|_{DL^\infty L^\infty} 
\|  O_{<h;x}\|_{DL^\infty L^\infty} dh
\]
By Gronwall's inequality this gives
\begin{equation}\label{gron(Ox)}
\| O_{<k;x}\|_{DL^\infty L^\infty} \lesssim \| O_{<l;x}\|_{DL^\infty L^\infty}
e^{\int_l^k  c_h  dh} + \int_{l}^k 2^{h} c_h e^{\int_{h}^k c_{h_1} d h_1} dh
\end{equation}
But by Cauchy-Schwarz we have
\[
\int_l^k  c_h  dh \lesssim |k-l|^\frac12
\]
Thus, using the weaker $DL^\infty L^\infty$ bound, the  first term in \eqref{gron(Ox)}
decays to zero as $l \to -\infty$. On the other hand, the leading contribution 
in the second term in \eqref{gron(Ox)} comes from $h = k-O(1)$. 
Hence we obtain the desired bound.
\[
\| O_{<k;x}\|_{DL^\infty L^\infty} \lesssim 2^k c_k
\]
\medskip

\step{3}{The $DL^q L^\infty$ bound} Using again the above evolution
and the fact that, by construction, $\lim_{k \to 0} O_{<k;x} = 0$ we
write
\[
O_{<k;x} = \int_{-\infty}^k  \Psi_{h,x} + [\Psi_h, O_{<h;x}] dh
\]
Then we combine the $DL^q L^\infty$ decomposability  bound \eqref{psi_decomp1} for   
$\Psi_h$ with the previously established $DL^\infty L^\infty$ bound for $O_{<h;x}$.

\medskip

\step{4}{The $DL^2 L^\infty$ bound} We proceed as in the previous
step, but using the bound \eqref{psi_decomp1} for $\Psi_h$ instead.

\end{proof}

\subsection{ Difference bounds for $O$}

Here we seek to compare $O_{<k}(t,x,\xi) $ with $O_{<k}(s,y,\xi)$. Since both are 
elements of the Lie group $\G$, it is natural (and most useful in the sequel)
to look at the product $O_{<k}(t,x,\xi) O^{-1}_{<k}(s,y,\xi)$. We have

\begin{lemma}[Difference bounds for $O$]
  Let $O$ be as above. Then one has:
  \begin{align}
    d(O_{<k}(t,x,\xi) O^{-1}_{<k}(s,y,\xi), Id)
    &\lesssim   \epsilon \log( 1+2^k (|t-s| + |x-y|)), \label{symbol1O} \\
    d(O(t,x,\xi) O^{-1}(s,y,\xi),Id)
    &\lesssim   \epsilon \log ( 1+ |t-s| + |x-y|) \label{symbol2O} \\
  |\partial_\xi^n (O(t,x,\xi) O^{-1}(s,y,\xi))_{;\xi}| & \lesssim  \langle (t-s,x-y) \rangle^{n \delta}
. \label{symbol3O}
  \end{align}
\end{lemma}
\begin{proof}
For the first two bounds we use the $Ad(O^{-1}(t,x,\xi))$ to interchange the order
and estimate instead the distance $ d( O^{-1}_{<k}(s,y,\xi) O_{<k}(t,x,\xi), Id)$.
This vanishes as $k \to -\infty$, therefore we can write
\[
d( O^{-1}_{<k}(s,y,\xi) O_{<k}(t,x,\xi), Id) \lesssim \int_{-\infty}^k 
|(O^{-1}_{<h}(s,y,\xi) O_{<h}(t,x,\xi))_{;h}| dh
\]
But we have
\[
 (O^{-1}_{<h}(s,y,\xi) O_{<h}(t,x,\xi))_{;h} = O^{-1}_{<h}(t,x,\xi) 
(\Psi_h(t,x,\xi) - \Psi_h(s,y,\xi)) O_{<h}(s,y,\xi) 
\]
so we obtain
\[
d( O^{-1}_{<k}(s,y,\xi) O_{<k}(t,x,\xi), Id) \lesssim \int_{-\infty}^k
| \Psi_h(t,x,\xi) - \Psi_h(s,y,\xi)| dh
\]
For $\Psi_h$ we have the bound 
\[
| \Psi_h(t,x,\xi) - \Psi_h(s,y,\xi)| \lesssim \epsilon \min\{1, 2^h(|x-y|+|t-s|)\}
\]
Thus the bounds \eqref{symbol1O} and \eqref{symbol2O} follow after 
dyadic integration with respect to $h$.

\bigskip

 For the third bound \eqref{symbol3O} we denote 
$V_{<k} = O^{-1}_{<k}(t,x,\xi) O_{<k}(s,y,\xi)$,  and proceed in two steps. For the first step
we fix $k$, and show that
\begin{equation}\label{get-xiO}
  |\partial_\xi^n V_{<k;\xi}|  \lesssim  (|t-s|+|x-y|) 2^{k(1- n \delta)}, \qquad 2^k |x-y| \lesssim 1, 
\quad n \geq 0.
\end{equation}
This bound is favorable provided that $k$ is small enough.
In the second step, we extent the range of $k$ for which \eqref{symbol3O} holds
by evaluating the $k$ derivative of $\partial_\xi^n V_{<k;\xi}$.

We now proceed with the first step, where we will crucially use the bound
\begin{equation}\label{use-xiO}
|\partial_\xi^{n} O_{<k;x} (t,x,\xi)|  \lesssim  2^{k(1- \delta(\frac12+n))},
\end{equation}
see \eqref{pdxio2}.  
The expression $V_{<k;\xi}$ vanishes if $x=y, t = s$ so it suffices to estimate its 
$x, t$ derivatives; below we do so for the $x$-derivatives, with similar estimates applying to the $t$-derivatives: 
\[
\partial_x \partial_\xi^{n} V_{<k;\xi} =  \partial_\xi^{n+1} V_{<k;x} +   \partial_\xi^{n} [V_{<k;x},
 V_{<k;\xi}]
\]
for which we use $V_{<k;x}= O_{<k;x}(t,x,\xi)$ to  rewrite it as 
\[
\partial_y \partial_\xi^{n} V_{<k;\xi} -  [V_{<k;y}, \partial_\xi^{n} V_{<k;\xi}]
=  \partial_\xi^{n+1} O_{<k;x}  + \sum_{j=1}^{n}[\partial_\xi^j O_{<k;x},
  \partial_\xi^{n-j} V_{<k;\xi}].
\]
The last term is absent if $n=0$, so the bound \eqref{get-xiO} follows
directly from \eqref{use-xiO} by integration.  Finally we close by
induction integrating over $x$, estimating
\[
|[\partial_\xi^j O_{<k;x}, \partial_\xi^{n-j} V_{<k;\xi}]| \lesssim 
 2^{k(1+ \delta(\frac12-j))}  |x-y| 2^{k(1 - \delta(\frac12+(n-j)))} 
= 2^{k}|x-y| 2^{k(1 - \delta n)} 
\]
So far the bound \eqref{symbol3O} is established in the range $ 2^{k}|x-y| \lesssim 1$.
To extend it we forget about the distance between $x$ and $y$ and integrate 
instead with respect to $l$ (the new $k$). First write
\[
V_{<l} = W_{<l}(x) V_{<k} W^{-1}_{<l}(y)
\]
where 
\[
 W_{<l} = O_{<l}^{-1} O_{<k} 
\]
We have 
\[
V_{<l;\xi} = - W_{<l}(x)  V_{<k}^{-1}  W_{<l}^{-1}(y)W_{<l; \xi }(y) W_{<l}(y) V_{<k} W^{-1}_{<l}(x) + W_{<l}(x) V_{<k;\xi} W^{-1}_{<l}(y) +
W_{<l;\xi} (x)
\]
so repeated differentiation shows that it suffices 
to bound 
\begin{equation}\label{W_l}
|\partial_\xi^{n} W_{<l; \xi }(x)| \lesssim   2^{- k \delta(\frac12+n)}, \qquad  l > k, \ \  n \geq 0
\end{equation}
 For this we follow the previous strategy, writing
\[
\partial_l \partial_\xi^{n} W_{<l;\xi} =  \partial_\xi^{n+1} O_{<l;l} +   \partial_\xi^{n} [O_{<l;l},
 W_{<l;\xi}]
\]
which leads to
\[
\partial_l \partial_\xi^{n} W_{<l;\xi} -  [\Psi_{l}, \partial_\xi^{n} W_{<l;\xi}]
=  \partial_\xi^{n+1} \Psi_l +   \sum_{j=1}^{n}[\partial_\xi^j \Psi_l,
  \partial_\xi^{n-j} W_{<l;\xi}]
\]
Using the bounds for $\Psi$ we can inductively close \eqref{W_l}.

\end{proof}

\section{$L^2$ bounds for the parametrix}
\label{s:l2}
In this section we establish a number of $L^2$ bounds for the renormalization operators
and the parametrix.    In the last part we prove the bounds \eqref{N_mapping},
\eqref{dtS_mapping}, \eqref{S_mapping} and \eqref{parametrix_bound}.  Throughout the section we
assume that $A$ is a Yang-Mills wave with $\| A\|_S \ll 1$ and
frequency envelope $c_k$. We fix the $\pm$ sign to $+$ and drop it from the 
notations. Also, we shall consider unit frequencies, and put $O$ instead of $O_{<0}$. We split the argument across several subsections.


\subsection{Oscillatory integral estimates}

We first observe that on one hand our parametrix involves operators of the form 
\[
T^a = Op(Ad(O^{\pm}))(t,x,D) e^{\pm i(t-s) |D|} a(|D|) Op(Ad(O^\pm))(D,s,y)
\]
where $a$ is localized at frequency $1$.
On the other hand, arguing in $TT^*$ fashion in order to prove various
$L^2$ estimates involving the operators $Op(Ad(O(t,x,D))$ and
$Op(Ad(O_{<0}(t,x,D)^*))$, we need to consider bounds for similar operators
in the special case when $t = s$.

The kernel of the operator $T_a$ is given by the oscillatory integral
\[
K^aF(t,x) = \int a(\xi) e^{\pm i(t-s)|\xi|} e^{i \xi(x-y)}(O(t,x,\xi)
O^{-1}(s,y,\xi)) F(s,y) (O(t,x,\xi) O^{-1}(s,y,\xi))^{-1} d\xi
\]
Our main estimates for such kernels are as follows:

\begin{proposition}\label{kernel_prop}
  a) Assume that $a$ is a smooth bump on the unit scale. Then the
  kernel $K_a$ satisfies
  \begin{equation} \label{wave-decay} |K_a(t,x;s,y)| \lesssim \langle
    t-s \rangle^{-\frac32} \langle |t-s| - |x-y| \rangle^{-N}
  \end{equation}

  b) Let $a = a_{C}$ be a bump function on a rectangular region $C$ of
  size $2^{k} \times (2^{k+l})^3$ with $k \leq l \leq 0$. Then
  \begin{equation}\label{wave-decay-cube}
    |K_a(t,x;s,y)| \lesssim  2^{4k+3l} \langle 2^{2(k+l)} (t-s) \rangle^{-\frac32}
    \langle 2^k (|t-s| - |x-y|) \rangle^{-N}
  \end{equation}
  If in addition $x-y$ and $C$ have a $2^{k+l}$ angular separation
  then
  \begin{equation}\label{off-angle}
    |K_a(t,x;s,y)| \lesssim  2^{4k+3l} \langle 2^{2(k+l)} |t-s| \rangle^{-N}
    \langle 2^k (|t-s| - |x-y|) \rangle^{-N}
  \end{equation}

\end{proposition}

\begin{proof}
  a) Away from a conic neighborhood of the cone $\{ |t-s| =
  \pm|x-y|\}$ the phase
  \[
  \Psi = \pm (t-s)|\xi| + \xi(x-y)  \]
  is nondegenerate.  Hence applying the symbol bounds \eqref{symbol3} 
  repeated integration by parts with respect to $\xi$ yields
  \[
  | K^a(t,x,s,y) | \lesssim \la(t,x)-(s,y)\ra^{-N}, \qquad N \sim
  \sigma^{-1}
  \]
  Near the cone we need to be more careful. Denoting $T = |t-s|+|x-y|$
  and $R= |t-s|-|x-y|$, in suitable (polar) coordinates the operator $K^a$ takes the
  form
  \[
  K^aF(t,x) = \int (O^{-1}(t,x,\xi') O(s,y,\xi')) F(s,y)  (O^{-1}(t,x,\xi') O(s,y,\xi'))^{-1} e^{i R \xi_1} e^{i T \xi'^2} \tilde a(\xi)
  d\xi
  \]
  In $\xi_1$ (the former radial variable) this is a straight Fourier
  transform, so we get rapid decay in $R$.  
Given the bound \eqref{symbol3}, we can use stationary
  phase in $\xi'$.  While the $\xi$ derivatives of the $O^{-1}(t,x,\xi') O(s,y,\xi')$ part
  of the phase are not bounded, they only bring factors of $T^\sigma$,
  which is small enough not to affect the stationary phase ( this
  works up to $\sigma = \frac12$).  We obtain
  \[
  |K^a(t,x,s,y)| \lesssim T^{-\frac32} (1+ R)^{-N}
  \]

  b) Away from the cone the estimate follows easily as above since the
  phase is nondegenerate.  Near the cone we use again polar
  coordinates to express our oscillatory integral as above,
  \[
  K^CF(t,x) = \int (O^{-1}(t,x,\xi') O(s,y,\xi')) F(s,y)  (O^{-1}(t,x,\xi') O(s,y,\xi'))^{-1}e^{i R \xi_1} e^{i T \xi'^2} \tilde a_C(\xi)
  d\xi
  \]
  where $a_C$ is a bump function in a rectangle on the $2^k$ scale in
  the radial variable $\xi_1$ and on the $2^{k+l}$ scale in the
  angular variable $\xi'$.  Then we can separate variables in
  $(\xi_1,\xi')$. We note that this rectangle need not be centered at
  $\xi'=0$, though this is the worst case.  In $\xi_1$ this is again a
  Fourier transform, so we get the factor
  \[
  2^k \langle 2^k R \rangle^{-N}
  \]
  In $ \xi'$ we can use stationary phase to get the factor
  \[
  2^{3(k+l)} \langle 2^{2(k+l)T} \rangle^{-\frac32}
  \]
  The bound \eqref{wave-decay-cube} follows by multiplying these two
  factors.

  Finally, the estimate \eqref{off-angle} corresponds to the case when
  $a_C$ is supported in $|\xi'| > 2^l$ in the above representation.
  If $T < 2^{-2(k+l)}$ then there are no oscillations in $\xi'$ on the
  $2^{k+l}$ scale, and we just use the brute force estimate. For $T >
  2^{-2(k+l)}$ the phase is nonstationary in $\xi'$, and we obtain the
  factor
  \[
  2^{3(k+l)} (1+2^{2(k+l)} T)^{-N}
  \]
\end{proof}

While the above proposition contains all the oscillatory integral
estimates which are needed, it does not apply directly to the
frequency localized operators $Op(Ad(O))_{<0}(t,x,D)$ and
$Op(Ad(O))_{<0}(D,y,s)$.  For that we need to produce similar
estimates for the kernels $K_{a,<0}$ of the operators
\[
T^a_{<0} = Op(Ad(O))_{<0}(t,x,D) a(D) e^{\pm i(t-s)|D|}Op(Ad(O^{-1}))_{<0}(D,s,y) 
\]
The transition to such operators is made in the next
\begin{proposition}\label{kernel_prop_loc}
  a) Assume that $a$ is a smooth bump on the unit scale. Then the
  kernel $K^a_{<0}$ satisfies
  \begin{equation} \label{wave-decay0} |K^a_{<0}(t,x;s,y)| \lesssim
    \langle t-s \rangle^{-\frac32} \langle |t-s| - |x-y| \rangle^{-N}
  \end{equation}
  In addition, the following fixed time bound holds:
  \begin{equation} \label{almost-ortho} |K^a_{<0}(t,x;t,y)- \check
    a(x-y)| \leq \epsilon |\log \epsilon|
  \end{equation}

  b) Let $a = a_{C}$ be a bump function on a rectangular region $C$ of
  size $2^{k} \times (2^{k+l})^3$ with $k \leq l \leq 0$. Then
  \begin{equation}\label{wave-decay-cube0}
    |K^a_{<0} (t,x;s,y)| \lesssim  2^{4k+3l} \langle 2^{2(k+l)} (t-s) \rangle^{-\frac32}
    \langle 2^k (|t-s| - |x-y|) \rangle^{-N}
  \end{equation}

  c) Let $a = a_{C}$ be a bump function on a rectangular region $C$ of
  size $1 \times (2^{l})^3$ with $ l \leq 0$.  Let $\omega \in \S^3$
  be at angle $l$ from $C$. Then we have the characteristic kernel
  bound
  \begin{equation}\label{off-angle0}
    \begin{split}
      |K^a_{<0}(t,x;s,y)| \lesssim 2^{3l} \langle 2^{2l} |t-s|
      \rangle^{-N} & \langle 2^l |x'-y'| \rangle^{-N}
      \\
      t-s = (x-y)\cdot \omega
    \end{split}
  \end{equation}

\end{proposition}
\begin{proof}
  a) We represent the action of symbol $Op(O)_{<0}$ by
  \begin{equation}
   Op(O)_{<0}F(x) = \int m(z)\int e^{i(x-y)\xi} O(x+z,\xi) F(y) 
 O^{-1}(x+z,\xi) dy d\xi
\,dz
  \end{equation}
  where $m(z)$ is an integrable bump function on the unit scale.
 One proceeds similarly for functions on space-time. 

 This can be expressed in a concise form using the operators $T_z,
 T_w$ to represent translation in the space-time directions $z, w$
 acting on the variables $t, x$. and $s, y$, respectively.

 Using this representation for both operators $Op(O)_{<0}$, $Op(O)_{<0}^*$, and
 denoting $a(z,w) (\xi)= a(\xi) e^{i(\pm |\xi|, \xi) \cdot (z-w)}$,
he kernel $K^a_{<0}$ can be  expressed 
  in terms of the kernels $K^a$ in the
  previous proposition, namely
  \begin{equation}\label{ka0}
    K^a_{<0}F(t,x)=  \int T_z T_w  K^{a(z,w)}F(t,x)   \ m(z) m(w) \ dz dw
  \end{equation}

  To prove the bound \eqref{wave-decay0} we use \eqref{wave-decay},
  together with the additional observation that the implicit constant
  in \eqref{wave-decay} depends on finitely many seminorms of $a$ (at
  most 8, to be precise) which we denote by $||| a|||$. Then
  \[
  ||| a(z,w)||| \lesssim (1+|z|+|w|)^N
  \]
  However, this growth is compensated by the rapid decay of $m$,
  therefore the bound \eqref{wave-decay} for $K^a$ transfers directly
  to $K^a_{<0}$ in \eqref{wave-decay0}.

  To prove \eqref{almost-ortho} we use the same representation as
  above to write
  \[
  K^a_{<0}F(t,x) - \check a*F(t, x) = 
  \int (T_z T_w  K^{a(z,w)} - I)F(t,x)   \ m(z) m(w) \ dz dw
  \]
  By \eqref{symbol2} we have
  \[
  | T_z \psi_{\pm}(t,x,\xi) - T_{w} \psi_{\pm}(t,y,\xi))| \lesssim
  \epsilon \log (1+ |z|+|w|+|x-y|)
  \]
  which yields
  \[
  \begin{split}
    |K^a_{<0}(t,x,t,y) - \check a(x-y) | \lesssim &\ \epsilon \int
    \log (1+ |z|+|w|+|x-y|) |m(z)||m(w)| dz dw \\ \lesssim &\ \epsilon
    \log (2+|x-y|)
  \end{split}
  \]
  This suffices if $\log (2 + |x-y|) \lesssim |\log \epsilon|$. But
  for larger $|x-y|$ we can use \eqref{wave-decay0} directly.

  b) Using the representation \eqref{ka0}, the bound
  \eqref{wave-decay-cube0} follows from \eqref{wave-decay-cube}
  exactly by the same argument as in case (a).

  c) Using the representation \eqref{ka0}, the same argument also
  yields the bound \eqref{wave-decay} provided we have the following
  estimate for $K^a$:
  \[
  |K^a(t,x,s,y)| \lesssim 2^{3l} \langle 2^{2l} |t-s| \rangle^{-N}
  \langle 2^{l} |x'-y'| \rangle^{-N} (1+|(t-s)- (x-y) \cdot
  \omega|)^{10 N}
  \]
  To see that this is true, we consider three cases:

  (i) If $|t-s| \lesssim 2^{-2l}$ then \eqref{wave-decay-cube} applies
  directly.

  (ii) If $|t-s| \gg 2^{-2l}$ but $ ||x-y|-|t-s|| \gtrsim 2^{l}
  |x'-y'|+ 2^{2l}|t-s|$ then \eqref{wave-decay-cube} still suffices.

  (iii) If $|t-s| \gg 2^{-2l}$ and $|(t-s)- (x-y) \cdot \omega|)|
  \gtrsim 2^{l} |x'-y'|+ 2^{2l}|t-s| $ then \eqref{wave-decay-cube}
  also applies.

  (iv) Finally, if $|t-s| \gg 2^{-2l}$, but $ ||x-y|-|t-s|| \ll 2^{l}
  |x'-y'|+ 2^{2l}|t-s| $ and $|(t-s)- (x-y) \cdot \omega|)| \ll 2^{l}
  |x'-y'|+ 2^{2l}|t-s| $ then we must have $\angle (x-y,\omega) \ll
  2^{l}$, which implies that $\angle (x-y,C) \approx 2^{l}$.  Then
  \eqref{off-angle} applies.

\end{proof}

\subsubsection{Fixed-time $L^2$ estimates   for the 
 gauge transformations}\label{s:l2-gauge}

Here we use the previous theorem to prove three $L^2$ estimates which
correspond to the $L^2$-part of \eqref{N_mapping}, \eqref{dtS_mapping}
as well as that of \eqref{S_mapping}. These will also be repeatedly used
later in conjunction with the notion of disposability.

\begin{proposition}
  The following fixed time $L^2$ estimates hold for functions
  localized at frequency $1$, with or without the $<0$ symbol localization:
  \begin{align}
    Op(Ad(O))_{<0}(t,x,D):\quad & L^2 \rightarrow L^2,\, \label{l2}\\
    Op(Ad(O))_{<0}(t,x,D)  a(D) Op(Ad(O^{-1}))_{<0}(D,y,s) - a(D) :
    \quad & L^2\rightarrow \epsilon^{\frac{N-4}{N}} \log \epsilon \
    L^2
    \label{l2-ortho}\\
    \partial_{x,t} Op(Ad(O))_{<0}(t,x,D) : \quad & L^2 \to
    \epsilon L^2 \label{dtl2}
  \end{align}

\end{proposition}
\begin{proof}
  a) By the estimate \eqref{wave-decay} with $s=t$, the $TT^*$ type
  operator
  \[
  Op(Ad(O))(t,x,D) P_0^2Op(Ad(O^{-1}))(D,y,t)
  \]
  has an integrable kernel, so it is $L^2$ bounded.  Therefore $ Op(Ad(O))(t,x,D) P_0 $ 
and its adjoint are $L^2$ bounded. To   accommodate symbol localizations we observe that
  \[
Op(Ad(O))_{<k}   = \int m_k(z) Op(Ad(T_z O)) 
  \,dz
  \]
  where $m(z)$ is an integrable bump function on the $2^{-k}$ scale
  and $T_z$ denotes translation in the direction $z$, with $z$
  representing space-time coordinates. Since the wave equation is
  invariant to translations, the symbol $T_z O $ is of
  the same type as $O$ and its left and right
  quantizations are also $L^2$ bounded.  Thus the bound \eqref{l2}
  follows by integration with respect to $z$.

  b) For the estimate \eqref{l2-ortho} we note that the kernel of
  \[
Op(Ad(O))_{<0}(t,x,D) a(D) Op(Ad(O^{-1}))_{<0}(D,y,s) - a(D) 
\]
 is  given by $K^a_{<0}(t,x,t,y) - \check a(x-y)$. Combining
  \eqref{wave-decay} and \eqref{almost-ortho} we get
  \[
  |K^a_{<0}(t,x,t,y) - \check a(x-y)| \lesssim \min \{ \epsilon |\log
  \epsilon|, |x-y|^{-N}\}
  \]
  The integral of the expression on the right is about
  $\epsilon^{\frac{N-4}{N}} |\log \epsilon|$, therefore the conclusion
  follows.

  c) By translation invariance we discard the $<0$ symbol
  localization, and show that $\partial_{x,t} Op(Ad(O))(t,x,D) P_0$ is $L^2$ bounded.  
We have
  \[
\partial_{x} Ad(O) = ad(O_{;x}) Ad(O)
\]
 By \eqref{Ox_decomp1} we have
  $O_{;x} \in \epsilon DL^\infty(L^\infty)$ therefore we can
  dispose of it and use the $L^2$ boundedness of $  Op(Ad(O)) P_0$.
\end{proof}

\subsection{ High space-time frequencies in $O$}

Although $\Psi_{<k}$ is localized at space-time frequencies $<k$, its renormalization 
counterpart $O_{<k}$ does not share the same property  since it is obtained 
in a nonlinear fashion. Nevertheless,  the following result asserts that the
 high frequency part of $O_{<k}$ does satisfy much better bounds: 

\begin{lemma}
  Assume that $1\leqslant q \leqslant p \leqslant \infty$. Then for
  $k+C \leq l \leq 0$ we have :
  \begin{equation}
    \lp{Op(Ad(O_{<k}))_l(t,x;D)}{L^{p}(L^2)\to L^q(L^2)} 
    \lesssim\ 
    \epsilon 2^{(\frac{1}{p}-\frac{1}{q})k}2^{5(k-l)}
    \ , \label{remainder_est}
  \end{equation}
  This holds for both left and right quantizations.
\end{lemma}
\begin{proof}

  For the symbol we iteratively write:
  \begin{align}
    S_{l} Ad(O_{<k})  \ &= \  2^{-l}S_{l} \partial_{x,t} (Ad(O)_{<k}) = 
2^{-l}S_{l} (ad(O_{;(x,t)} Ad(O)_{<k})
    \ \notag\\
    &= \ \ldots \ = \  2^{-5l}\prod_{j=1}^5
    (S^{(j)}_{l} ad(O_{;(x,t)})  \cdot Ad(O)_{<k} \ ,
    \notag
  \end{align}
  where the product denotes a nested (repeated) application of
  multiplication by $S_{l}\partial_t \psi_{<k}$, for a series of
  frequency cutoffs $S^{(j+1)}_{l}S^{(j)}_{l}=S^{(j)}_{l} \approx
  S_{l}$ with expanding widths.  Disposing of these translation
  invariant cutoffs we see that \eqref{remainder_est} follows directly
  from \eqref{Ox_decomp1}.

\end{proof}


\subsection{Modulation localized estimates}

Our next goal is to show that the fixed time $L^2$ bounds
for $Op(O)$ drastically improve to space-times $L^2(L^2)$
bounds if one selects a fixed ``frequency'' in the symbol.
Precisely, for $k < 0$ we can express the difference
\[
Ad(O_{<0}) - Ad(O_{<k}) = \int_k^0  ad(\Psi_h) Ad(O_{<h}) dh
\]
where the integrand $Ad(O)_{;h}:= ad(\Psi_h) Ad(O_{<h})$, while not exactly 
localized at frequency $2^h$, nevertheless is better behaved 
both at higher and at lower frequencies. The next result asserts that 
the output of $Op(Ad(O)_{;h})(t,x,D)$ is better behaved at modulations less than $2^h$: 

 \begin{proposition}\label{mod_loc_prop}
  For $l\leqslant k'\pm O(1)$ one has the fixed frequency estimate:
  \begin{equation}
    \lp{Q_{l} Op(Ad(O)_{;k'})   Q_{<0}P_0}
    {N^*\to X^{0,\frac{1}{2}}_1} \ \lesssim \  2^{\delta(l-k')}\epsilon
    \ . \label{kk'_mod_loc}
  \end{equation}
  In particular summing over all $(l,k')$ with $l\leqslant k$ and
  $k-O(1)\leqslant k'$ for a fixed $k\leqslant 0$ yields:
  \begin{equation}
    \lp{Q_{<k} (Op({Ad}(O_{<0})) - Op(Ad(O_{<k-C}))Q_{<0}P_0}
    {N^*\to X^{0,\frac{1}{2}}_1} \ \lesssim \ \epsilon \ . \label{mod_loc}
  \end{equation}
\end{proposition}

\begin{proof}[Proof of Proposition \ref{mod_loc_prop}]
  We proceed in a series of steps, where we consider successive modulation scenarios.

\medskip 

  \step{1}{High modulation input} First we estimate the contribution
  of the dyadic piece $Q_{k}Op({Ad}(O)_{;k'})Q_{\geqslant k-C}P_0$ to line
  \eqref{kk'_mod_loc}. Using the $X_\infty^{0,\frac{1}{2}}$ bounds for
  the input, it suffices to prove the estimate:
  \begin{equation}
    \lp{Q_k Op({Ad}(O)_{;k'})P_0}{L^2(L^2)\to L^2(L^2)} \ \lesssim \ 
    2^{\frac{1}{5}(k-k')}\epsilon \ . \notag
  \end{equation}
  By Sobolev estimates in $|\tau|\pm |\xi|$, this reduces to the
  bound:
  \begin{equation}
    \lp{Op(Ad(O)_{;k'})P_0}{L^2(L^2)\to L^\frac{10}{7}(L^2)} \ \lesssim \ 
    2^{-\frac{1}{5}k'}\epsilon \ . \notag
  \end{equation}
  Recalling that $Op(Ad(O)_{;k'})$ has symbol $ad(\Psi_{k'}) Ad(O_{<k'})$,
it suffices to use the $L^2$ boundedness for $Op(O_{<k'})$ 
and the $L^5 L^\infty$ disposability bound for $\Psi_{k'}$.
\medskip

  \step{2}{Main decomposition for low modulation input} Now we
  estimate the expression $Q_{k}Op(Ad(O)_{;k'}) Q_{< k-C}P_0u$.
  First expand the untruncated group elements as follows:
  \begin{align}
  Ad(O)_{;k'}  = & \   ad(\Psi_{k'}) Ad(O_{<k-C}) +  \int_{k-C}^{k'}
    ad(\Psi_{k'}) ad(\Psi_l) Ad(O_{<k-C})  dl\\  & \ +
 \int_{k-C'}^{k'}\int_{l'}^{k'} ad(\Psi_{k'})ad(\Psi_l)ad(\Psi_{l'}) Ad(O_{<l'})dl dl'\notag\\
    = & \ \mathcal{L} + \mathcal{Q} + \mathcal{C} 
    . \notag
  \end{align}
  We will estimate the effect of each of these terms separately.

 \medskip

 \step{3}{Estimating the linear term $\mathcal{L}$} The factor $
 ad(\Psi_{k'}) $ in $\mathcal{L} $ is well localized both in frequency
 and modulation. While not exactly localized, the second 
factor $Ad(O_{<k-C})$ is to the leading order localized at frequency 
and modulation $\leq k - C/2$, with more regular and decaying tails 
at larger frequencies and modulations. The geometry of the 
bilinear wave interactions, on the other hand, requires us to 
estimate differently the contribution of $ ad(\Psi_{k'}) $ depending on 
its modulation relative to $2^k$.  To account for both considerations
above, we split the term $\mathcal L$ as follows:
  \begin{equation}
    \mathcal{L} \ =  ad(\Psi_{k'}) S_{<k-4} Ad(O_{<k-C})
+  ad(\psi_{k'}) S_{>k-4} Ad(O_{<k-C})    
\label{Lsplit}
  \end{equation}
\medskip

  \step{3a}{Estimating the principal linear term in $\mathcal{L}$} For
  the first term on RHS of line \eqref{Lsplit} it suffices to show the
  general estimate:
  \begin{multline}
  \hspace{-.1in}  \lp{Q_{k}Op( ad(\Psi_{k'}) b_{<k-4})
      Q_{<k-C}P_0 }{L^\infty(L^2)\to L^2(L^2)}  \lesssim 
    \epsilon\, 2^{-\frac{1}{2}k+\frac{1}{4}(k-k')} \sup_t
    \lp{B_{<k-4}(t)}{L^2\to L^2}  \label{lin_psi_loc}
  \end{multline}
  for $k' \geqslant k$, and for symbols $b(x,\xi)_{<k-4}$
  with sharp frequency and modulation localization and with either the left
  or right quantization.  The geometry of the 
bilinear wave interactions requires us to 
estimate differently the contribution of $ ad(\Psi_{k'}) $ depending on 
its modulation relative to $2^k$. Thus we will consider three cases:
\medskip

\step{3a(i)}{ The contribution of $ Q_{<k} \Psi_{k'}$}
In this case the modulation of the output
  determines the angle $\theta$ between the spatial frequencies of
  $\Psi_{k'}(x,\xi)$ and the spatial frequency of the input, which is
  $\theta\sim 2^{\frac{1}{2}(k-k')}$. Since this is also the angle with
  $\xi$, we may restrict the symbol of $\Psi_{k'}$ to $\psi_{k'}^{(\theta)}$ for
  which the estimate \eqref{lin_psi_loc} follows immediately from
  \eqref{decomp1} and summing over \eqref{psi_decomp2}.
 \medskip

\step{3a(ii)}{ The contribution of $ Q_{k} \Psi_{k'}$}  In this case one of the 
inputs has the same modulation as the output, so we only get a bound 
from above on the angle $\theta$, namely  $\theta \lesssim 2^{\frac{1}{2}(k-k')}$.
However, instead of \eqref{psi_decomp2}, which looses at small angles,
we can take advantage of the fixed modulation to  use \eqref{psi_decomp2-mod}, 
which gains at small angles.

\medskip
\step{3a(iii)}{ The contribution of $ Q_{>k} \Psi_{k'}$} In this case
one of the inputs has high modulation, say $2^{k'+2j'}$ with $(k-k')/2
< j' \leq 0$ .  This determines the angle $\theta$ to be $\theta
\approx k'+j'$. Then we can use again \eqref{psi_decomp2-mod}.

\medskip

  \step{3b}{Estimating the frequency truncation error in
    $\mathcal{L}$} For the second term on RHS of line \eqref{Lsplit}
  we use \eqref{psi_decomp1} for $\psi_{k'}$ with $p=6$ combined with
  \eqref{remainder_est} with $(p_2,q)=(6,3)$.

  \bigskip

  \step{4}{Estimating the quadratic term $\mathcal{Q}$} We follow a
  similar procedure to \textbf{Step 3} above. First split $S_{< k-4}
  Ad(O_{<k-C}) + S_{> k-4} Ad(O_{<k-C}) $. For the second term one can
  proceed as in \textbf{Step 4b} above using \eqref{psi_decomp1},
  \eqref{remainder_est}, and \eqref{decomp_holder}.

  Therefore we only need to consider the effect of the first term, for
  which we will prove the trilinear bound:
  \begin{multline}
    \lp{Q_{k}\cdot Op( ad(\Psi_{k'}ad(\Psi_{l})b_{<k-4})(t,x;D)\cdot
      Q_{<k-C}P_0 }{L^\infty(L^2)\to L^2(L^2)} \\ \lesssim \
    \epsilon^2\,
    2^{-\frac{1}{2}k}2^{\frac{1}{4}(k-k')}2^{\frac{1}{6}(k-l)} \sup_t
    \lp{B_{<k-4}(t)}{L^2\to L^2} \ , \label{quad_psi_loc}
  \end{multline}
  for $k'\geqslant l\geqslant k$.
We decompose the symbol $ad(\Psi_{k'}) ad(\Psi_{l})$ in terms of the angles,
  \begin{equation}
    \sum_{\theta\gtrsim 2^{\frac{1}{2}(k-k')}} \!\!\!
    ad(\Psi_{k'}^{(\theta)}) ad (\Psi_{l})
    + \sum_{\substack{\theta \gtrsim 2^{\frac{1}{2}(k-k')}\\
        \theta' \ll 2^{\frac{1}{2}(k-l)} }} \!\!\!
    ad(\Psi_{k'}^{(\theta)}) ad(\Psi_{l}^{(\theta')}) +  \sum_{\substack{\theta \ll 2^{\frac{1}{2}(k-k')}\\
        \theta \ll 2^{\frac{1}{2}(k-l)} }} \!\!\!
    ad(\Psi_{k'}^{(\theta)}) ad(\Psi_{l}^{(\theta')})
\ = \ T_1 + T_2 + T_3\ . \notag
  \end{equation}
  For the term $T_1$ put the first factor in $DL^3(L^\infty)$ and the
  second in $DL^6(L^\infty)$. This gives us dyadic terms in
  LHS\eqref{quad_psi_loc}$(T_1)\sim
  2^{-\frac{1}{2}k}2^{\frac{1}{4}(k-l)}2^{\frac{1}{6}(k-l')}$.  For
  the term $T_2$ do the opposite, which yields a similar bound.
  Finally, for the term $T_3$ a frequency modulation analysis shows
  that at least one of the two factors has modulation $\geq k$. Then
  we use \eqref{psi_decomp2-mod} to place that factor in $DL^2(L^\infty)$
and simply bound the remaining factor in $DL^\infty L^\infty$.

 \bigskip

  \step{5}{Estimating the cubic term $\mathcal{C}$} In this case we
  can gain $2^{\frac{1}{6}(k-k')}$ directly through the use of
  \eqref{psi_decomp1} and three $DL^6(L^\infty)$. Further
  details are left to the reader.

\end{proof}

\subsection{ The $N_0$ and $N_0^*$ bounds in 
\eqref{N_mapping},  \eqref{dtS_mapping} and
 \eqref{S_mapping}. }

We are now ready to conclude the proof of the first part of Theorem~\ref{t:para}.

\begin{proof}[Proof of \eqref{N_mapping} for $Z = N_0, N_0^*$]
  By duality it suffices to prove the $N_0^*$ bound for both the left
  and the right calculus.  The $L^\infty L^2$ bound follows from the
  fixed time $L^2$ bound.  The $X^{0,1}_{\infty}$ bound is also
  straightforward when we go from high to low modulation. 
It remains to consider the case of low modulation input and high 
modulation output.  Precisely, we need to show that 
\begin{equation}\label{lo-hi}
\| Q_k Op(Ad(O))Q_{<k-C}P_0\|_{L^\infty L^2 \to L^2} \lesssim \epsilon 2^{-\frac{k}2}
\end{equation}
From here on, we specialize to the left calculus.
By  \eqref{mod_loc}, it remains to estimate
\[
\| Q_{k} Op(Ad(O_{<k-C})) Q_{<k-C} P_0\|_{L^\infty L^2 \to L^2 L^2} \lesssim
\epsilon  2^{-\frac{k}2}   
\]
Here we can harmlessly replace $Ad(O_{<k-C})$ by $S_{>k-4} Ad(O_{<k-C})$.
But then we can conclude using \eqref{remainder_est}.

To prove \eqref{lo-hi} for the right calculus, we use duality to switch to the 
left calculus bound
\begin{equation}\label{lo-hi-left}
\| Q_{<k-C} Op(Ad(O))Q_{<k}P_0\|_{ L^2 \to L^1 L^2} \lesssim \epsilon 2^{-\frac{k}2}
\end{equation}
Then we can conclude the proof in the same manner as before.
\end{proof}

\begin{proof}[Proof of \eqref{dtS_mapping} for $Z = N_0, N_0^*$]
Here we repeat the above analysis with $Ad(O)$ replaced by 
$\partial_t(Ad(O)) = ad(O_{:t}) Ad(O)$. We remark that
\[
\partial_h\partial_t(Ad(O))) = ad(\partial_t(\Psi_h)) Ad(O) + ad(\Psi_h) ad(O_{:t}) Ad(O)
\]
and all terms above are of the same form as above, possibly 
with $ Ad(O))$ harmlessly replaced by $ad(O_{:t}) Ad(O)$. 
\end{proof}

\begin{proof}[Proof of \eqref{S_mapping} for $Z = N_0, N_0^*$]
By duality it suffices to consider the case $Z =N_0^*$.
In view of the $L^2$ bound proved earlier,
it suffices to show that 
\[
\| Q_k Op(Ad((O)) Op(Ad(O))^* Q_{<k-C}\|_{L^\infty L^2 \to L^2} \lesssim \epsilon 2^{-\frac{k}2}
 \]
But this is a consequence of two bounds,
\[
\| Q_k Op(Ad(O))Q_{<k-C}\|_{L^\infty L^2 \to L^2} \lesssim \epsilon 2^{-\frac{k}2}
 \]
and 
\[
\| Q_{> k-C/2}  Op(Ad(O))^* Q_{<k-C}\|_{L^\infty L^2  \to L^2} \lesssim \epsilon 2^{-\frac{k}2}
 \]
both of which follow from \eqref{lo-hi}.
\end{proof}

\subsection{Strichartz and null frame norm estimates} 
\label{s:ssharp}

Here we briefly outline how to prove the bound
\eqref{parametrix_bound}. In fact, the argument for this bound follows
exactly like the proof of (83) in section 11 of \cite{MKG}. One
replaces (114) in \cite{MKG} by the $L^2$-boundedness of the operators
$Op(Ad(O_{\pm})_{<k})(t, x, D)$, the dispersive bounds (108), (110)
in \cite{MKG} by the bounds \eqref{wave-decay0},
\eqref{wave-decay-cube0}, and the bound (118) in \cite{MKG} by
\eqref{mod_loc}.


\section{ Error estimates}
\label{s:error}
Here we again simplify notation by writing $O_{<0} = O$. 
The goal of this section is to consider the conjugation error
\[
E = \Box^p_{A_{<0}}Op(Ad(O))  - Op(Ad(O)) \Box 
\]
and prove the bound \eqref{error_ests} in Theorem~\ref{t:para}.

Commuting $\Box $ we have
\[
\begin{split}
E = &  2 Op(ad(A_{<0,\alpha}) Ad(O)) \partial^\alpha + 2  Op( ad(A_{<0,\alpha}) ad(O^{;\alpha})
Ad(O)) + 2 Op(ad(O_{;\alpha}) Ad(O))\partial^\alpha  
\\ & + Op(ad(\partial^\alpha O_{;\alpha}) Ad(O))
+ Op(ad(O_{;\alpha}) ad(O^{;\alpha}) Ad(O))
\\
 = & \ 2 Op(ad(A_{<0,\alpha} + \Psi_{<0,\alpha}) Ad(O)) \partial^\alpha 
\\ & + 2 Op(ad(O_{;\alpha} - \Psi_\alpha) Ad(O))\partial^\alpha  
\\ & \   + 2  Op( ad(A_{<0,\alpha}) ad(O^{;\alpha}) Ad(O)) 
+ Op(ad(O_{;\alpha}) ad(O^{;\alpha}) Ad(O))
\\ & \ + Op(ad(\partial^\alpha O_{;\alpha}) Ad(O))
\\ = & \ E_1 + E_2 + E_3 + E_4
\end{split}
\]
Here the main difficulty is to estimate the term $E_1$, which not only contains
the input of $A_{j,<0}^{pert,\pm}$ but also the full input from $A_0$.
We carry out a good portion of the analysis in Section~\ref{s:pert+}, modulo 
a single interaction scenario which is more extensive
and requires more than the $S$ norm of $A$ ; this is relegated to the
last section~\ref{s:tri}.  The remaining terms $E_2$, $E_3$ and $E_4$ 
are dealt with in Section~\ref{s:error-good}. These are more in line with previous estimates,
and only require the $S^1$ norm of $A_x$.

\subsection{The estimate for $E_1$}
\label{s:pert+}
We recall that 
\[
\Psi_{k,+}(t,x,\xi) \ = \
   L^\omega_- \Delta^{-1}_{\omega^\perp} (A_{j,k}^{main} \cdot \omega_j), 
\qquad
A_{j,k}^{main} = \Pi^{\omega}_{>\delta k}  \Pi^\omega_{cone} 
A_{j,k} 
\]
where 
\[
L^\omega_- = \partial_t- \omega \nabla_x
\]
Replacing the operator $D_t$ by $- |D_x|$  we produce a first error, namely
\[
Op ( ad(A_0 + \partial_0 \Psi) Ad (O)) (D_t +|D_x|) 
\]
which is easily dealt with using $DL^2 L^\infty$ disposability bounds 
for $A_0$ and $\partial_t \Psi$. We are left with 
\[
E_{1} = Op (ad( A_j \cdot \omega+A_0 + L^\omega_{+} \Psi^+) Ad(O))   
\]
Now we use 
\[
L^\omega_{+} L^\omega_- \Delta^{-1}_{\omega^\perp} = \Box \Delta^{-1}_{\omega^\perp}
- 1
\]
to write
\[
G:= A_j \cdot \omega+A_0 + L^\omega_{+} \Psi^+ = G_{cone} + G_{null} + G_{out}
\]
where
\[
\begin{split}
G_{cone} = & \ \Box  \Delta^{-1}_{\omega^\perp} \Pi^{\omega}_{>\delta k} A_{j,cone} \omega_j
+ \Pi^{\omega}_{< \delta k} A_{j,cone} \omega_j
+ A_{0,cone} 
\\
G_{null} = & \ A_{j,null} \omega_j + A_{0,null}
\\
G_{out}  = & \ A_{j,out} \omega_j + A_{0,out}
\end{split}
\]
We seek to prove that $Op(ad(G)Ad(O)): N^* \to N$. We do this in two stages.
First we will show that we can dispense with $O$, and simply prove that
\begin{equation}\label{no-O}
Op(ad(G)) : S_0 \to N
\end{equation}
Since $Op(Ad(O))$ is bounded from $S_0^\sharp$ into $S^0$, in order to
achieve this it suffices to show that
\begin{equation}\label{kill-O}
Op(ad(G)Ad(O)) - Op(ad(G)) Op(Ad(O)): N^* \to N
\end{equation}
This latter bound will not follow immediately from pdo calculus, since
$G$ is not smooth with respect to $\xi$ on the unit scale. Instead,
our strategy will be to first peel off a contribution which is bad from the perspective of 
pdo calculus but has a good decomposable structure. For this we consider 
the pieces $G_h^{(\theta)}$ of $G$, which are localized at frequency $2^h$ and 
angle $\theta$ with respect to $\omega$. In view of the  bounds \eqref{A_decomp2} and \eqref{psi_decomp2}
they satisfy
\[
\| G_h^{(\theta)}\|_{DL^2 L^\infty} \lesssim \theta^\frac32 2^{\frac{h}2}
\]
These symbols are smooth in $\xi$ on the $\theta$ scale, so it is natural to match them
against symbols which are smooth in $x$ on the $\theta^{-1}$ scale. Thus, let $h_\theta$
be defined by $2^{h_\theta} = \theta$. Then we decompose the above difference as
\[
\begin{split}
D_h^{(\theta)} = & \  Op(ad(G_h^{(\theta)})Ad(O)) - Op(ad(G_h^{(\theta)})) Op(Ad(O))
\\
= & \ \int_{h_\theta}^0 Op(ad(G_h^{(\theta)}) ad (\Psi_k) Ad(O_{<k})) dk
\\
& \ - \int_{h_\theta}^0 Op(ad(G_h^{(\theta)}) Op( ad(\Psi_k) Ad(O_{<k})) dk
\\
+ & \ Op(ad(G_h^{(\theta)})Ad(O_{<h_\theta})) - Op(ad(G_h^{(\theta)})) 
Op(Ad(O_{<h_\theta}))
\end{split}
\]
For the first term, decomposable estimates show
\[
\| Op(ad(G_h^{(\theta)}) ad (\Psi_k) Ad(O_{<k})) \|_{L^\infty L^2 \to L^1 L^2}
\lesssim \| G_h^{(\theta)}\|_{DL^2 L^\infty} \| \Psi_k\|_{DL^2 L^\infty} \lesssim  \theta^\frac32 2^{\frac{h}2} 2^{(-\frac12-\delta)k}
\]
which is favorable in view of the range $ \theta < 2^k < 1$.
A similar argument applies for the second term. For the third term, instead, we can use the
pdo calculus. For $|\alpha| \geq 1$ we have 
\[
\|\partial_\xi^\alpha G_h^{(\theta)}\|_{DL^2 L^\infty}  \leq c_\alpha  \theta^{-|\alpha|}
\theta^\frac32 2^{\frac{h}2}
\]
while (using Lemma~\ref{lem:Obounds1})
\[
\| \partial_x^\alpha Ad(O_{<h_\theta}) \|_{L^\infty L^2 \to L^2 L^2}  \lesssim \theta^{|\alpha|-\frac12-\delta} 
\]
It follows that
\[
\| Op(ad(G_h^{(\theta)})Ad(O_{<h_\theta})) - Op(ad(G_h^{(\theta)})) 
Op(Ad(O_{<h_\theta}))\|_{L^\infty L^2 \to L^1 L^2} \lesssim 
\theta^\frac12 2^{\frac{h}2} \theta^{\frac12-\delta}
\]
which again suffices. Thus the bound \eqref{kill-O} is proved. We now
return to \eqref{no-O}.

Corresponding to the partition of $G$  into three parts we will also partition 
\[
E_1 = E_{1,cone} + E_{1, null} + E_{1,out}
\]
In this section we will estimate  $ E_{1,cone}$ and $E_{1,out}$. We will postpone the bound
for $E_{1,null}$ for the next section.

\bigskip

{\bf The bound for $E_{1,cone}$.} The redeeming feature of $E_{1,cone}$
is that the modulation localization and the angle are mismatched and
that forces a large modulation on either the input or the
output. Precisely, consider the $G_{cone}$ component
$G_{cone,k}^{(\theta)}$ at frequency $k$ and angle $\theta$.
Then $G_{cone,k}^{(\theta)}$ has modulation at most $2^k \theta^2$, whereas 
either the input or the output must have modulation at least $2^k \theta^2$.
Hence we can use the $L^2$ norm for either the input or the output,
therefore it suffices  to have $L^2 L^\infty$ disposability for the terms in 
$G_{cone}$. Precisely, we obtain 
\[
\| E_{1,cone}\|_{N^* \to N} \lesssim \sum_{k < 0} \sum_{\theta<1}
\theta^{-1} 2^{-k/2} \| G_{cone,k}^{(\theta)}\|_{DL^2L^\infty}
\]
The nontrivial business is to insure summation.
In the second term in $G_{cone}$ we gain from the angle, and thus also in $k$. 
In the first term we use disposability derived from the $L^2$ bound for $\Box A_k$
therefore we gain in angle, and $\ell^1$ summation in $k$. Same for the third term.

\bigskip 
{\bf The bound for $E_{1,out}$.}. Again the modulation localization
and the angle are mismatched and that forces a large modulation on
either the input or the output. Precisely, consider the $G_{cone}$
component $Q_{k+2j} G_{out,k}^{(\theta)}$ at frequency $k$ and angle $\theta$.
Then $G_{cone,k}^{(\theta)}$ has modulation  $2^{k+2j} \geq 2^k \theta^2$,
whereas either the input or the output must have modulation at least
comparable.  Hence we can again use the $L^2$ norm for either the input
or the output, therefore it suffices to have $L^2 L^\infty$
disposability for the terms in $G_{cone}$. We obtain
\[
\begin{split}
\| E_{1,cone}\|_{N^* \to N} \lesssim & \sum_{k < 0}  \sum_{j < 0} \sum_{\theta < 2^j}
 2^{-(k+2j)/2} \| Q_{k+2j} G_{out,k}^{(\theta)}\|_{DL^2L^\infty}
\\
\lesssim & 
\sum_{k < 0}  \sum_{j < 0} \sum_{\theta < 2^j} 2^{-\frac{(k+2j)}{2}} \theta \theta^{\frac32} 2^{2k}\big\|P_kQ_{k+2j}A_x\big\|_{L^2L^2}
+ 2^{-(k+2j)/2} \theta^{\frac32} 2^{2k}\big\|P_kA_0\big\|_{L^2L^2}
\end{split}
\]
The first term comes from $A_j$ and the second from $A_0$. The latter has $\ell^1$ 
dyadic summation, while for the former we use Proposition~\ref{prop:refinedbox}. 

\bigskip 

{\bf The bound for $E_{1,null}$.}
We can dispense with the case when either the input or the output have high 
modulation ($\gtrsim  2^k \theta^2$, where  $k$, $\theta$ stand for the   
frequency, respectively the angle of $A$) as in the case of $E_{1,cone}$.
We are then left with the expression
\[
\H^* Op(ad (A_{\alpha,<0})) \partial^\alpha C
\]
The bound for this expression is stated in the following lemma, whose proof
is relegated to the next section:

\begin{lemma}\label{l:ttri}
  Suppose that $A$ has $S^1$ norm at most $\epsilon$ and solves the YM-CG
  equation in a time interval $I$.  Extend $A_x$ to a free wave
  outside $I$, and $A_0$ by $0$.  Then for $C$ at frequency $1$ we have the estimate
\begin{equation}
\|\H^* Op(ad (A_{\alpha,<0})) \partial^\alpha C\|_{N} \lesssim \epsilon    \|C\|_{S}
\end{equation}
\end{lemma}

\bigskip

\subsection{The estimates for $E_2$, $E_3$ and $E_4$}
\label{s:error-good}
For these terms we can directly use the decomposability bounds bounds
on $\Psi$ and $O$ in the previous sections.  We consider them
successively.

\subsubsection{The $E_2$ term.}
For the second term in the error we recall that 
\[
\partial_h O_{;\alpha} = \Psi_{h,\alpha} + [\Psi_{h},  O_{;\alpha}]
\]
Thus, repeatedly expanding the symbol $ad(O_{;\alpha} - \Psi_{\alpha})
Ad(O)$ (by means of \eqref{eq:intident}) with respect to $h$, we are left with an integral with respect
to decreasing $h$'s of expressions of the form
\[
ad(\Psi_{h_1}) ad(\partial_\alpha\Psi_{h_2}) Ad(O_{<h_2}), \quad 
\cdots \quad ad(\Psi_{h_1}) \cdots ad(\Psi_{h_5}) ad(\partial_\alpha\Psi_{h_6}) Ad(O_{<h_6})
\]
plus a final remainder term
\[
ad(\Psi_{h_1}) \cdots ad(\Psi_{h_5}) ad(O_{<h_6;\alpha})
 Ad(O_{<h_6})
\]
with possibly changed order of factors.

 For the sixth-linear terms we use $DL^6L^\infty$ bounds for all factors
(in particular we need this for $O_{<h_6;\alpha}$ with no loss; $DL^\infty L^\infty$
would also do by reiterating once more).

For the lower order expressions we are in the same situation as in the MKG
case, with the critical difference that the $\Psi$'s may now have nonzero modulations.
We discuss the second order term, as all higher order terms are similar.
\[
ad(\Psi_{h_1}) ad(\partial_\alpha\Psi_{h_2}) Ad(O_{<h_2}) \partial^\alpha
\]
Replacing $\partial^0$ by $- |\xi|$ (with a better error)
this becomes
\[
ad(\Psi_{h_1}) ad(L^+ \Psi_{h_2}) Ad(O_{<h_2}) |\xi|
\]
and doing the symbol computation, this has the form
\[
D_2 = ad(\Psi_{h_1}) ad(A_{j,h_2}^{main} \omega_j) Ad(O_{<h_2}) |\xi|
\]
Now we do an angle/modulation analysis. We begin with angles, and denote 
by $\theta_1$, $\theta_2$ the two angles. Then by \eqref{psi_decomp2}
and \eqref{A_decomp2} we can first estimate 
\[
\|D_2\|_{L^\infty L^2 \to L^1 L^2} \lesssim \|  \Psi_{h_1}^{(\theta^1)} \|_{DL^2 L^\infty}
\|  A_{j,h_2}^{main,(\theta^2)} \cdot \omega \|_{DL^2 L^\infty} \lesssim 2^{(h_2-h_1)/2} \theta_1^{-\frac12}
\theta_{2}^\frac32
\]
This is favorable if $2^{h_2} \theta_2^2 \gtrsim 2^{h_1} \theta_1^2$. If this is not the 
case, then either one of the factors or the input or the output must have modulation
at least as large as $ 2^{h_1} \theta_1^2$. This cannot be the case for 
$A_{j,h_2}^{main,(\theta^1)}  $ by definition, so we have three scenarios to consider:

a) High modulation input.
Then by \eqref{psi_decomp2}
and \eqref{A_decomp2} we have
\[
\|D_2 B\|_{L^1 L^2} \lesssim 2^{-h_2/2} \theta_2^{-1} 
\|  \Psi_{h_1}^{(\theta^1)} \|_{DL^6 L^\infty} \|  A_{j,h_2}^{main,(\theta^2)} \cdot \omega  \|_{DL^3 L^\infty}
\| B\|_{N^*} \lesssim 2^{(h_2-h_1)/6} \theta_2^{2-\frac16}
\]
which suffices.

b) High modulation output where we have exactly the same bound.

c) High modulation on $\Psi_1$. Then we can use \eqref{psi_decomp2-mod}
for its $DL^2L^\infty$ bound.

 \subsubsection{The term $E_3$}

In this term we have high frequencies to spare.
 For $[ O_{;\alpha}, [O_;^\alpha, Op(O)\cdot ] ]$ we need some mild 
$L^2 L^\infty$ disposability estimate for $O_{;\alpha}$.
Similarly, for $A_\alpha$ we can use an $L^2 L^\infty$ bound.

\subsubsection{The term $E_4$}

For $ad(\partial^\alpha  O_{;\alpha}) Ad(O)$ we expand in $h$ 
\[
ad(\partial^\alpha  O_{;\alpha}) Ad(O) =  \int_{-\infty}^0 (ad(\Box \Psi_{h}) 
+  \partial^\alpha [ \Psi_{h},O_{<h;\alpha}]  + 
ad(\partial^\alpha  O_{<h;\alpha}) ad(\Psi_h)) Ad(O_{<h}) dh
\]
For the second and third term we use $L^2 L^\infty$ disposability for
$\Psi_h$ and $O_{<h;\alpha}$, with room to spare.  For the first term,
Consider the component $ad(\Box \Psi_{h}^{(\theta)})$ at angle $\theta$
and reexpand with $2^{h_\theta} = \theta^2 2^h$:
\[
ad(\Box \Psi_{h}^{(\theta)})  Ad(O_{<h}) =  
ad(\Box \Psi_{h}^{(\theta)})  Ad(O_{<h_\theta-C})+
\int_{h_{\theta} -C}^h
ad(\Box \Psi_{h}^{(\theta)})  ad(\Psi_{h_1}) Ad(O_{<h_1}) dh_1
\]
For the integrand we can use two $DL^2L^\infty$ bounds to estimate
\[
\|\Box \Psi_{h}^{(\theta)}\|_{DL^2 L^\infty} \| \Psi_{h_1}\|_{DL^2 L^\infty} \lesssim
\theta^\frac32 2^{\frac{3h}2} 2^{-(\frac12+\delta) h_1} 
\]
which is favorable due to the range of $h_1$. For the leading term, using \eqref{remainder_est}, we replace $Ad(O_{<h_\theta-C})$ by $S_{< h_\theta-4}
Ad(O_{<h_\theta-C}) $.  At this stage we are left with the operator
\[
Op( ad(\Box \Psi_{h}^{(\theta)}) S_{< h_\theta-4}
Ad(O_{<h_\theta-C}) ) 
\]
Given the frequency localization of $\Psi_{h}^{(\theta)}$, the space-time frequency 
interaction analysis shows that either the input or the output must have modulations 
at least $2^h \theta^2$. Then we can conclude using the $DL^2L^\infty$ disposability 
of $\Box \Psi_{h}^{(\theta)}$ in \eqref{boxpsi-disp}.

\end{proof}

\section{Trilinear forms and the second null structure}
\label{s:tri}

Here we prove Lemma~\ref{l:ttri} and Lemma~\ref{l:tri}, which we restate for convenience:

\begin{lemma}\label{l:ttri-re}
a)  Suppose that $A$ has $S^1$ norm at most $\epsilon$ and solves the YM-CG
  equation in a time interval $I$.  Extend $A_x$ to a free wave
  outside $I$, and $A_0$ by $0$.  Then for $C_k$ at frequency $2^k$ we have the estimate
\begin{equation}
\|\H^* [A_{\alpha,<k},\partial^\alpha C_k]\|_{N} \lesssim \epsilon    \|C_k\|_{S}
\end{equation}

b)  Suppose in addition that $B \in S^s$ solves the linearized equation
  \eqref{ym-cg-lin} in a time interval $I$. Extend $B_j$ outside $I$ as free waves,
and $B_0$ by zero. Then for $s < 1$, close to $1$ we have the global estimate
\begin{equation}
\|\H^* [B_{\alpha,<k}, \partial^\alpha C_k]\|_{N^{s-1}} \lesssim \epsilon \|B\|_{S^s}    \|C_k\|_{S^1}
\end{equation}
\end{lemma}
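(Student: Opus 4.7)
Both parts share the same structure; I will describe the argument for part (a), and at the end indicate the adjustments for part (b). The starting point is to split $[A_{\alpha,<k}, \partial^\alpha C_k]$ into its temporal piece $[A_{0,<k}, \partial_0 C_k]$ and its spatial piece $[A_{j,<k}, \partial^j C_k]$. The Coulomb condition $\partial_j A_j = 0$, applied via $A_j = \Delta^{-1}\partial_\ell(\partial_\ell A_j - \partial_j A_\ell)$, rewrites the spatial piece as a null form $\mathcal{N}(|D_x|^{-1}A, C_k)$. The temporal piece has no such direct null structure, but $A_0$ is determined by the elliptic equation \eqref{a0}, which is effectively a null-type expression of $A_j$'s.

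The next step is to split $A_x$ and $A_0$ into a ``good'' part enjoying an $\ell^1 Z$-type bound with off-diagonal decay, and a ``bad'' part to which the equation is reiterated. Using \eqref{l12-to-z} and \eqref{hl-to-z}--\eqref{hh-to-z}, I write $A_j = A_j^{good} + A_j^{bad}$ with $\|A_j^{good}\|_{\ell^1 Z} \lesssim \epsilon^2$ and $A_j^{bad} = \Box^{-1} \sum_{\tilde k < k_1 = k_2} \H_{\tilde k}\mathcal{N}(A_{k_1}, A_{k_2})$. For $A_0$, the elliptic equation and \eqref{l1-a0}, \eqref{l1-a0+} yield the analogous decomposition. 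The contribution of the good parts to $\H^*[A_{\alpha,<k}, \partial^\alpha C_k]$ is then handled by \eqref{zsn} (for $A_j$) and \eqref{zsn-ell} (for $A_0$), closing with off-diagonal summation in the frequency gap $k - k_1$.

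What remains is the trilinear expression
\[
\H^* \bigl[\Box^{-1} \H_{\tilde k} \mathcal N(A_{k_1}, A_{k_2}), \partial^\alpha C_k\bigr]
\]
(together with its $A_0^{bad}$ counterpart). This is the main obstacle. Here both selectors $\H_{\tilde k}$ and $\H^*$ constrain the modulation--angle geometry of the three inputs $A_{k_1}, A_{k_2}, C_k$: $\H_{\tilde k}$ forces the inner null form to sit at a specific modulation relative to $\tilde k$, and $\H^*$ fixes the angle between $A^{bad}$ and $C_k$ in terms of the outer modulation. The key cancellation is the \emph{second null structure}: the inner null form $\mathcal N(A_{k_1}, A_{k_2})$ vanishes precisely in the resonant direction fixed by $\H^*$, because both the outer bilinear and the inner curvature share a common null structure inherited from $F_{\alpha\beta}$. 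Quantitatively, this lets one place the inner null form in a decomposable or $L^2$-type space adapted to the angular sector fixed by $\H^*$, and pair it with the Strichartz/null-frame bounds for $C_k$ contained in the $S$ structure. The triple dyadic sum over $k_1, k_2, \tilde k$ closes because each layer of reiteration extracts one factor of $\|A\|_{S^1} \leq \epsilon$, and the combination of the two null conditions provides off-diagonal decay in each dyadic parameter.

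For part (b), the same scheme applies with the linearized equation \eqref{ym-cg-lin} played in place of YM-CG: the bad parts of $B_\alpha$ are expressed as bilinear combinations of $A$ and $B$, with the elliptic bound \eqref{b0-hs} replacing \eqref{l1-a0} for $B_0$. The trilinear analysis is structurally identical and produces the bound $\epsilon \|B\|_{S^s} \|C_k\|_{S^1}$; the small regularity loss $s < 1$ is absorbed by the off-diagonal decay and never reaches the resonant threshold, which is why the range of admissible $s$ is dictated by the decay constant $\delta$ of \eqref{null-main}.
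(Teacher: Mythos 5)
Your overall strategy is close to the paper's: use the $Z$-type bounds (via \eqref{zsn}, \eqref{zsn-ell}) for the well-controlled ``good'' parts of $A_x$ and $A_0$, obtain these good parts by reiterating the YM-CG equations, and reduce to a single bad trilinear configuration coming from the high-high, low-modulation inner interaction. Up to that point your proposal and the paper agree in substance. However, there is a genuine gap in your treatment of the remaining trilinear piece, and the mechanism you invoke for the cancellation is not the one at work.

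You claim that the inner null form $\mathcal N(A_{k_1},A_{k_2})$ ``vanishes precisely in the resonant direction fixed by $\H^*$.'' That is not true: $\mathcal N(A_{k_1},A_{k_2})$ vanishes when the two high-frequency factors $A_{k_1}, A_{k_2}$ are parallel, which is an angle between those two inputs and has nothing to do with the angle between the low-frequency output $A^{bad}_{\tilde k}$ and $C_k$ that $\H^*$ constrains. More fundamentally, you treat the spatial piece $[A_{j,<k},\partial^j C_k]$ and the temporal piece $[A_{0,<k},\partial_0 C_k]$ in parallel all the way through, producing two separate bad trilinear expressions (one with $\Box^{-1}$, one with $\Delta^{-1}$). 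The second null structure that closes the argument only appears after you \emph{combine} these two contributions. In the paper, the combined expression
\[
L = \Box^{-1}\P[A_j,\partial_\ell A_j]\,\partial_\ell F + \Delta^{-1}[A_j,\partial_0 A_j]\,\partial_0 F
\]
is rewritten, after index gymnastics using $\P = I - \Delta^{-1}\nabla\otimes\nabla$ and $\partial_0^2 = \Box + \Delta$, as a sum of covariant-looking trilinear forms such as $\Box^{-1}[A_j,\partial_\alpha A_j]\,\partial^\alpha F$ and $\frac{\partial_\alpha\partial_i}{\Box\Delta}[A_j,\partial_i A_j]\,\partial^\alpha F$, where the \emph{inner} and \emph{outer} $\partial_\alpha$ indices are contracted against each other. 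It is this covariant pairing across the two bilinear layers --- not any angular vanishing of the inner null form --- that constitutes the second null structure and makes the MKG trilinear estimates (Theorem 12.1 (136)--(138) of \cite{MKG}) applicable. Without this algebraic recombination of the $A_0$ and $A_x$ contributions your reduction stops one step short of the estimate you can actually prove, so the final step of your argument as written does not go through.
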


The proofs for the two parts are quite similar, and hinge on a double null structure
in the main trilinear expression arising when one replaces the first factor in the 
expressions above with the solutions of the corresponding $\Box$ equation for $A_x$ 
and $B_x$, respectively the $\Delta$ equation for $A_0$ and $B_0$. 

\begin{proof}[Proof of Lemma~\ref{l:ttri-re}]
a)  To better frame the question, denote by $2^h$, $\theta$ the
  frequency, respectively the angle of $A$.  Then the $\H^*$ operator 
selects the cases where both the input and the output are at modulation 
less than $2^{h} \theta^2$. 

Our first tool here is to use the $Z$ norm bounds \eqref{zsn}, \eqref{zsn-ell}.
To bound (most of) $A_x$ and $A_0$ we use their  equations \eqref{ym-cg}, respectively \eqref{a0}.
We claim that the following hold:
\begin{equation}\label{z-good}
\begin{split}
\| \Box A_j - \H \P [A_i,\chi_I \partial_j A_i]\|_{\ell^1 \Box Z} \lesssim & \ \epsilon^2  
\\
\| \Delta A_0 - \H[A_i,\chi_I \partial_0 A_i]\|_{\ell^1 \Box^\frac12 \Delta^\frac12 Z} \lesssim & \ \epsilon^2  
\end{split}
\end{equation}
For this we consider all other terms in the equations for $A_j$ and $A_0$, which we 
recall here:
\[
\Box A_j  = \P\left(  [A^\aa,  \p_j A_\aa] - 2 [ A^\alpha, \partial_\alpha A_j] 
- [ \partial_0 A_0, A_j]  - [A^\alpha,[A_\alpha,A_j]] \right)  
\]
\[
\Delta A_0 = [A_j,\partial_0 A_j] - 2 [ A_j, \partial_j A_0] - [A_j,[A_j,A_0]]
\]
 Here we seemingly pay a price for working in an interval $I$, as both right
hand sides need to be multiplied by the characteristic function $\chi_I$ of $I$.
However, this turns out to be harmless, because we can always place $\chi_I$ 
on the differentiated factor, and still retain the use of the $S$ norm.

{\em (i) Cubic terms $A^3$.} These are placed in $\ell^2 L^1 L^2$
which suffices by \eqref{l12-to-z}. (we do need to gain $\ell^1$ summability
in $k$).

{\em (ii) $[A_j, \partial_j A_0]$ and $[\partial_0 A_0, A_j]$.} are also in $\ell^1 L^1L^2$
by using $L^2 \dot H^\frac12$ for $\nabla A_0$ and $L^2 L^6$ for $A_j$.

{\em (iii) The term $[A_0, \partial_0 A_j]$.} The low-high case is the worst, 
but even then we can use Strichartz to produce $L^1 L^\infty$.

{\em (iv) High-low interactions in the quadratic terms $A_j \nabla A_k$.}
This is where we use \eqref{hl-to-z}.

{\em (v) High-high interactions in $[A_j,\partial_j A_k]$.}
 Here we can take the derivative out and estimate as in the high-low case
via \eqref{hl-to-z}..
 
{\em (vi) High-high interactions in $[A_j,\partial_\alpha A_j].$ with at least one 
high modulation} Here by estimating one factor in $L^2$ we can gain 
in terms of high frequencies, see \eqref{hh-to-z}.

This concludes the proof of \eqref{z-good}. In view of \eqref{zsn}, \eqref{zsn-ell},
this leaves us with one remaining case:

\bigskip

{\em (Final case) High-high interactions in $[A_j,\partial_\alpha A_j]$ with two low modulations.}
Here we need to combine the $\Box^{-1} A_j$ and $\Delta^{-1} A_0$ contributions
in order to gain an additional cancellation.
Omitting the frequency and modulation localizations, the expression  is as follows: 
\[
\begin{split}
L = & (\Box^{-1} \P [A_j, \partial_k A_j] \partial_k F + \Delta^{-1} [A_j, \partial_0 A_j])  |D| F
\\
= & \ \Box^{-1} [A_j, \partial_k A_j] \p_k F - \frac{\partial_k\partial_i} {\Box \Delta} 
[A_j, \partial_i A_j] \partial_k F - \Delta^{-1} [A_j, \partial_0 A_j]  \partial_0 F
\\
= & \  \ \Box^{-1} [A_j, \partial_\alpha A_j] \partial^\alpha F  -  
\frac{\partial_k\partial_i} {\Box \Delta} [A_j, \partial_i A_j] \partial_k F 
+ \frac{\partial_0^2}{\Box \Delta}  [A_j, \partial_0 A_j]  \partial_0 F
\\
= & \  \ \Box^{-1} [A_j, \partial_\alpha A_j] \partial^\alpha F  -  
\frac{\partial_\alpha\partial_i} {\Box \Delta} [A_j, \partial_i A_j] \partial^\alpha F
-  \frac{\partial_0\partial_i} {\Box \Delta} [A_j, \partial_i A_j] \partial_0 F 
+ \frac{\partial_0^2}{\Box \Delta}  [A_j, \partial_0 A_j]  \partial_0 F
\\
= & \  \ \Box^{-1} [A_j, \partial_\alpha A_j] \partial^\alpha F  -  
\frac{\partial_\alpha\partial_i} {\Box \Delta} [A_j, \partial_i A_j] \partial^\alpha F
-  \frac{\partial_0\partial_\alpha} {\Box \Delta} [A_j, \partial^\alpha A_j] \partial_0 F 
\end{split}
\]
The estimate for this term is exactly the trilinear bound in \cite{MKG}, see (136) - (138) in Theorem 12.1 there. 

\bigskip

b) This is similar to the  proof in part (a), with two differences:

i) There is an additional gain in the  low frequency input, which eliminates any need to 
control $\ell^1$ norms.

ii) There is a small additional loss in high-high interactions in $\Box
A_x$ and $\Delta A_0$. However, this is harmless as in all cases we have a small
high frequency gain (including, notably, the trilinear case).

\end{proof}

\bibliography{wm}
\bibliographystyle{plain}

\end{document}